\newtheorem{definition}{Definition}
\newtheorem{state}{Proposition}
\newtheorem{conjecture}{Conjecture}
\newtheorem{cor}{Corollary}
\newtheorem{lem}{Lemma}
\DeclarePairedDelimiter\ceil{\lceil}{\rceil}
\DeclarePairedDelimiter\floor{\lfloor}{\rfloor}
\newcommand{\cl}[1]{\ceil*{\frac{#1}{2}}}
\newcommand{\fl}[1]{\floor*{\frac{#1}{2}}}
\newcommand{\qnum}[1]{\begin{bmatrix}
#1
\end{bmatrix}_q
}
\newcommand{\qbinom}[2]{\begin{bmatrix}
#1 \\
#2
\end{bmatrix}_q
}
\newcommand{\qbinomm}[2]{\begin{bmatrix}
#1 \\
#2
\end{bmatrix}_{q^2}
}
\title[$q$-super Catalan numbers]%
{$q$-Super Catalan Numbers: Combinatorial identities, Generating Functions,
and Narayana Refinements}
\begin{document}

\author{Arthur Rodelet--Causse}
\address[Arthur Rodelet--Causse]{LIGM, Universit\'e Gustave-Eiffel, CNRS,
ENPC, ESIEE-Paris \\
5 Boulevard Descartes \\Champs-sur-Marne \\77454 Marne-la-Vall\'ee cedex 2 \\
FRANCE}
\email{arthur.rodelet-causse@univ-eiffel.fr}

\author{Lenny Tevlin}
\address[Lenny Tevlin]{No longer affiliated with an academic institution}
\email{ltevlin@gmail.com}

\maketitle
\tableofcontents

\section{Introduction}

The super Catalan numbers
\begin{equation}
\label{def:sC}
T_{n,m}= \frac{(2m)!(2n)!}{(m)!(n)!(m +n)!}. \\
\end{equation}
are a two-parameter generalization of the central binomial numbers (at $m=0$)
and twice the usual Catalan numbers (at $m=1$).
They were first introduced by Catalan himself~\cite{Catalan} and studied by
several authors at the beginning of the 20th century.
The interest in them was revived much later by Ira Gessel in \cite{Gessel92},
who pinned the name super-Catalan and pointed out that the $T_{n,2}$ are
(twice) superballot numbers. While there have been numerous attempts at
finding a combinatorial (or otherwise) interpretation of all super Catalan
numbers, see \emph{e.g.}, \cite{Gessel}, \cite{Scha}, \cite{Chen-Wang},
\cite{Allen2014}, \cite{Limanta}, the question is still open and we do not
address it here.

Our object of interest is their natural $q$-analogs, the $q$-super
Catalan numbers defined by Warnaar and Zudilin in~\cite{Warnaar2011}:
\begin{equation}
T_{n,m}(q) = \frac{\qnum{2m}! \qnum{2n}!}{\qnum{m}! \qnum{n}! \qnum{m +n}!},
\end{equation}
where
\begin{equation}
\qnum{k}! = \prod_{i=1}^{k} \qnum{i}
\text{\qquad and\qquad}
\qnum{i} = \frac{1-q^i}{1-q}.
\end{equation}

In particular, the family includes the usual $q$-central binomial coefficients
and the $q$-Catalan numbers (up to a factor $(1+q)$):
\begin{equation}
T_{n,0}(q) = \qbinom{2n}{n} \qquad\text{and}\qquad
T_{n,1}(q) = (1+q)C_n(q),
\end{equation}
where
\begin{align}
& \qbinom{n}{k}  = \frac{\qnum{n}!}{\qnum{k}! \qnum{n-k}!},  \\ 
& C_n(q) = \frac1{\qnum{n+1}}\qbinom{2n}{n}
\end{align}

Since the $q$-super Catalan numbers are symmetric with respect to the
interchange of $n$ and $m$, we will assume that $n \geq m$ in the paper unless
specified otherwise.

Little is known in general about those numbers except they are
positive polynomials in $q$, see~\cite{Warnaar2011} and that they can be
decomposed into sums of generalizations of Narayana numbers
(see~\cite{Wachs}).
Our goal is to emphasize the commonality in this family of $q$-analogs. It is
usually believed that among all possible $q$-analogs of ordinary Catalan
numbers, a 'combinatorial' one would enjoy a $q$-version of Segner relation,
while the classical analog of ordinary Catalan numbers, $C_n(q)$, does not. We
will provide such an analog in Section~\ref{sec:convo} as well as the
resulting generating function for (scaled) $C_n(q)$ in Section~\ref{sec-genf}.

\subsection*{Outline of the paper}

After recalling the usual notations on $q$-analogs and $q$-series,
we begin by deriving a number of combinatorial identities satisfied by the
$q$-super Catalan numbers. In particular, we extend some of the known
combinatorial identities (Touchard, Koshy, Reed Dawson) to the $q$-super
Catalan numbers.

Next, we introduce some $q$-convolution identities involving q-central
binomial and q-Catalan numbers and derive a generating function for
$q$-Catalan numbers.
Then we introduce Narayana-type refinements of the super Catalan numbers.
We prove algebraically the $\gamma$-positivity of those refinements and give a
combinatorial proof in a special case through the type B analog of noncrossing
partitions.
Then we introduce their natural $q$-analogs, prove their
$q$-$\gamma$-positivity and prove some identities they satisfy, generalizing
identities of Kreweras~\cite{Krew} and Le Jen-Shoo~\cite{Gould}.
Using yet another identity, we prove that these refinements are
positive integer polynomials in $q$.

\subsection*{Usual notations on $q$-series}

We will use the following standard notations:

\begin{align}
& (a;q)_n = 
\begin{cases}
\prod_{k=0}^{n-1} (1 - a q^k) & \text{for\ } n \geq 1, \\
1 & \text{for\ } n = 0,
\end{cases} \\
&  {}_2 \phi_1(a,b;c;q;z)
    = \sum_{k =0}^{\infty} \frac{(a,q)_k(b;q)_k}{(q;q)_k(c;q)_k} z^k .
\end{align}
With these notations, $q$-binomial coefficients and $q$-super Catalan numbers
can be written as
\begin{align}
\label{eq:binom}
& \qbinom{n}{k}  = \frac{(q^{n-k+1};q)_k}{(q;q)_k},  \\ 
\label{eq:super}
& T_{n,m}(q) = \frac{( q ; q )_{2 m} ( q ; q )_{2 n}}{ ( q ; q )_m ( q ; q )_n
( q ; q )_{n + m}}.
\end{align}
Furthermore, we will use the following $q$-series identities in the proofs:
\begin{align}
\label{eq:Kone}
& ( q ; q )_n = ( q ; q )_m ( q^{m +1} ; q )_{n-m} \text{\ if $n\geq m\geq0$}\\
\label{eq:Ktwo}
 & ( q^2 ; q^2 )_n = ( -q ; q )_n ( q ; q )_n \\
 \label{eq:Kthree}
 & ( q ; q )_n = ( q ; q^2 )_{\left\lceil \frac{n}{2} \right\rceil} ( q^2 ; q^2 )_{\fl{n}} \\
 \label{eq:Kfour}
 & ( q^a ; q^b )_n = (-1)^n q^{an + b\frac{n(n-1)}{2}} (q^{-a-b(n-1)};q^b)_n
\end{align}

\section{$q$-super Catalan Identities}

Among all identities involving $q$-Catalan numbers and some identities
involving $q$-super Catalan numbers, we decided to focus first on the
Touchard identity involving Catalan numbers since it was generalized in both
directions respectively by Andrews~\cite{Andrews2010} and both
Gould~\cite{Gould} and Gessel-Xin~\cite{Gessel}. We will provide a formula
that unifies all previously known formulas, showing the first commonality
between all $q$-super Catalan numbers.

We will also derive generalizations of other $q$-identities derived by Andrews
in two different papers, one generalizing an identity by Koshy, the other an
identity by Reed Dawson.

\subsection{A $q$-Touchard identity for the $q$-super Catalan numbers}
\label{sec:Touchard}

The original identity by Touchard involves Catalan numbers and is
\begin{equation}
C_{n+1} = \sum_{k = 0}^{\fl{n}} 2^{n-2k} \binom{n}{2k} C_{k}.
\end{equation}

It was generalized by Andrews in~\cite{Andrews2010} who proved that
\begin{equation}
C_{n+1}(q)
 = \sum_{k = 0}^{\fl{n}}
      q^{2k(k+1)} \qbinom{n}{2k} \frac{(-q^{k+2};q)_{n-k}}{(-q;q)_k} C_k(q),
\end{equation}
while for ordinary super Catalan numbers it is known~\cite{Gould},
\cite{Gessel} that
\begin{equation}
\label{id:initial}
T_{n+m,m} = \sum_{k=0}^{\fl{n}} 2^{n-2k} \binom{n}{2k} T_{k,m}.
\end{equation}

We shall prove a $q$-version of \eqref{id:initial} that extends the result
of Andrews to all $q$-super Catalan numbers, namely:
\begin{state}
\label{prop:q-Touchard}
Let $m$ and $n$ be nonnegative integers. We have the following formula
that is the $q$-analog of the generalization of the Touchard identity to all
super Catalan numbers\footnote{Here and below we mark in
$\textcolor{red}{red}$ the adjustments needed in the identities to accommodate
the 'super' parameter $m$.}:
\begin{equation}
\label{eq:q-Touchard}
T_{n+m,m}(q) =
  \sum_{k = 0}^{\fl{n}} q^{2k(k+\textcolor{red}{m})} \qbinom{n}{2k}
    \frac{(-q^{k+\textcolor{red}{m}+1};q)_{n-k}}{(-q;q)_k} T_{k,m}(q).
\end{equation}
\end{state}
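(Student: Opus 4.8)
The plan is to reduce the right-hand side to a single terminating basic hypergeometric series in base $q^{2}$ and evaluate it by the $q$-Gauss summation. First I would insert the product formula~\eqref{eq:super} for $T_{k,m}(q)$ together with $\qbinom{n}{2k}=\frac{(q;q)_n}{(q;q)_{2k}(q;q)_{n-2k}}$; the factor $(q;q)_{2k}$ cancels, and the quotient of half-open products telescopes (the $a=-q$ case of~\eqref{eq:Kone}) as
\begin{equation}
\frac{(-q^{k+m+1};q)_{n-k}}{(-q;q)_k}
 = \frac{(-q;q)_{n+m}}{(-q;q)_{k+m}\,(-q;q)_k}.
\end{equation}
Using~\eqref{eq:Ktwo} to replace each $(-q;q)_j$ by $(q^2;q^2)_j/(q;q)_j$ and cancelling, the whole right-hand side collapses to
\begin{equation}
\frac{(q;q)_{2m}}{(q;q)_m}\cdot\frac{(q^2;q^2)_{n+m}\,(q;q)_n}{(q;q)_{n+m}}
 \sum_{k=0}^{\fl{n}} t_k,
 \qquad
 t_k = \frac{q^{2k(k+m)}}{(q^2;q^2)_{k+m}\,(q^2;q^2)_k\,(q;q)_{n-2k}}.
\end{equation}

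Next I would identify $\sum_k t_k$ as a $q^{2}$-hypergeometric series. Computing the term ratio, and using the reflection~\eqref{eq:Kfour} (equivalently the split $(q^{-n};q^2)_k(q^{1-n};q^2)_k=(q^{-n};q)_{2k}$) to turn the descending factor $(q;q)_{n-2k}$ into ascending $q^{2}$-shifted factorials, gives
\begin{equation}
\frac{t_{k+1}}{t_k}
 = q^{2n+2m+1}\,
   \frac{(1-q^{-n}q^{2k})(1-q^{1-n}q^{2k})}{(1-q^{2k+2})(1-q^{2m+2}q^{2k})},
\end{equation}
so that, with $Q=q^{2}$,
\begin{equation}
\sum_{k=0}^{\fl{n}} t_k
 = \frac{1}{(q^2;q^2)_m\,(q;q)_n}\;
   {}_2\phi_1\!\big(q^{-n},q^{1-n};q^{2m+2};q^2;\,q^{2n+2m+1}\big).
\end{equation}
The series terminates because $q^{-n}$ (for $n$ even) or $q^{1-n}$ (for $n$ odd) is a nonpositive power of $Q=q^{2}$, cutting the tail off exactly where $(q;q)_{n-2k}$ already forces vanishing.

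The key point is that the argument is precisely the $q$-Gauss value $c/(ab)=q^{2m+2}/(q^{-n}q^{1-n})=q^{2n+2m+1}$. I would therefore invoke the terminating $q$-Gauss summation (the $q$-Chu--Vandermonde identity), which produces a finite product in base $q^{2}$; for $n$ even this reads
\begin{equation}
{}_2\phi_1\!\big(q^{-n},q^{1-n};q^{2m+2};q^2;q^{2n+2m+1}\big)
 = \frac{(q^{n+2m+1};q^2)_{n/2}}{(q^{2m+2};q^2)_{n/2}},
\end{equation}
with the analogous formula (indexed by $\cl{n}$ and using $q^{1-n}$ as the terminating parameter) when $n$ is odd.

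Finally I would simplify the $q^{2}$-products back into ordinary $(q;q)$-factorials, using~\eqref{eq:Kthree} to split the symbols according to the parity of $n$ and~\eqref{eq:Kone} to recombine index ranges, and check that everything collapses to
\begin{equation}
\frac{(q;q)_{2m}\,(q;q)_{2n+2m}}{(q;q)_m\,(q;q)_{n+m}\,(q;q)_{n+2m}} = T_{n+m,m}(q),
\end{equation}
which is the left-hand side. The main obstacle is exactly this last bookkeeping: since both the terminating parameter and the ceiling/floor splits in~\eqref{eq:Kthree} depend on the parity of $n$, the cases $n$ even and $n$ odd must be carried separately, and the reflection~\eqref{eq:Kfour} has to be tracked carefully so that the base-$q^{2}$ product delivered by $q$-Gauss matches the mixed base-$q$/base-$q^{2}$ target. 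Once the parity analysis is organized, the identity follows in both cases; specializing $m=1$ recovers the theorem of Andrews.
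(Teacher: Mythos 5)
Your proposal is correct and takes essentially the same route as the paper's proof: both collapse the right-hand side to the terminating series ${}_2\phi_1\left(q^{-n},q^{1-n};q^{2m+2};q^2;q^{2n+2m+1}\right)$ (the paper writes the numerator parameters as $q^{-2\fl{n}}$ and $q^{-2\cl{n}+1}$, which is the same pair) and then evaluate it by the $q$-Chu--Vandermonde summation \eqref{eq:Gauss1} before simplifying the resulting product back to $T_{n+m,m}(q)$. The only real difference is that the paper's floor/ceiling bookkeeping handles both parities of $n$ uniformly, so the separate even/odd case analysis you flag as the ``main obstacle'' can be avoided entirely.
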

Our proof of this identity closely follows the logic of the original proof in
\cite{Andrews2010} and depends on Gauss summation formula
($q$-Chu-Vandermonde) for $q$-generalized hypergeometric functions, which we
record here for reference (II.7), \cite{GR}:
\begin{equation}
\label{eq:Gauss1} 
 {}_2 \phi_1\left(a,q^{-N};c;q;\frac{c q^N}{a}\right)
 = \frac{(\frac{c}{a};q)_N}{(c;q)_N} , 
\end{equation}
where $N$ is a nonnegative integer.

\begin{proof}
Consider the following sum:
\begin{equation}
\label{eq:step-touch}
 \sum_{k = 0}^{\fl{n}}
   \frac{q^{2k(k+m)}}{(-q;q)_k(-q^{m+1};q)_{k}} \qbinom{n}{2k} T_{k,m}(q).
\end{equation}
We first write out a generic term in the sum using \eqref{eq:binom} and
\eqref{eq:super}, cancel common factors, and then rewrite it using
~\eqref{eq:Kone} and~\eqref{eq:Ktwo}:
\begin{align*}
& \frac1{(-q;q)_k(-q^{m+1};q)_{k}}  \qbinom{n}{2k}  T_{k,m}(q)\\
& = \frac{(q;q)_{2m}}{(q;q)_m}  \frac{(q^{n - 2k +1};q)_{2k}}{(-q;q)_k (q;q)_k(q;q)_m (q^{m+1};q)_k (-q^{m+1};q)_k} \\
& =  \frac{(q;q)_{2m}}{(q;q)_m^2 }  \frac{(q^{n - 2k+1};q)_{2k}}{ (q^2;q^2)_k
(q^{2m+2};q^2)_{k}} .
\end{align*}

Now, split the numerator into even and odd terms followed by the reversal of powers using~\eqref{eq:Kfour}:
\begin{align*}
& (q^{n - 2k +1};q)_{2k} =  (q^{2\fl{n} - 2k+2};q^2)_k (q^{2\cl{n} - 2k +1};q^2)_k  \\
&  (q^{2\fl{n} - 2k+2};q^2)_k = (-1)^k q^{k(2\fl{n} - 2k+2) +k(k-1)} (q^{-2\fl{n}};q^2)_k \\
 & (q^{2\cl{n} - 2k+1};q^2)_k =  (-1)^k q^{k(2\cl{n} - 2k+1) +k(k-1)} (q^{-2\cl{n}+1};q^2)_k \quad \text{resulting in} \\
 &  (q^{n - 2k +1};q)_{2k} = q^{k(2n - 2k +1)}  (q^{-2\fl{n}};q^2)_k (q^{-2\cl{n}+1};q^2)_k,
\end{align*}
where we have used the fact that 
\begin{equation}
\label{eq:fl-cl}
\cl{n} + \fl{n} = n.
\end{equation}
Now, apart from the summation-index independent term
$\frac{(q;q)_{2m}}{(q;q)_m^2}$, sum \eqref{eq:step-touch} gets transformed into
\begin{equation}
\sum_{k = 0}^{\fl{n}} q^{(2n+2m+1)k} \frac{(q^{-2\fl{n}};q^2)_k (q^{-2\cl{n}+1};q^2)_k}{ (q^2;q^2)_k (q^{2m+2};q^2)_{k}} = {}_2 \phi_1 \left(q^{-2\cl{n}+1},q^{-2\fl{n}};q^{2m+2};q^2;q^{2n+2m+1} \right)
\end{equation}
We can now use Identity \eqref{eq:Gauss1} with 
\[
N=\fl{n}, \quad a= q^{-2\cl{n}+1}, \quad c = q^{2m+2}
\]
as, using \eqref{eq:fl-cl}, one can see that
\[
\frac{c q^N}{a} = q^{2m + 2n +1}.
\]
So \eqref{eq:step-touch} becomes
\begin{equation}
\label{eq:step2-touch}
 \sum_{k=0}^{\fl{n}}  \frac{q^{2k(k+m)}}{(-q;q)_k(-q^{m+1};q)_{k}}
\qbinom{n}{2k}  T_{k,m}(q) = \frac{(q;q)_{2m}}{(q;q)_m^2}  \frac{(q^{2m + 2
\cl{n} +1};q^2)_{\fl{n}}}{(q^{2m+2};q^2)_{\fl{n}}}.
 \end{equation}
Next, we simplify the second ratio on the right-hand side:
 \begin{align}
 \label{eq:T1}
 & \frac{(q^{2m + 2 \cl{n} +1};q^2)_{\fl{n}}}{(q^{2m+2};q^2)_{\fl{n}}} = \frac{(q;q^2)_{n+m}}{(q;q^2)_{m+ \cl{n}}(q^{2m+2};q^2)_{\fl{n}}} \\
 \label{eq:T2}
 & = \frac{(q;q^2)_{n+m} (q^2;q^2)_{m + \fl{n}}}{(q^{2m+2};q^2)_{\fl{n}} (q;q^2)_{n+2m}} = \frac{(q;q^2)_{n+m} (q^2;q^2)_m}{(q;q)_{n+2m}} \\
 \label{eq:T3}
 & = \frac{(q;q)_{2(n+m)} (q^2;q^2)_m}{(q^2;q^2)_{n+m} (q;q)_{n+2m}} = \frac{(-q;q)_m (q;q)_m (q;q)_{2(n+m)}}{(-q;q)_{n+m}(q;q)_{n+m} (q;q)_{n+2m}} =
 \frac{(q;q)_m (q;q)_{2(n+m)}}{(-q^{m+1};q)_n (q;q)_{n+m} (q;q)_{n+2m}}
 \end{align}
 where 
\begin{itemize}
\item in \eqref{eq:T1}, we have used \eqref{eq:Kone} along with
\eqref{eq:fl-cl} to rewrite the numerator;
\item in \eqref{eq:T2}, we have used \eqref{eq:Kthree} to rewrite
$(q;q^2)_{m+ \cl{n}}$ and then \eqref{eq:Kone} to simplify the fraction;
\item in \eqref{eq:T3}, we have used \eqref{eq:Kthree} again to rewrite
$(q;q^2)_{n+m} $ and, finally, \eqref{eq:Ktwo} and \eqref{eq:Kone} to obtain
the factor $(-q^{m+1};q)_n$.
\end{itemize}

Returning to Equation \eqref{eq:step2-touch}, we have:
\begin{equation}
 \sum_{k=0}^{\fl{n}}  \frac{q^{2k(k+m)}}{(-q;q)_k(-q^{m+1};q)_{k}}  \qbinom{n}{2k}  T_{k,m}(q) = \frac{(q;q)_{2m}}{(q;q)_m^2}  \frac{(q;q)_m (q;q)_{2(n+m)}}{(-q^{m+1};q)_n (q;q)_{n+m} (q;q)_{n+2m}}
 \end{equation}
 or, cancelling common factors and bringing $ (-q^{m+1};q)_n $ to the left-hand side:
 \begin{equation}
  \sum_{k=0}^{\fl{n}} q^{2k(k+m)} \frac{(-q^{m+1};q)_{n-k}}{(-q;q)_k}  \qbinom{n}{2k}  T_{k,m}(q) = \frac{(q;q)_{2m}}{(q;q)_m}  \frac{ (q;q)_{2(n+m)}}{ (q;q)_{n+m} (q;q)_{n+2m}}
 \end{equation}
which is  (\ref{eq:q-Touchard}).
\end{proof}

Despite a similarity, Formula (\ref{eq:q-Touchard}) is different from the
one presented in \cite{Warnaar2011}:
\begin{equation}
T_{m,m+n}(q) = \sum_{k=0}^{\fl{n}} T_{m,k}(q) \sum_{j=k}^{n-k} q^{k(m+k) +j
(m+j)} \qbinom{n}{2k} \qbinom{n-2k}{j-k}.
\end{equation}

\subsection{A $q$-Koshy identity for the $q$-super Catalan Numbers}
\label{sec:Koshy}

The second result in \cite{Andrews2010} is a $q$-Koshy identity:
\begin{equation}
C_n(q)
= \sum_{r=1}^{[\frac{n+1}{2}]} (-1)^{r-1} q^{r(r-1)}
     \qbinom{n-r+1}{r} C_{n-r}(q) \frac{(-q^{n-r+1};q)_r}{(-q;q)_r}.
\end{equation}

Let us prove a generalization of this identity to all $q$-super Catalan
numbers:
\begin{state}
Let $n$ and $m$ be nonnegative integers with $m \leq n$. We have
\begin{equation}
\label{eq:Koshy}
T_{n,m}(q) =
 \sum_{r=1}^{\fl{n+\textcolor{red}{m}}}
     (-1)^{r-1} q^{r(r- \textcolor{red}{2m}+1)}
     \qbinom{n-r+\textcolor{red}{m}}{r} T_{n-r,m}(q)
     \frac{(-q^{n-r+1};q)_r}{(-q;q)_r},
\end{equation}
if $m>0$.

When $m=0$ the identity is
\begin{equation}
\label{eq:Koshy0}
\qbinom{2n}{n} =   \sum_{r=1}^{\fl{n}}
     (-1)^{r-1} q^{r(r+1)}
     \qbinom{n-r}{r} \qbinom{2n -2r}{n-r}
     \frac{(-q^{n-r+1};q)_r}{(-q;q)_r} + (-q;q)_n.
     \end{equation}
\end{state}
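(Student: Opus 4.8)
The plan is to reproduce the reduction strategy used for Proposition~\ref{prop:q-Touchard}: insert the explicit products \eqref{eq:binom} and \eqref{eq:super} into the general summand on the right of \eqref{eq:Koshy}, cancel everything common to it, and recast the remainder as a ratio of $q^2$-Pochhammer symbols by means of \eqref{eq:Kone}--\eqref{eq:Ktwo}. Concretely, I would strip the sign and the prefactor $q^{r(r-2m+1)}$, write the $r$-th term as $(-1)^{r-1}a_r$, and compute the term ratio. A short cancellation---the factor $(1-q^{n-r+m})$ supplied by $T_{n-r,m}(q)$ against the one coming from $\qbinom{n-r+m}{r}$, and $(1-q^{n-r})(1+q^{n-r})=1-q^{2(n-r)}$ against the $T$-ratio---collapses it to
\[
\frac{a_{r+1}}{a_r}=q^{2r+2-2m}\,\frac{(1-q^{n+m-2r})(1-q^{n+m-1-2r})}{(1-q^{2(r+1)})(1-q^{2n-1-2r})}.
\]
This displays two numerator factors against a single genuine lower factor besides the normalizing $(q^2;q^2)_r$, so that, once the order of summation is reversed, the sum takes the shape of a terminating ${}_2\phi_1$ in base $q^2$.

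Next I would turn the decreasing factors $q^{n+m-2r}$ and $q^{2n-1-2r}$ into standard terminating parameters by reversing the summation order and applying the reflection \eqref{eq:Kfour}, exactly as powers were reversed in the proof of Proposition~\ref{prop:q-Touchard}; this produces an upper parameter of the form $q^{-2N}$ with $N=\fl{n+m}$ and brings the sum within reach of the $q$-Chu--Vandermonde (Gauss) summation \eqref{eq:Gauss1} or its companion form. Evaluating the sum and then repackaging the resulting ratio of Pochhammer symbols with \eqref{eq:Kone}, \eqref{eq:Kthree} and \eqref{eq:Ktwo}---in the spirit of \eqref{eq:T1}--\eqref{eq:T3}---should return precisely $T_{n,m}(q)$, which is \eqref{eq:Koshy}.

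The step I expect to be the genuine obstacle is the boundary value $m=0$. Written with the summation started at $r=0$, the alternating sum vanishes identically when $m>0$ (its $r=0$ term being exactly $-T_{n,m}(q)$), and this vanishing is what \eqref{eq:Koshy} records after that term is separated. At $m=0$, however, the $m$-dependent parameter of the hypergeometric evaluation degenerates, so \eqref{eq:Gauss1} no longer applies verbatim: a contribution that is annihilated for every $m>0$ now survives and supplies the additive correction $(-q;q)_n$ of \eqref{eq:Koshy0}. I would therefore treat $m=0$ separately, either by taking the limit $m\to0$ in the pre-simplified summation and isolating the surviving boundary term, or by evaluating the $m=0$ sum directly from the $q$-binomial theorem. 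Extracting this correction cleanly, rather than the routine Pochhammer bookkeeping, is where the real care is required.
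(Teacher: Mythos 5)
Your proposal is correct and follows essentially the same route as the paper: include the $r=0$ term $-T_{n,m}(q)$ so that the $m>0$ identity becomes the vanishing of the full alternating sum, reduce the generic term to a terminating ${}_2\phi_1$ in base $q^2$ with numerator parameters $q^{-(n+m)},\,q^{-(n+m)+1}$ and denominator parameter $q^{-(2n-1)}$ (your term ratio agrees exactly with the paper's \eqref{eq:Koshy-final1}), and evaluate by $q$-Chu--Vandermonde. Two clarifications are worth making. First, because the argument of the resulting ${}_2\phi_1$ equals the base $q^2$, the applicable summation is the companion form \eqref{eq:Gauss2}, not \eqref{eq:Gauss1} (whose argument $cq^N/a$ would equal $q^2$ only for $m=1$); your hedge ``or its companion form'' is the operative clause. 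Second, nothing degenerates at $m=0$: \eqref{eq:Gauss2} applies verbatim for every $m\ge 0$ and produces, up to nonvanishing factors, the finite product $\left(q^{-2\fl{n-m}};q^2\right)_{N}$ with $N=\fl{n+m}$; this product contains a zero factor precisely when $\fl{n-m}\le N-1$, i.e.\ when $m>0$, whereas at $m=0$ the would-be zero index $\fl{n}$ equals $N$ and falls just outside the product, so the evaluation is nonzero and a short Pochhammer simplification turns it into the additive term $(-q;q)_n$ of \eqref{eq:Koshy0}. Thus the $m=0$ case requires no limiting argument in $m$ and no separate appeal to the $q$-binomial theorem --- only the direct simplification of that one surviving product, which is exactly how the paper concludes.
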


\begin{proof}
We will again be using the Gauss summation formulas 
($q$-Chu-Vandermonde) in the following version (II.6), \cite{GR}:
\begin{equation}
\label{eq:Gauss2}
{}_2 \phi_1(a,q^{-N};c;q;q) = \frac{(\frac{c}{a};q)_N}{(c;q)_N} a^N.
\end{equation}

Following the logic of the Andrews's proof, we move all terms of the case
$m>0$ to one side and consider
\begin{align}
& 
  \sum_{r=0}^{\fl{n+m}}
     (-1)^{r-1} q^{r(r- 2m+1)} \qbinom{n-r+m}{r} T_{n-r,m}(q)
     \frac{(-q^{n-r+1};q)_r}{(-q;q)_r} \\
& =
    \frac{\qnum{2m}!}{\qnum{m}!} \sum_{r=0}^{\fl{n+m}}
      (-1)^{r-1} q^{r(r- 2m+1)}  \frac{(-q^{n-r+1};q)_r}{(-q;q)_r}
      \frac1{\qnum{r}! \qnum{n - 2r +m}!}
      \frac{\qnum{2(n-r)}!}{\qnum{n-r}! } \\
\label{eq:Koshy1}
& =
   \frac{\qnum{2m}!}{\qnum{m}!} \sum_{r=0}^{\fl{n+m}}
      (-1)^{r-1} q^{r(r- 2m+1)}
      \frac{(-q^{n-r+1};q)_r (q;q)_{2(n-r)}}{(-q;q)_r (q;q)_r (q;q)_{n-2r+m} (q;q)_{n-r}} .
\end{align}
Now rewrite each term of the last sum as 
\begin{align}
\label{eq:Koshy-step1}
&  \frac{(-q^{n-r+1};q)_r (q;q)_{2(n-r)}}{(q^2;q^2)_r (q;q)_{n-2r+m} (q;q)_{n-r}}  =  \frac{(-q^{n-r+1};q)_r (q^{n-r+1};q)_r (q^{n-2r+m+1};q)_{n-m}}{(q^2;q^2)_r (q;q)_{n-r}(q^{n-r+1};q)_r} \\
\label{eq:Koshy-step2}
& = \frac{ (q^{2n-2r+2};q^2)_r (q^{n-2r+m+1};q)_{n-m}}{(q^2;q^2)_r (q;q)_{n}} = \frac1{(q;q)_n} \frac{ (q^{2n-2r+2};q^2)_r (q^{n-2r+m+1};q)_{n-m}(q^{n-2r+1};q^2)_r}{(q^2;q^2)_r (q^{n-2r+1};q^2)_r} \\
\label{eq:Koshy-step3}
& = \frac{ (q^{n+m+1};q)_{n-m}}{(q;q)_n}
\frac{(q^{n-2r+m+1};q)_{2r}}{(q^2;q^2)_r (q^{n-2r+1};q^2)_r} = (-1)^r q^{r(2m
- r + 1)} \frac{ (q^{n+m+1};q)_{n-m}}{(q;q)_n}  \frac{(q^{-(n+m)+1};q^2)_r
(q^{-(n+m)};q^2)_r}{(q^2;q^2)_r (q^{-(2n-1)};q^2)_r},
 \end{align}
where
\begin{itemize}
\item in \eqref{eq:Koshy-step1}, we multiplied both the numerator and the
denominator by $( q^{ n- r +1} ; q )_r $.  We also applied \eqref{eq:Kone} to
simplify
\[
      \frac{( q ; q )_{2( n- r )}} {( q ; q )_{n -2 r + m}} = ( q^{n - 2 r + m +1} ; q )_{m -n} 
 \]
\item in \eqref{eq:Koshy-step2}, in the numerator we have created even powers
of $q$ running from $q^{2n - 2r +2}$ up to $q^{2n-2}$ by combining the factor
$( q^{n -r +1} ; q )_r $ with  $( -q^{n - r +1} ; q )_r $  and using
\eqref{eq:Ktwo}.
We have also created matching odd powers by multiplying both the numerator and
denominator by $(q^{n-2r+1};q^2)_r$ as well as combining the factor $( q^{n -
r +1} ; q )_r$ and the factor $( q ; q )_{n -r} $ in the denominator to give
the factor $( q ; q )_n$  by using \eqref{eq:Kone}.
\item in \eqref{eq:Koshy-step3}, we have first split the product in the
numerator in two and then, in the second equality split the product in the
numerator into odd and even powers. Finally we have reversed the order of
multiplication both in the numerator and denominator by using \eqref{eq:Kfour}.
\end{itemize}

Ignoring the overall factor $ \frac{( q^{m +1} ; q )_m (q^{n+m +1};q)_{n-m}}{
(q;q)_n}$, the sum on the right-hand side of (\ref{eq:Koshy1}) gets
transformed into
\begin{equation}
\label{eq:Koshy-final1}
\sum_{r \geq 0} q^{2r} \frac{(q^{-(n+m) +1};q^2)_r(q^{-(n+m )};q^2)_r
}{(q^2;q^2)_r (q^{-(2n-1)};q^2)_r} =  {}_2 \phi_1(a,b,q^{-(2 n-1)}, q^2;q^{2})
\end{equation}
with  $\{ a; b \} = \{ q^{-(n+m) }; q^{-(n + m) +1} \}$. The finiteness of the
sum in the $ {}_2 \phi_1 $ follows from the fact that when $r > \fl{n + m}$ ,
either $( q^a ; q^2 )_r $ or $( q^b ; q^2 )_r $ vanishes.\\

Now we apply Equation~\eqref{eq:Gauss2} with $N=\fl{m+n}$ and the right-hand
side in~\eqref{eq:Koshy-final1} becomes
\begin{equation}
{}_2 \phi_1( a, b, q^{-(2 n-1)}, q^2 ; q^{2}) =
\frac{(q^{-2\cl{n-m}};q^2)_N}{(q^{-(2n-1)};q^2)_N} q^{(1-n-m)N}.
\end{equation}

Finally, if $m>0$, the term $(q^{-2\cl{n-m}};q^2)_N$  is zero, hence
proving~\eqref{eq:Koshy}. \\
If $m=0$, the right-hand side simplifies into $-(q;q)_n$ after a simple
expansion, hence proving~\eqref{eq:Koshy0}.
\end{proof}

\subsection{A $q$-Reed Dawson identity for $q$-super Catalan numbers}
\label{sec-reed_dawson}

Andrews in~\cite{Andrews1974}, Theorem 5.4, establishes a $q$-analog of a Reed
Dawson identity for central binomial coefficients, the $T_{n,0}(q)$ in our
notation:
\begin{equation}
 \sum_{k=0}^n
    (-1)^k  q^{\frac{k(k-1)}{2}} \frac{1}{(-q;q)_{n-k}}\qbinom{n}{k}
    \qbinom{2n-2k}{n-k}
= 
\begin{cases}
\frac{q^{\frac{n^2}{2}}}{(-q;q)_{n/2}^2} \qbinom{n}{n/2},
    & \text{if $n$ is even,} \\
 0, & \text{if $n$ is odd.}
\end{cases} 
\end{equation}
We generalize it to all $q$-super Catalan numbers:
\begin{state}
\label{eq:qRD}
Let $m$ and $n$ be nonnegative integers. We have the following identity that
generalizes the $q$-Reed Dawson identity to all super Catalan numbers:
\begin{align}
& \sum_{k=0}^{n-\textcolor{red}{m}}  (-1)^k  q^{\frac{k(k-1)}{2}}
     \frac{1}{(-q;q)_{n-k}}\qbinom{n-\textcolor{red}{m}}{k} T_{n-k, m}(q) \\
&= \begin{cases}
  \frac{q^{(n^2 -\textcolor{red}{m^2})/2} }{(-q;q)_{(n-\textcolor{red}{m})/2} (-q;q)_{(n+\textcolor{red}{m})/2} }
  \qbinom{\textcolor{red}{2m}}{\textcolor{red}{m}}
  \qbinom{n}{\textcolor{red}{m}}^{-1}
  \qbinom{n}{(n-\textcolor{red}{m})/2}, & \text{if $ n -m $ is even,} \\
0, & \text{otherwise.}
\end{cases}
\end{align}
\end{state}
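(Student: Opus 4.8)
The plan is to follow the same template as the proofs of the $q$-Touchard and $q$-Koshy identities: expand a generic summand with \eqref{eq:binom} and \eqref{eq:super}, reduce the $k$-dependence to a terminating basic hypergeometric series by means of \eqref{eq:Kone}--\eqref{eq:Kfour}, evaluate it with one of the $q$-Chu--Vandermonde summations \eqref{eq:Gauss1}--\eqref{eq:Gauss2}, and simplify the output back to a product of $q$-binomials. The initial cleanup is especially favorable here. Writing $T_{n-k,m}(q)$ out and splitting $(q;q)_{2(n-k)}$ by \eqref{eq:Kthree} produces a factor $(-q;q)_{n-k}$ that cancels exactly against the $1/(-q;q)_{n-k}$ already present (this is the structural role of the $(-q;q)_{n-k}$ in the identity), while \eqref{eq:Ktwo} turns the surviving $(q;q)_{n-k}$ into $(q^2;q^2)_{n-k}$; using \eqref{eq:Kone} to merge $(q;q)_{2(n-k)}$ with the two denominators $(q;q)_{n-m-k}$ and $(q;q)_{n+m-k}$ collapses them into a single central $q$-binomial. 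After pulling out the $k$-independent factor $\frac{(q;q)_{n-m}(q;q)_{2m}}{(q;q)_m}$, the summand takes the clean shape
\begin{equation*}
(-1)^k q^{k(k-1)/2}\,\frac{1}{(q;q)_k\,(q^2;q^2)_{n-k}}\,\qbinom{2(n-k)}{n-m-k}.
\end{equation*}

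Next I would convert the remaining $k$-dependence to base $q^2$. Using \eqref{eq:Kone} to refer $(q^2;q^2)_{n-k}$ and the central $q$-binomial back to their values at $k=0$, and then reversing the resulting finite products with \eqref{eq:Kfour}, the explicit prefactor $(-1)^k q^{k(k-1)/2}$ is absorbed and the sum becomes a terminating ${}_2\phi_1$ in the variable $q^{2}$ (up to a $k$-independent prefactor), whose terminating parameter is $q^{-2N}$ with $N=\fl{n-m}$ or $N=\fl{n+m}$. I would then apply the appropriate $q$-Chu--Vandermonde evaluation, \eqref{eq:Gauss2} being the natural candidate since the argument should reduce to a pure power of $q$. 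As in the two previous proofs the half-integer indices $\cl{\cdot}$ and $\fl{\cdot}$ appear and must be reconciled with \eqref{eq:fl-cl}; in fact the terminating-parameter analysis is what naturally separates the two parities of $n-m$.

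The parity dichotomy is forced by the odd-power factor $(q;q^2)_{n-k}$ hidden inside $(q;q)_{2(n-k)}$, that is, the half of $(q;q)_{2(n-k)}$ not captured by $(q^2;q^2)_{n-k}$: after the $q$-Chu--Vandermonde summation the closed form carries a base-$q^2$ $q$-Pochhammer symbol in the numerator which is a genuine nonzero product when $n-m$ is even, but acquires a vanishing factor, and is therefore $0$, when $n-m$ is odd. In the even case I would finish by rewriting that ratio, once more with \eqref{eq:Kone}--\eqref{eq:Kthree} and \eqref{eq:Ktwo}, as the stated product $\frac{q^{(n^2-m^2)/2}}{(-q;q)_{(n-m)/2}(-q;q)_{(n+m)/2}}\qbinom{2m}{m}\qbinom{n}{m}^{-1}\qbinom{n}{(n-m)/2}$, matching in particular the power $q^{(n^2-m^2)/2}=q^{(n-m)(n+m)/2}$ against the exponents generated by the reversals.

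The main obstacle is the bookkeeping of signs and $q$-powers through the reversals of \eqref{eq:Kfour}, together with the parity split: unlike the $q$-Touchard case, $(q;q)_{2(n-k)}$ does not reduce to a uniformly clean base-$q^2$ form, so the odd part $(q;q^2)_{n-k}$ must be tracked explicitly, and it is precisely this part that distinguishes $n-m$ even from $n-m$ odd. Verifying that the surviving base-$q^2$ Pochhammer in the numerator vanishes exactly in the odd case, and that in the even case its simplification reproduces both the prefactor $q^{(n^2-m^2)/2}$ and the three $q$-binomials with the correct $\cl{\cdot}/\fl{\cdot}$ indices, is where the computation is most delicate; I would check a couple of small instances (for example $m=1$ with $n=2$ and $n=3$) first to fix the normalization and sign conventions before committing to the general manipulation.
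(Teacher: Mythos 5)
Your initial cleanup is correct and matches the paper's: cancelling $(-q;q)_{n-k}$ against the even part of $(q;q)_{2(n-k)}$ (via \eqref{eq:Kthree} and \eqref{eq:Ktwo}) and merging the remaining denominators does produce, up to the $k$-independent factor $\frac{(q;q)_{n-m}(q;q)_{2m}}{(q;q)_m}$, exactly the summand you display, which is the paper's \eqref{eq:qRD1}. The gap is in the evaluation step. After referring every $k$-dependent Pochhammer to its $k=0$ value and reversing with \eqref{eq:Kfour}, what one obtains (up to $k$-independent factors) is
\[
\sum_{k}\frac{(q^{-(n-m)};q)_k\,(q^{-(n+m)};q)_k}{(q;q)_k\,(q^{-2n+1};q^2)_k}\;q^{\frac{k(k+1)}{2}},
\]
and this is \emph{not} a ${}_2\phi_1$ in base $q$ or in base $q^2$: the two numerator parameters and $(q;q)_k$ are base-$q$ Pochhammers (they come from $(q;q)_{n-m-k}$ and $(q;q)_{n+m-k}$), while the denominator parameter $(q^{-2n+1};q^2)_k$ --- the reversal of the odd part $(q;q^2)_{n-k}$ that you correctly identified as the driver of the parity phenomenon --- is a base-$q^2$ Pochhammer, and a genuinely quadratic power $q^{k(k+1)/2}$ survives all the reversals. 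Neither \eqref{eq:Gauss1} nor \eqref{eq:Gauss2} can be applied to such a mixed-base sum with quadratic argument; the $q$-Chu--Vandermonde summations only evaluate ${}_2\phi_1$'s in a single base with a fixed power of $q$ as argument (in the $q$-Touchard proof the quadratic prefactor was fully absorbed by the reversals, but here it demonstrably is not).

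What the proof actually requires --- and what the paper, following Andrews, uses --- is the $q$-analog of Gauss's second theorem, \eqref{eq:second_Gauss}, whose summand has precisely this mixed-base shape $\frac{(a;q)_k(b;q)_k}{(q;q)_k(c;q^2)_k}q^{k(k+1)/2}$ and which is applicable only because the constraint $c=abq$ happens to hold here: with $a=q^{-(n-m)}$ and $b=q^{-(n+m)}$ one gets $abq=q^{-2n+1}=c$. Your intuition about the dichotomy is right, but it is realized through the infinite products on the right-hand side of \eqref{eq:second_Gauss}: the factor $(q^{-(n-m)+1};q^2)_{\infty}$ contains $1-q^0=0$ exactly when $n-m$ is odd, while for $n-m$ even the three relevant products truncate via \eqref{eq:Kone} to finite ones that simplify to the stated closed form. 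So the overall architecture of your proposal (cleanup, reversal, closed-form summation, parity analysis) is sound, but the summation theorem you commit to would fail; without \eqref{eq:second_Gauss} (or an equivalent quadratic summation) the argument cannot be completed.
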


\begin{proof}
We transform a generic term in the sum of Proposition~\ref{eq:qRD} as:
\begin{align}
&  \frac{1}{(-q;q)_{n-k}}\qbinom{n-m}{k} T_{n-k, m}(q)
 = \frac{1}{(-q;q)_{n-k}}\qbinom{n-m}{k}
   \frac{\qnum{2m}! \qnum{2n-2k}!}{\qnum{m}! \qnum{n-k}! \qnum{n -k +m}!} \\
\label{eq:qRD1}
& = \frac{(q;q)_{2m}}{(q;q)_m} \frac{(q^{n-m-k+1};q)_k}{(q;q)_k}
   \frac{(q;q)_{2n-2k}}{(q^2;q^2)_{n-k}(q;q)_{n-k+m}} \\
\label{eq:qRD2}
& = (-1)^k q^{\frac{k(2n - 2m -k +1)}{2}} \frac{(q;q)_{2m}}{(q;q)_m}
    \frac{(q^{-(n-m)};q)_k  (q;q^2)_{n - k}(q^{n + m -k +1};q)_k}{(q;q)_k (q;q)_{n-k+m} (q^{n + m -k +1};q)_k} \\
\label{eq:qRD3}
& = q^{\frac{k(2n - 2m -k +1)}{2}}  q^{\frac{k(2n + 2m - k +1)}{2}}
   \frac{(q;q)_{2m}}{(q;q)_m}
    \frac{(q^{-(n-m)};q)_k (q^{-(n+m)};q)_k  (q;q^2)_{n - k} (q^{2n - 2k +1};q^2)_{k} }{(q;q)_k (q;q)_{n+m} (q^{2n - 2k +1};q^2)_{k}} \\
\label{eq:qRD4}
& = (-1)^k q^{k(k-2n+1)} q^{-k(k - 2n )} \frac{(q;q)_{2m}}{(q;q)_m}
    \frac{(q^{-(n-m)};q)_k (q^{-(n+m)};q)_k (q;q^2)_n}{(q;q)_k (q;q)_{n+m}
                                                       (q^{-2n+1};q^2)_k}.
\end{align}
where 
\begin{itemize}
\item in the denominator of \eqref{eq:qRD1}, we have combined $ (-q;q)_{n-k}
(q;q)_{n-k} $ to get $(q^2;q^2)_{n-k}$ using \eqref{eq:Kone}.
\item in \eqref{eq:qRD2}, we used \eqref{eq:Ktwo} to write $( q ; q^2 )_{n-k}
= \frac{( q ; q )_{2( n - k )}}{ ( q^2 ; q^2 )_{n -k }}$, then multiplied the
numerator and the denominator by $( q^{n + m - k +1} ; q )_k$, and used
\eqref{eq:Kfour} to reverse the powers in $ ( q^{n - m -k +1} ; q )_k$.
\item in \eqref{eq:qRD3}, we multiplied the numerator and the denominator by
$( q^{2 n-2 k +1} ; q^2 )_k $, reversed the powers in
$ ( q^{n + m -k +1} ; q)_k $ in the numerator using \eqref{eq:Kfour},
used \eqref{eq:Kone} to write
$( q ; q )_{n + m -k} ( q^{n + m -k +1} ; q )_k = ( q ; q )_{n + m} $ in the
denominator.
\item in \eqref{eq:qRD4}, we used \eqref{eq:Kone} to write
$( q ; q^2 )_{n -k }( q^{2( n -k )+1} ; q^2 )_k = ( q ; q^2 )_n $ in the
numerator and \eqref{eq:Kfour} to reverse powers in
$( q^{2 n -2 k +1} ; q^2)_k$ in the denominator.
\end{itemize}

Summing over $k$ then leads to
\begin{align*}
& \sum \limits_{k=0}^{n-m}  (-1)^k  q^{\frac{k(k-1)}{2}}
  \frac{1}{(-q;q)_{n-k}}\qbinom{n-m}{k} T_{n-k, m}(q) \\
&
=  \frac{(q;q)_{2m}}{(q;q)_m} \frac{(q;q^2)_n}{ (q;q)_{n+m}}
   \sum_{k=0}^{n-m}
    \frac{(q^{-(n-m)};q)_k (q^{-(n+m)};q)_k }{(q;q)_k (q^{-2n+1};q^2)_k}
    q^{\frac{k(k+1)}{2}}.
\end{align*}
We now apply the $q$-analog of the second theorem of
Gauss~\cite{Andrews1974}, \cite{Johnson}~(Theorem 50):
\begin{equation}
\label{eq:second_Gauss}
\sum_{k=0}^{\infty}
  \frac{(a;q)_k (b;q)_k }{(q;q)_k (c;q^2)_k} q^{\frac{k(k+1)}{2}}
= \frac{(aq;q^2)_{\infty}(bq;q^2)_{\infty}}{(q;q^2)_{\infty} (a b q;q^2)_{\infty}}
\quad \text{if} \quad c = abq,
\end{equation}
with
\begin{align*}
& a = q^{-(n-m)}  \quad b = q^{-(n+m)} 
\quad c = q^{-2n +1},
\end{align*}
so that
\begin{align*}
& \sum_{k=0}^{n-m}
  (-1)^k  q^{\frac{k(k-1)}{2}}
  \frac{1}{(-q;q)_{n-k}}\qbinom{n-m}{k} T_{n-k, m}(q) \\
& = \frac{(q;q)_{2m}}{(q;q)_m} \frac{(q;q^2)_n}{ (q;q)_{n+m}}
   \frac{(q^{-(n-m) +1};q^2)_{\infty} (q^{-(n+m) +1};q^2)_{\infty} }{ (q;q^2)_{\infty} (q^{-2n+1};q^2)_{\infty}} \\
& = 
\frac{(q;q)_{2m}}{(q;q)_m} \frac{(q;q^2)_n}{ (q;q)_{n+m}}
 \frac{ (q^{-(n-m) +1};q^2)_{(n-m)/2} (q^{-(n+m) +1};q^2)_{(n+m)/2} }{(q^{-2n+1};q^2)_{n}}, \quad \text{if $n-m$ is even} \\
\end{align*}
Here we have used \eqref{eq:Kone} to write
\begin{align*}
& ( q^{-( n -m )+1} ; q^2 )_{\infty} = ( q^{-( n -m )+1} ; q^2 )_{\frac{(n-m)}{2}} ( q ; q^2 )_{\infty} \\
& ( q^{-( n + m )+1} ; q^2 )_{\infty} = ( q^{-( n + m )+1} ; q^2 )_{\frac{( n+m )}{2}} ( q ; q^2 )_{\infty} \\
& ( q^{-2 n +1} ; q^2 )_{\infty} = ( q^{-2 n +1} ; q^2 )_n ( q ; q^2 )_{\infty}
\end{align*}
and canceled equal infinite products. \\
The product in the numerator vanishes when $n-m$ is odd, hence the statement.
\end{proof}

\section{Convolution formulas}
\label{sec:convo}

In the classical case at $q=1$, convolution formulas are already known among
central binomial coefficients, Catalan numbers, and between both sequences.
We present here $q$-versions of these convolution formulas.

When $q=1$, the cases $m=0$ and $m=1$ are respectively known as Sved and
Segner formulas. In our notation, they read

\begin{enumerate}
\item{central binomial coefficients, aka Sved formula, \cite{Koshy}  p. 99,
\cite{Gould} (3.90):}
\begin{equation}
\label{eq:cbc}
\sum_{k=0}^n \binom{2 k}{k} \binom{2n - 2k}{n-k} = 2^{2n} 
\quad \Leftrightarrow \quad
\sum_{k=0}^n T_{k,0} T_{n-k,0} = 2^{2n} .
\end{equation}
\item{Catalan convolution, aka Segner relation, \cite{Koshy} p. 114:} 
\begin{equation}
\label{eq:Ca-co}
\sum_{k=0}^{n} C_{n-k} C_k = C_{n+1}
\quad \Leftrightarrow \quad
\sum_{k=0}^{n}T_{n-k,1} T_{k,1} = 2 T_{n+1,1} .
\end{equation}
\end{enumerate}

Both of these equations generalize to the $q$ case.

Our main tool in this Section is the $\frac1{2}$-trick developed
in~\cite{Gessel} that relates $q$-super Catalan numbers to $q$-binomial
coefficients. The following formula for any $q$-super Catalan can be established by direct
computation:

\begin{lem}
Let $k$ and $m$ be nonnegative integers. We have
\begin{equation}
\label{eq:half}
 \qbinomm{m -\frac1{2}}{k +m}
  = \frac{(-1)^k q^{-k^2} }{(-q;q)_{m} (-q;q)_{k} (-q;q)_{k +m} } T_{k,m}(q).
\end{equation}
\end{lem}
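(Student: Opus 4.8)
The plan is to prove the identity by direct computation, expanding both sides into their defining $q$-Pochhammer products and reducing one to the other using only the four elementary $q$-series identities \eqref{eq:Kone}--\eqref{eq:Kfour} recorded in the introduction. First I would rewrite the $q^2$-binomial on the left using the definition \eqref{eq:binom} adapted to base $q^2$, namely
\begin{equation*}
\qbinomm{m-\frac1{2}}{k+m}
  = \frac{(q^{2(m-\frac1{2})-2(k+m)+2};q^2)_{k+m}}{(q^2;q^2)_{k+m}}
  = \frac{(q^{-2k-1};q^2)_{k+m}}{(q^2;q^2)_{k+m}},
\end{equation*}
where the half-integer top entry produces the odd shift $q^{-2k-1}$ in the $q^2$-Pochhammer. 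This is the device that the text calls the \emph{$\frac1{2}$-trick}: a half-integer argument turns a $q^2$-binomial into a product over odd powers of $q$, which is exactly what connects to the $(q;q)$-type products appearing in $T_{k,m}(q)$.

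Next I would target the right-hand side. Writing out $T_{k,m}(q)$ via \eqref{eq:super} as $\dfrac{(q;q)_{2m}(q;q)_{2k}}{(q;q)_m(q;q)_k(q;q)_{k+m}}$, the claim reduces to showing
\begin{equation*}
\frac{(q^{-2k-1};q^2)_{k+m}}{(q^2;q^2)_{k+m}}
 = \frac{(-1)^k q^{-k^2}}{(-q;q)_m(-q;q)_k(-q;q)_{k+m}}\,
   \frac{(q;q)_{2m}(q;q)_{2k}}{(q;q)_m(q;q)_k(q;q)_{k+m}}.
\end{equation*}
The strategy is to transform the left side into the right. First apply the power-reversal \eqref{eq:Kfour} to $(q^{-2k-1};q^2)_{k+m}$ (with $a=-2k-1$, $b=2$, $n=k+m$) to turn the negative exponents into positive ones; this is where the prefactor $(-1)^k q^{-k^2}$ will be generated, together with an explicit power of $q$ that I expect to collapse to $q^{-k^2}$ after combining with the reversal sign, using the arithmetic identity $\tfrac{(k+m)(k+m-1)}{2}$-type exponent bookkeeping. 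Then I would split the resulting product of odd powers into $(q;q)$-pieces using \eqref{eq:Kthree} and \eqref{eq:Ktwo}, and factor the $(q^2;q^2)$ denominators via \eqref{eq:Ktwo} as $(-q;q)(q;q)$, which is precisely what manufactures the three $(-q;q)$ factors on the right.

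The main obstacle I anticipate is the exponent bookkeeping in the power-reversal step: matching the power of $q$ coming out of \eqref{eq:Kfour} to the clean target $q^{-k^2}$ requires care, because the naive exponent from \eqref{eq:Kfour} involves $a(k+m)+2\binom{k+m}{2}$ and does not obviously simplify until one also tracks the $q$-powers liberated when \eqref{eq:Kthree} splits the odd-power product and when the $(q;q)_{2k}$ and $(q;q)_{2m}$ factors are reassembled. A cleaner route, which I would try in parallel, is to verify the identity by checking that both sides satisfy the same first-order recurrence in $k$ (the ratio of consecutive terms), reducing the whole claim to one algebraic identity between ratios of the form $\dfrac{(1-q^{-2k-1})\cdots}{(1-q^{2(k+m)})\cdots}$ on the left and the corresponding ratio built from $T_{k,m}(q)/T_{k-1,m}(q)$, $(-q;q)$-ratios, and the factor $-q^{-2k+1}$ on the right; together with a trivial base case at $k=0$ (where the left side is $\dfrac{(q^{-1};q^2)_m}{(q^2;q^2)_m}$ and the right side reduces to $\dfrac{1}{(-q;q)_m}\cdot\dfrac{(q;q)_{2m}}{(q;q)_m^2}$, which I would check directly using \eqref{eq:Kthree} and \eqref{eq:Ktwo}), this recurrence approach sidesteps the most error-prone global exponent computation.
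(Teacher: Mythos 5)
Your overall strategy --- expand the $q^2$-binomial via \eqref{eq:binom}, then reduce to $T_{k,m}(q)$ using only \eqref{eq:Kone}--\eqref{eq:Kfour} --- is exactly the paper's, but two concrete errors in the plan would make the execution fail. First, the exponent is miscomputed at the very first step: $2(m-\tfrac12)-2(k+m)+2 = 1-2k$, not $-2k-1$, so the left-hand side equals $(q^{\,1-2k};q^2)_{k+m}/(q^2;q^2)_{k+m}$. With your $(q^{-2k-1};q^2)_{k+m}$, the displayed identity you set out to prove is simply false: the two Pochhammer products differ by the factor $(1-q^{-2k-1})/(1-q^{2m-1})$, which is not $1$ in general. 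The same slip infects the base case of your fallback route: at $k=0$ the left side is $(q;q^2)_m/(q^2;q^2)_m$, not $(q^{-1};q^2)_m/(q^2;q^2)_m$, and the right side is $\frac{1}{(-q;q)_m^{2}}\cdot\frac{(q;q)_{2m}}{(q;q)_m^{2}}$ --- you dropped one factor of $(-q;q)_m$, since $(-q;q)_{k+m}$ at $k=0$ is a second copy of $(-q;q)_m$. (Correctly computed, the two sides do agree at $k=0$, via $(q;q)_{2m}=(q;q^2)_m(q^2;q^2)_m$ and $(q^2;q^2)_m=(-q;q)_m(q;q)_m$.)

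Second, your main route applies the reversal \eqref{eq:Kfour} to the \emph{entire} length-$(k+m)$ product, expecting it to ``turn the negative exponents into positive ones'' and emit $(-1)^k q^{-k^2}$. It cannot: with $a=1-2k$, $b=2$, $n=k+m$, identity \eqref{eq:Kfour} gives $(q^{\,1-2k};q^2)_{k+m}=(-1)^{k+m}q^{(k+m)(m-k)}(q^{\,1-2m};q^2)_{k+m}$, i.e., the sign is $(-1)^{k+m}$, not $(-1)^k$, and the result still has $m$ factors with negative exponents --- full reversal merely exchanges the roles of $k$ and $m$. The missing idea (and the paper's) is to split \emph{before} reversing: by \eqref{eq:Kone}, $(q^{\,1-2k};q^2)_{k+m}=(q^{\,1-2k};q^2)_k\,(q;q^2)_m$, and then \eqref{eq:Kfour} applied only to the length-$k$ factor gives $(q^{\,1-2k};q^2)_k=(-1)^k q^{-k^2}(q;q^2)_k$ cleanly; after that, $(q;q^2)_m=(q;q)_{2m}/(q^2;q^2)_m$ and $(q;q^2)_k=(q;q)_{2k}/(q^2;q^2)_k$ by \eqref{eq:Kthree}, and $(q^2;q^2)_j=(-q;q)_j(q;q)_j$ by \eqref{eq:Ktwo} for $j\in\{m,k,k+m\}$ finish the proof along the lines you envisage. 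So the plan is salvageable, but as written its central claimed identity is false and its key reduction step produces the wrong sign and does not remove the negative exponents.
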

\begin{proof}
By the definition of the $q$-binomial (\ref{eq:binom})
\begin{align}
\label{eq:b1}
& \qbinomm{m -\frac1{2}}{k +m} =\frac{(q^{1-2k};q^2)_{k+m}}{(q^2;q^2)_{k+m}} =
\frac{(q;q^2)_m (q^{-2k+1};q^2)_k}{(q^2;q^2)_{k+m}}   \\
\label{eq:b2}
& = (-1)^k q^{-k^2} \frac{  ( q ; q^2 )_m ( q ; q^2 )_k }{(q^2;q^2)_{k+m}} =
(-1)^k q^{-k^2} \frac{ ( q ; q )_{2 m}( q ; q )_{2 k} }{ ( q^2 ; q^2 )_m  (q^2
; q^2 )_k(q^2;q^2)_{k+m}} \\
\label{eq:b3}
& =  \frac{(-1)^k q^{-k^2} }{(-q;q)_{m} (-q;q)_{k} (-q;q)_{k +m} } \frac{ ( q
; q )_{2 m}( q ; q )_{2 k} }{ ( q ; q)_m  (q ; q )_k (q;q)_{k+m}}  =
\frac{(-1)^k q^{-k^2} }{(-q;q)_{m} (-q;q)_{k} (-q;q)_{k +m} } T_{k,m}(q),
\end{align}
where 
\begin{itemize}
\item in \eqref{eq:b1}, we have used \eqref{eq:Kone} to write  $( q^{1 - 2 k} ; q^2 )_{k + m} = ( q ; q^2 )_m ( q^{2 k +1} ; q^2 )_k $;
\item in \eqref{eq:b2}, we have first reversed powers as in \eqref{eq:Kfour}
to write $( q^{2 k +1} ; q^2 )_k = ( -1)^k q^{k^2} ( q ; q^2 )_k $ and then $(
q ; q^2 )_m = \frac{( q ; q )_{2 m}}{ ( q^2 ; q^2 )_m } $ (and the same for $(
q ; q^2 )_k$ ) using \eqref{eq:Kone};
\item in \eqref{eq:b3}, we have used \eqref{eq:Ktwo} to rewrite $( q^2 ; q^2
)_m = ( -q ; q )_m ( q ; q )_m $ and identically for $( q^2 ; q^2 )_k$ and
$( q^2 ; q^2 )_{k+m}$.
\end{itemize}
~\end{proof}

Let us return to convolution formulas and  consider the $m=0$ case first,
\emph{i.e.} the $q$-Sved identity.

\begin{state}
Let $n$ be a nonnegative integer. We have
\begin{equation}
\label{eq-q-Sved}
\sum_{k=0}^n \frac{q^{n-k}}{(-q;q)_k^2 (-q;q)_{n-k}^2} \qbinom{2k}{k}
\qbinom{2n-2k}{n-k} = 1.
\end{equation}
\end{state}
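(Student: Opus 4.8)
The plan is to reduce the identity to the $q^2$-analogue of the elementary Vandermonde convolution $\sum_k \binom{-1/2}{k}\binom{-1/2}{n-k} = \binom{-1}{n} = (-1)^n$; at $q = 1$ this is precisely how the Sved formula~\eqref{eq:cbc} drops out once the central binomial coefficients are written through half-integer binomials. The two ingredients will be the $\frac12$-trick~\eqref{eq:half} specialized to $m = 0$, together with the $q$-Chu--Vandermonde summation applied in base $q^2$.

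First I would put $m = 0$ in~\eqref{eq:half}. Since $(-q;q)_0 = 1$ and $T_{k,0}(q) = \qbinom{2k}{k}$, it reads
\[
\qbinomm{-\frac{1}{2}}{k} = \frac{(-1)^k q^{-k^2}}{(-q;q)_k^2}\qbinom{2k}{k}, \qquad\text{equivalently}\qquad \frac{1}{(-q;q)_k^2}\qbinom{2k}{k} = (-1)^k q^{k^2}\qbinomm{-\frac{1}{2}}{k}.
\]
Substituting this for both central binomial factors in~\eqref{eq-q-Sved} and collecting the signs and the powers of $q$, the left-hand side turns into
\[
(-1)^n \sum_{k=0}^n q^{(n-k)+k^2+(n-k)^2}\,\qbinomm{-\frac{1}{2}}{k}\,\qbinomm{-\frac{1}{2}}{n-k}.
\]

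Next I would read the surviving sum as a terminating Vandermonde-type convolution of $q^2$-binomial coefficients. Expanding the exponent as $(n-k)+k^2+(n-k)^2 = n(n+1) + (2k^2 - 2nk - k)$, the $k$-dependent part $2k^2 - 2nk - k$ is exactly the weight in the convolution $\sum_k (q^2)^{(b-n+k)k}\qbinomm{a}{k}\qbinomm{b}{n-k} = \qbinomm{a+b}{n}$ taken at $a = b = -\frac{1}{2}$; this convolution is a base-$q^2$ instance of the $q$-Chu--Vandermonde summation~\eqref{eq:Gauss2} once it is rewritten as a terminating ${}_2\phi_1$ with argument $q^2$. Pulling the $k$-independent factor $q^{n(n+1)}$ out of the sum therefore collapses the remaining sum to $q^{n(n+1)}\,\qbinomm{-1}{n}$.

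It then remains to evaluate $\qbinomm{-1}{n}$. By~\eqref{eq:binom} in base $q^2$ it equals $(q^{-2n};q^2)_n/(q^2;q^2)_n$, and reversing the powers with~\eqref{eq:Kfour} gives $\qbinomm{-1}{n} = (-1)^n q^{-n(n+1)}$. Assembling the pieces, the left-hand side of~\eqref{eq-q-Sved} becomes $(-1)^n\cdot q^{n(n+1)}\cdot (-1)^n q^{-n(n+1)} = 1$, as required. The one delicate point is the accounting of the powers of $q$: I would have to select the version of the $q$-Chu--Vandermonde summation whose weight is precisely $2k^2 - 2nk - k$, so that the leftover factor really is the $k$-free power $q^{n(n+1)}$, and then verify that it cancels exactly against the power produced by $\qbinomm{-1}{n}$.
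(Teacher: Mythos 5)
Your proof is correct and follows essentially the same route as the paper: both rest on the $m=0$ case of the half-trick~\eqref{eq:half} combined with the base-$q^2$ Chu--Vandermonde convolution $\sum_{k} q^{2k(k-n-\frac{1}{2})} \qbinomm{-\frac{1}{2}}{k} \qbinomm{-\frac{1}{2}}{n-k} = \qbinomm{-1}{n}$, the only difference being that the paper expands $\qbinomm{-1}{n}$ and simplifies toward the identity while you run the same computation in the reverse direction. Incidentally, your evaluation $\qbinomm{-1}{n} = (-1)^n q^{-n(n+1)}$ is the correct one; the value $(-1)^n q^{-n(n-1)}$ displayed in the paper's proof is a typo, as the paper's own final simplification (which is consistent with your exponent) confirms.
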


\begin{proof}
We use the $q$-Vandermonde identity and identify each of the $q$-binomials with the corresponding central binomial coefficient as in~\eqref{eq:half} :
\begin{align*}
&  (-1)^n q^{- n(n-1)} =  \qbinomm{-1}{n} = \sum_{k=0}^{n} q^{2 k (-n
-\frac1{2}  + k  )} \qbinomm{ -\frac1{2}}{k} \qbinomm{ -\frac1{2}}{n  - k }
\\
 & =\sum_{k=0}^{n} q^{-2k n -k +2k^2} (-1)^k \frac{q^{-k^2}}{(-q;q)_k^2}
\qbinom{2k}{k}   (-1)^{n-k} \frac{q^{-(n-k)^2}}{(-q;q)_{n-k}^2}
\qbinom{2n-2k}{n-k} \\
 & = \sum_{k=0}^{n} (-1)^n q^{-n^2-k} \frac1{(-q;q)_k^2 (-q;q)_{n-k}^2} \qbinom{2k}{k}  \qbinom{2n-2k}{n-k}.
\end{align*}
Simplifying, we get~\eqref{eq-q-Sved}.
\end{proof}

Next we consider $m=1$ and obtain the $q$-Segner identity:

\begin{state}
Let $n$ be a nonnegative integer. We have
\begin{equation}
\label{eq:q-Catalan-conv}
\sum_{k=0}^n
  q^k
  \frac{(-q^{k+1} ;q)_{n-k +1}(-q^{k+2};q)_{n-k} }{ (-q;q)_{n - k+1} (-q ;q)_{n-k}}
  T_{k,1}(q) T_{n-k,1}(q)
=  (1+q) T_{n+1,1}(q).
\end{equation}
\end{state}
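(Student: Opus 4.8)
The plan is to imitate the proof of the $q$-Sved identity given just above, using the $\frac1{2}$-trick of the Lemma (equation~\eqref{eq:half}) in its $m=1$ instance together with the same base-$q^2$ $q$-Vandermonde identity
\[
\qbinomm{x+y}{N}=\sum_{j=0}^{N} q^{2j(j-N+y)}\qbinomm{x}{j}\qbinomm{y}{N-j}.
\]
The new feature is that, whereas the Sved argument starts from the nonzero binomial $\qbinomm{-1}{n}$, here the relevant specialization is $x=y=\frac12$, $N=n+2$, whose left-hand side $\qbinomm{1}{n+2}$ \emph{vanishes} for every $n\ge0$ (its numerator $(q^{-2n};q^2)_{n+2}$ carries the factor $1-q^{0}$). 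The whole convolution will therefore emerge from the two \emph{boundary} terms $j=0$ and $j=n+2$ of this vanishing Vandermonde sum, mirroring how the classical Segner relation arises from $\sum_{j}\binom{1/2}{j}\binom{1/2}{N-j}=\binom{1}{N}=0$.

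First I would simplify the coefficient. Using the $(-q;q)$-versions of~\eqref{eq:Kone}, namely $(-q^{k+1};q)_{n-k+1}=(-q;q)_{n+1}/(-q;q)_k$ and $(-q^{k+2};q)_{n-k}=(-q;q)_{n+1}/(-q;q)_{k+1}$, the weight in~\eqref{eq:q-Catalan-conv} becomes the symmetric expression
\[
q^{k}\,\frac{(-q^{k+1};q)_{n-k+1}(-q^{k+2};q)_{n-k}}{(-q;q)_{n-k+1}(-q;q)_{n-k}}
=q^{k}\,\frac{(-q;q)_{n+1}^{2}}{(-q;q)_{k}(-q;q)_{k+1}(-q;q)_{n-k}(-q;q)_{n-k+1}}.
\]
Next I would insert the $m=1$ case of~\eqref{eq:half}, i.e. $T_{k,1}(q)=(-1)^{k}q^{k^{2}}(1+q)(-q;q)_{k}(-q;q)_{k+1}\qbinomm{\frac12}{k+1}$, for both $T_{k,1}(q)$ and $T_{n-k,1}(q)$. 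Every $(-q;q)$-factor then cancels against the coefficient above, and the left-hand side of~\eqref{eq:q-Catalan-conv} collapses to
\[
(-1)^{n}(1+q)^{2}(-q;q)_{n+1}^{2}\sum_{k=0}^{n} q^{\,k+k^{2}+(n-k)^{2}}\qbinomm{\frac12}{k+1}\qbinomm{\frac12}{n-k+1}.
\]

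Then I would substitute $j=k+1$ and use the elementary identity $k+k^{2}+(n-k)^{2}=(n+1)^{2}+2j\left(j-n-\frac32\right)$ to recognise the remaining sum as $q^{(n+1)^{2}}$ times the \emph{interior} ($1\le j\le n+1$) of the Vandermonde expansion of $\qbinomm{1}{n+2}$ above. Since that full sum equals $0$, the interior equals minus its two boundary terms; the $j=0$ term is $\qbinomm{\frac12}{n+2}$ and the $j=n+2$ term is $q^{n+2}\qbinomm{\frac12}{n+2}$, so the interior sum is $-(1+q^{n+2})\qbinomm{\frac12}{n+2}$. Finally I would convert $\qbinomm{\frac12}{n+2}$ back to $T_{n+1,1}(q)$ through~\eqref{eq:half} (with $k=n+1$, $m=1$) and use $(-q;q)_{n+2}=(-q;q)_{n+1}(1+q^{n+2})$; the signs, the powers $q^{\pm(n+1)^{2}}$, and the factors $1+q^{n+2}$ all cancel, leaving exactly $(1+q)\,T_{n+1,1}(q)$.

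The main obstacle is purely the bookkeeping, not any conceptual difficulty: one must check that $\qbinomm{1}{n+2}=0$, carry the two quadratic $q$-exponents coming from the Vandermonde weight and from the two applications of~\eqref{eq:half} through cleanly so that they telescope via $k+k^{2}+(n-k)^{2}=(n+1)^{2}+2(k+1)\left(k+1-n-\frac32\right)$, and combine the two boundary contributions into the single factor $1+q^{n+2}$ that then cancels $(-q;q)_{n+2}/(-q;q)_{n+1}$. Once these are verified, the statement is a direct specialization of the $q$-Sved argument at $m=1$.
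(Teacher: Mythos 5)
Your proposal is correct and is essentially the paper's own proof: the paper likewise applies the $q$-Vandermonde identity in base $q^2$ to the vanishing binomial $\qbinomm{1}{n+2}$, isolates the two boundary terms $k=-1$ and $k=n+1$, and converts the interior terms to $T_{k,1}(q)T_{n-k,1}(q)$ via the $m=1$ case of~\eqref{eq:half}. The only difference is direction of travel (you start from the convolution and recognize the Vandermonde interior, while the paper starts from the Vandermonde expansion and derives the convolution), which is not a substantive distinction.
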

\begin{proof}
We shall again use the $q$-Vandermonde formula and identify each of the
$q$-binomials with the corresponding $q$-Catalan as in~\eqref{eq:half} and
then isolate the boundary terms, \emph{i.e.}, $k=-1$ and $k = n+1$:
\begin{align}
& \sum_{k=-1}^{n+1} q^{2 (k+1) (-n -\frac1{2}  + k  )}\qbinomm{\frac1{2}}{k +1} \qbinomm{\frac1{2}}{n  - k + 1} = \qbinomm{1}{n+2} = 0 \\
 & 0 =  \qbinomm{\frac1{2}}{(n  +1) + 1}  + q^{n+2} \qbinomm{\frac1{2}}{(n+1) +1} \\
 & + \sum_{k=0}^{n} q^{2 (k +1) ( -\frac1{2}  - n  +k  )}  \frac{(-1)^k q^{-k^2} }{(-q;q)_{1} (-q;q)_{k} (-q;q)_{k +1} } T_{k,1}(q)
 \frac{(-1)^{n-k} q^{-(n-k)^2} }{(-q;q)_{1} (-q;q)_{n-k} (-q;q)_{n-k +1} } T_{n-k,1}(q) \notag \\
 & 0 = \left(1 + q^{n+2} \right) \frac{(-1)^{n+1} q^{-(n+1)^2} }{(-q;q)_{1} (-q;q)_{n+1} (-q;q)_{n +2} } T_{n+1,1}(q) \\
& + \sum_{k=0}^{n}  (-1)^n \frac{q^{k - (n+1)^2} }{(-q;q)_{1}^2 (-q;q)_{k} (-q;q)_{k +1} (-q;q)_{n-k} (-q;q)_{n-k +1}} T_{k,1}(q)T_{n-k,1}(q) \notag \\
&  \frac1{ (-q ;q)_{n+1}^2} T_{n+1,1}(q) = \sum_{k=0}^n \frac{q^k  }{(-q;q)_1(-q;q)_{k +1} (-q ;q)_k (-q;q)_{n - k+1} (-q ;q)_{n-k}} T_{k,1}(q) T_{n-k,1}(q) \\
& (1 + q)T_{n+1,1}(q)  = \sum_{k=0}^n q^k \frac{(-q^{k+1} ;q)_{n-k +1}(-q^{k+2};q)_{n-k} }{ (-q;q)_{n - k+1} (-q ;q)_{n-k}} T_{k,1}(q) T_{n-k,1}(q).
\end{align}
\end{proof}

As mentioned before, there also exist convolution formulas involving both
central binomial coefficients and Catalan numbers,
see~\cite{Koshy} (5.24), p.~140:
\begin{equation}
\label{eq:cbc-cat}
\sum_{k=0}^n 2 C_k \binom{2n-2k}{n-k} = \binom{2n + 2}{n+1} 
\quad  \Leftrightarrow \quad
\sum_{k=0}^n T_{k,1} T_{n-k,0} = T_{n+1,0}.
\end{equation}

We have the following generalization of this formula:

\begin{state} 
Let $n$ be a nonnegative integer. We have
\begin{equation}
\label{eq:q-cbc-cat}
\sum_{k=0}^n \frac{q^{n-k }(-q;q)_{n+1}^2}{(1+q)(- q;q)_k^2 (-q;q)_{n-k}
(-q;q)_{n-k+1}} T_{k,0}(q) T_{n-k,1}(q) = T_{n+1,0}(q).
\end{equation}
\end{state}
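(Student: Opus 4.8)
The plan is to mimic the proofs of the $q$-Sved and $q$-Segner identities above, combining the $q$-Vandermonde convolution with the $\frac{1}{2}$-trick of Lemma~\eqref{eq:half}. The key observation is that the two factors in the sum carry \emph{different} super-parameters, $T_{k,0}(q)$ and $T_{n-k,1}(q)$, so they must be matched to $q^2$-binomials with different half-integer tops: by \eqref{eq:half} with $m=0$ and $m=1$,
\[
\qbinomm{-\frac{1}{2}}{k} = \frac{(-1)^k q^{-k^2}}{(-q;q)_k^2}\,T_{k,0}(q),
\qquad
\qbinomm{\frac{1}{2}}{k+1} = \frac{(-1)^k q^{-k^2}}{(-q;q)_1(-q;q)_k(-q;q)_{k+1}}\,T_{k,1}(q).
\]
Since $-\frac{1}{2}+\frac{1}{2}=0$, the natural vehicle is the $q$-Vandermonde identity (in base $q^2$) applied to $\qbinomm{0}{n+1}$, which vanishes because $n+1\geq 1$.

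First I would expand, using the same orientation of the $q$-Vandermonde convolution that underlies the $q$-Sved and $q$-Segner computations,
\[
0 = \qbinomm{0}{n+1}
  = \sum_{j=0}^{n+1} q^{2j\left(j-n-\frac{1}{2}\right)}
       \qbinomm{-\frac{1}{2}}{j}\,\qbinomm{\frac{1}{2}}{n+1-j},
\]
so that the first factor always supplies a $T_{j,0}(q)$. For $0\le j\le n$ the second factor is $\qbinomm{\frac{1}{2}}{(n-j)+1}$, exactly the one attached to $T_{n-j,1}(q)$, so these terms reassemble the convolution on the left of \eqref{eq:q-cbc-cat}. The single remaining value $j=n+1$ is a boundary term: there $\qbinomm{\frac{1}{2}}{0}=1$, while $\qbinomm{-\frac{1}{2}}{n+1}$ carries the lone $T_{n+1,0}(q)$ destined to become the right-hand side.

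Next I would substitute the two instances of \eqref{eq:half} into every term and collect powers and signs. Writing $k=j$, the Vandermonde exponent $2j(j-n-\frac12)$ combines with the $q^{-j^2}$ and $q^{-(n-j)^2}$ from the $\frac12$-trick to leave an overall $j$-independent factor $q^{-n^2}$ times $q^{-j}$, while the signs $(-1)^j$ and $(-1)^{n-j}$ merge into the $j$-independent $(-1)^n$; the $(-q;q)$ denominators assemble into $(-q;q)_j^2\,(-q;q)_1\,(-q;q)_{n-j}\,(-q;q)_{n-j+1}$. The boundary term at $j=n+1$ evaluates to $(-1)^{n+1} q^{-n^2-n}(-q;q)_{n+1}^{-2}\,T_{n+1,0}(q)$. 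Moving it across, cancelling the common factor $(-1)^n q^{-n^2}$, multiplying through by $q^{n}(-q;q)_{n+1}^2$, and recognising $(-q;q)_1=1+q$ reproduces \eqref{eq:q-cbc-cat} exactly.

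The computation is routine once the set-up is fixed; the only genuine decisions are pairing $T_{k,0}(q)$ with the top $-\frac12$ and $T_{n-k,1}(q)$ with the top $+\frac12$ (forcing the harmless top $0$, hence a vanishing left side), and using the orientation of the $q$-Vandermonde exponent consistent with the $q$-Sved and $q$-Segner proofs, since the opposite orientation would produce a different, $j$-dependent power of $q$. The main bookkeeping obstacle is verifying that all $j$-dependent powers cancel except for the single $q^{n-j}$ demanded by \eqref{eq:q-cbc-cat}, and that exactly one boundary term survives; unlike the symmetric Catalan--Catalan case of \eqref{eq:q-Catalan-conv}, the lower end $j=0$ needs no special treatment because $T_{0,0}(q)=1$ is already a legitimate summand.
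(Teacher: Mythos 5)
Your proof is correct and is essentially the paper's own: the paper's proof of \eqref{eq:q-cbc-cat} consists of the single remark that one follows exactly the same steps as for the $q$-Segner identity \eqref{eq:q-Catalan-conv}, and your argument---the base-$q^2$ $q$-Vandermonde identity applied to $\qbinomm{0}{n+1}=0$, the $\frac{1}{2}$-trick \eqref{eq:half} with $m=0$ and $m=1$, and isolation of the single boundary term at $j=n+1$---is precisely that recipe, with your exponent and sign bookkeeping checking out. One minor aside: since the two tops $-\frac{1}{2}$ and $+\frac{1}{2}$ sum to $0$, the two possible orientations of the $q$-Vandermonde exponent differ only by the global constant factor $q^{-(n+1)}$, so the orientation choice you flag as a genuine decision is in fact immaterial here.
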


\begin{proof}
The proof proceeds by following exactly the same steps as in the proof of
Equation~\eqref{eq:q-Catalan-conv}.
\end{proof}

Using the same approach, it is possible to derive a general convolution
formula between any two $q$-super Catalan numbers $T_{k,m}(q)$ and
$T_{n-k,l}(q)$. We will detail this formula and closely related formulas,
and explore their implications in a forthcoming paper.

\section{Generating function for $q$-Catalan numbers}
\label{sec-genf}

In this Section we will derive the generating function for the $q$-Catalan
numbers from the convolution formulas. Our strategy is to derive a generating
function for the $q$-central binomial numbers first and then use the
convolution formula~\eqref{eq:q-cbc-cat} to derive a formula for
the generating function of the $q$-Catalan numbers.

Recall from Section~\ref{sec:convo}, the $q$-Sved formula \eqref{eq-q-Sved}:
\[
\sum_{k=0}^n \frac{q^{n-k}}{(-q;q)_k^2 (-q;q)_{n-k}^2} \qbinom{2k}{k}
 \qbinom{2n-2k}{n-k} = 1.
\]
We shall translate this identity into a functional equation on a generating
function for $q$-central binomial coefficients that will play a central
r\^ole in what comes next.

We begin by defining the generating function for the $q$-super Catalan numbers:

\begin{definition}
Let $m$ be a nonnegative integer.
The generating function of all $T_{n,m}(q)$ is
\begin{equation}
\label{eq:fm-def}
f_m(q,t) =
  \frac1{(-q;q)_m}\sum_{n=0}^{\infty}
  \frac{T_{n,m}(q)}{ (-q;q)_n (-q;q)_{n+m}}  t^n.
\end{equation}
\end{definition}

With these definitions, Equation~\eqref{eq-q-Sved} directly rewrites as the
functional equation
\begin{equation}
\label{eq:f0-func}
f_0(q,t) f_0(q,qt) = \frac1{1-t}.
\end{equation}

From this formula and the condition $f_0(q,0)=1$, one can obtain a product
formula for $f_0(q,t)$:

\begin{state} 
The generating function for the $q$-central binomial coefficients is
\begin{equation}
\label{eq:f0}
f_0(q,t) = \frac{(1- q t) (1 - q^3 t) \ldots}{(1- t)(1 - q^2 t) \ldots }
 = \frac{(qt;q^2)_{\infty}}{(t;q^2)_{\infty}}.
\end{equation}
\end{state}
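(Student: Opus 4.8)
The plan is to convert the multiplicative functional equation \eqref{eq:f0-func} into a first-order $q$-difference relation and then solve that relation by iteration. First I would write \eqref{eq:f0-func} both at $t$ and at $qt$, obtaining $f_0(q,t)\,f_0(q,qt)=\tfrac1{1-t}$ and $f_0(q,qt)\,f_0(q,q^2t)=\tfrac1{1-qt}$. Dividing the first equation by the second cancels the common factor $f_0(q,qt)$ and yields the clean recursion
\begin{equation}
f_0(q,t)=\frac{1-qt}{1-t}\,f_0(q,q^2t).
\end{equation}
This is the key structural identity: it expresses $f_0$ at argument $t$ in terms of its value at the contracted argument $q^2t$, with an explicit rational prefactor.

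Next I would iterate this relation $N$ times. Each application extracts one more factor and replaces the argument $q^{2j}t$ by $q^{2(j+1)}t$, so that
\begin{equation}
f_0(q,t)=\left(\prod_{j=0}^{N-1}\frac{1-q^{2j+1}t}{1-q^{2j}t}\right)f_0(q,q^{2N}t)
=\frac{(qt;q^2)_N}{(t;q^2)_N}\,f_0(q,q^{2N}t).
\end{equation}
Letting $N\to\infty$, the partial product tends to $\dfrac{(qt;q^2)_{\infty}}{(t;q^2)_{\infty}}$, since its numerator collects exactly the factors $1-q^{2j+1}t$ and its denominator the factors $1-q^{2j}t$; the normalization $f_0(q,0)=1$ then disposes of the residual factor and produces \eqref{eq:f0}.

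The main obstacle — really the only point demanding care — is justifying that $f_0(q,q^{2N}t)\to f_0(q,0)=1$, so that passage to the limit is legitimate. I would argue this at the level of formal power series in $t$: writing $f_0(q,t)=\sum_{n\ge0}c_n(q)t^n$ with $c_0=1$, the substitution $t\mapsto q^{2N}t$ multiplies the coefficient of $t^n$ by $q^{2Nn}$, which (for $|q|<1$, or $t$-adically) tends to $0$ for every $n\ge1$. Hence each coefficient of $f_0(q,q^{2N}t)$ stabilizes to that of the constant series $1$, while simultaneously each coefficient of $\tfrac{(qt;q^2)_N}{(t;q^2)_N}$ stabilizes after finitely many factors; both sides therefore converge coefficientwise and \eqref{eq:f0} follows.

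As an alternative that sidesteps the convergence discussion altogether, I could instead verify directly that the claimed product $F(q,t)=\tfrac{(qt;q^2)_{\infty}}{(t;q^2)_{\infty}}$ solves \eqref{eq:f0-func}: using $(t;q^2)_{\infty}=(1-t)(q^2t;q^2)_{\infty}$ one computes
\begin{equation}
F(q,t)\,F(q,qt)=\frac{(qt;q^2)_\infty}{(t;q^2)_\infty}\cdot\frac{(q^2t;q^2)_\infty}{(qt;q^2)_\infty}=\frac{(q^2t;q^2)_\infty}{(t;q^2)_\infty}=\frac1{1-t},
\end{equation}
and $F(q,0)=1$; uniqueness of the solution then finishes the proof, since extracting the coefficient of $t^n$ from \eqref{eq:f0-func} gives the recursion $c_n(q)\,(1+q^n)=1-\sum_{k=1}^{n-1}c_k(q)\,c_{n-k}(q)\,q^{n-k}$, which determines every $c_n(q)$ because $1+q^n\neq0$.
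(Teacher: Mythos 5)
Your proposal is correct, and its two halves line up with the paper as follows. Your ``alternative'' at the end is precisely the paper's first proof: check by telescoping that $F(q,t)=\frac{(qt;q^2)_\infty}{(t;q^2)_\infty}$ satisfies $F(q,t)F(q,qt)=\frac1{1-t}$ and then invoke uniqueness of the solution with constant term $1$; you actually improve on the paper here, since the paper leaves uniqueness implicit while you exhibit the recursion $c_n(q)(1+q^n)=1-\sum_{k=1}^{n-1}c_k(q)c_{n-k}(q)q^{n-k}$ that pins down every coefficient. Your primary route is a genuine (if mild) variant: dividing the functional equation at $t$ by the one at $qt$ to get the first-order relation $f_0(q,t)=\frac{1-qt}{1-t}f_0(q,q^2t)$ and iterating \emph{derives} the product constructively, rather than guessing it and verifying; this buys independence from any uniqueness lemma, at the price of a limiting argument. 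On that limit, one correction: the convergence $f_0(q,q^{2N}t)\to 1$ is not ``$t$-adic'' --- the substitution $t\mapsto q^{2N}t$ does not increase $t$-adic valuation --- it should be justified $q$-adically (equivalently, coefficientwise in $\mathbb{Q}[[q,t]]$, where each coefficient of $q^at^b$ stabilizes once $2N>a$), or analytically for $|q|<1$ as in your parenthetical; with that phrasing fixed the argument is sound. Finally, note the paper also gives a second, independent proof by expanding $f_0$ as $\sum_{k\ge0}\frac{(q;q^2)_k}{(q^2;q^2)_k}t^k$ and matching it against the coefficient extraction of the infinite product via an exercise in Macdonald; your proposal does not cover that route, but it does not need to.
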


\begin{proof}
Let us provide two proofs of this formula.
First, note that $f_0(q,t)$ of Equation~\eqref{eq:fm-def} and the product on
the right-hand side of Equation~\eqref{eq:f0} are polynomials in $t$ with
constant term $1$ and both satisfy the functional
equation~\eqref{eq:f0-func}). Indeed,
\begin{equation}
 f_0(q,t) f_0(q, q t) = \frac{(1- q t) (1 - q^3 t) \ldots}{(1- t)(1 - q^2 t)
\ldots } \times \frac{(1- q^2 t) (1 - q^4 t) \ldots}{(1- q t)(1 - q^3 t)
\ldots } = \frac1{1-t}.
\end{equation}

Formula~\eqref{eq:f0} can also be proven directly by comparing the
coefficients in the sum and product expressions.
\begin{enumerate}
\item Rewrite the sum
\begin{align*}
& f_0(q,t)
 = \sum_{k=0}^{\infty} \frac1{(-q;q)_k^2 } \qbinom{2k}{k} t^k
 = \sum_{k=0}^{\infty} \frac1{(-q;q)_k^2 }
     \frac{(q;q)_{2k}}{(q;q)_k^2} t^k  = \sum_{k=0}^{\infty}
      \frac{(q;q)_{2k}}{(q^2;q^2)_k^2}t^k
  =  \sum_{k=0}^{\infty} \frac{(q;q^2)_k}{(q^2;q^2)_k} t^k,
\end{align*}
where we have first used \eqref{eq:Kone} and then \eqref{eq:Kthree} to simplify the expression.
\item Extract the coefficient from the product.
Appeal to the result of Exercise 5, Ch.2 of~\cite{Mac}:
\[
H(t) = \prod_{i=0}^{\infty} \frac{1 - b q^i t}{1 - a q^i t} = \sum_{k \geq 0} \prod_{i=1}^k \frac{a - b q^{i-1}}{1 - q^i} t^k
\]
with 
\[
q \rightarrow q^2, \quad a =1, \quad b = q
\]
so that
\[
 \frac{(qt;q^2)_{\infty}}{(t;q^2)_{\infty}} = \sum_{k \geq 0} \prod_{i=1}^k
\frac{1 - q q^{2i-2}}{1 - q^{2i}} t^k =  \sum_{k \geq 0}
\frac{(q;q^2)_k}{(q^2;q^2)_k} t^k.
\]
\end{enumerate}
\end{proof}

The generating function for the $q$-Catalan numbers is particularly simple.
In order to derive it, recall the convolution of $T_{n,0}(q)$ and
$T_{n,1}(q)$, Equation (\ref{eq:q-cbc-cat}):
\[
\sum_{ k =0}^n \frac{T_{k, 0} ( q )}{ ( -q ; q )_k ^2} \cdot \frac{q^{n-k}}{1
+ q} \frac{T_{n -k ,1}(q)}{ ( -q ; q )_{n -k }( -q ; q )_{n -k +1}}  =
\frac1{(-q;q)_{n+1}^2} T_{n+1,0}(q).
\]

This $q$-convolution formula is equivalent to the following functional
equation in the generating series $f_0$ and $f_1$:
\begin{align*}
& f_0(q,t) f_1(q,q t) = \frac{f_0(q,t)-1}{t}. \\
\end{align*}
Solving this equation for $f_1(q,t)$, we find

\begin{state}
The generating function for the $q$-Catalan numbers satisfies
\begin{equation}
\label{eq:f1}
f_1(q,t) =
  \frac{q \left(f_0\left(q,\frac{t}{q} \right) - 1 \right)}{t
f_0\left(q,\frac{t}{q} \right)},
\end{equation}
where $f_0(q,t)$ is as in Equation~\eqref{eq:f0}.
\end{state}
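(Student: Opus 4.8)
The plan is to treat the functional equation
\[
f_0(q,t)\, f_1(q,q t) = \frac{f_0(q,t)-1}{t}
\]
as the heart of the matter and simply solve it for $f_1$. There are only two things to do: confirm that this functional equation faithfully encodes the convolution~\eqref{eq:q-cbc-cat}, and then perform a change of variable so that the left-hand factor reads $f_1(q,t)$ rather than $f_1(q,qt)$.

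First I would verify the functional equation by comparing the coefficient of $t^n$ on each side. From~\eqref{eq:fm-def} one has $f_0(q,t)=\sum_{k\ge0}\frac{T_{k,0}(q)}{(-q;q)_k^2}\,t^k$ and $f_1(q,t)=\frac1{1+q}\sum_{k\ge0}\frac{T_{k,1}(q)}{(-q;q)_k(-q;q)_{k+1}}\,t^k$, using $(-q;q)_1=1+q$. The substitution $t\mapsto qt$ in the second factor multiplies its degree-$(n-k)$ term by $q^{n-k}$, so the coefficient of $t^n$ in the Cauchy product $f_0(q,t)f_1(q,qt)$ is exactly the left-hand side of~\eqref{eq:q-cbc-cat} written in the rescaled form recalled at the start of this section, and hence equals $\frac{T_{n+1,0}(q)}{(-q;q)_{n+1}^2}$. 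On the right, $\frac{f_0(q,t)-1}{t}$ has coefficient of $t^n$ equal to the coefficient of $t^{n+1}$ in $f_0$, which is the same quantity; subtracting $1=f_0(q,0)$ only removes the constant term and makes the division by $t$ legitimate. This establishes the functional equation.

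Second, to isolate $f_1(q,t)$ I would substitute $t\mapsto t/q$ throughout. The second factor then becomes $f_1\!\left(q,q\cdot\tfrac{t}{q}\right)=f_1(q,t)$, yielding
\[
f_0\!\left(q,\tfrac{t}{q}\right) f_1(q,t) = \frac{q\left(f_0\!\left(q,\tfrac{t}{q}\right)-1\right)}{t},
\]
and dividing by $f_0(q,t/q)$ gives the stated formula. I do not expect a genuine obstacle: once the functional equation is in hand the result is pure algebra. The only steps demanding care are the bookkeeping in the substitution $t\mapsto t/q$---checking that the internal scaling collapses $f_1(q,qt)$ to $f_1(q,t)$ and tracking the factor of $q$ it produces---and the remark that the division is valid, which holds because $f_0(q,t/q)$ is a power series in $t$ with constant term $f_0(q,0)=T_{0,0}(q)/(-q;q)_0^2=1$ and is therefore invertible as a formal power series; this is also clear directly from the product form~\eqref{eq:f0}.
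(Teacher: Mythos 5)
Your proposal is correct and follows the same route as the paper: it converts the convolution identity~\eqref{eq:q-cbc-cat} into the functional equation $f_0(q,t)f_1(q,qt)=\frac{f_0(q,t)-1}{t}$ by coefficient comparison and then solves for $f_1$ via the substitution $t\mapsto t/q$. The paper states these two steps without detail, so your coefficient-extraction check and the remark on invertibility of $f_0(q,t/q)$ as a formal power series simply make explicit what the paper leaves implicit.
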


Note that putting $q=1$ in the previous formula allows one to directly
recover the usual formula for the generating series of the classical Catalan
numbers.

\section{$\gamma$-positivity and Narayana refinements}
\label{sec:gamma}

The notion of $\gamma$-positivity~\cite{FS} is a property of a polynomial
that ensures that it is both unimodal and palindromic.
It requires that the polynomial can be expressed as
\begin{equation}
\label{eqn:eq1}
f(t)=\sum_{k=0}^{\lfloor n/2 \rfloor} \gamma_k t^k(1+t)^{n-2k}
\end{equation}
with nonnegative real values
$\gamma_0$,$\gamma_1$,...,$\gamma_{\lfloor n/2 \rfloor}$.

For instance, Simion and Ullman in~\cite{Simion} prove that the type A
Narayana polynomials
\begin{equation}
\label{eq:Npoly}
\mathcal{N}_n(t) = \sum_{k=0}^{n-1} N_{n,k} t^k
\end{equation}
built from the Narayana numbers
$N_{n,k} = \frac{1}{n} \binom{n}{k+1}\binom{n}{k}$ that refine Catalan
numbers satisfy
\begin{equation}
\label{eq:SU}
\mathcal{N}_n(t) = \sum_{k=0}^{\lfloor\frac{n-1}{2}\rfloor}
            \binom{n-1}{2k} C_k t^k (1 + t)^{n-1-2k}.
\end{equation}

This is case $m=1$ for us. The super Catalan numbers can be directly
refined in a similar way:

\begin{equation}
T_{n,m}(1)
 =  \binom{2m}{m}\binom{n}{m}^{-1} \binom{2n}{n-m}
 =  \binom{2m}{m}\binom{n}{m}^{-1}
    \sum_{k=0}^{n-m} \binom{n}{k} \binom{n}{n-k-m}.
\end{equation}

Accordingly, the $m$-Narayana numbers and their generating functions, the
$m$-Narayana polynomials are

\begin{definition}[\cite{Wachs}]
Let $m$, $n$, and $k$ be three nonnegative integers, such that $m\leq n$
and $k\leq n-m$.
The $m$-Narayana numbers are
\begin{equation}
N^{(m)}_{n,k} = \binom{2m}{m}\binom{n}{m}^{-1} \binom{n}{k} \binom{n}{k+m}.
\end{equation}
The generating function of the $m$-Narayana numbers, the $m$-Narayana
polynomial, is
\begin{equation}
\label{def:mNara}
\mathcal{N}^{(m)}_{n}(1,t) = \sum_{k=0}^{n-m} N^{(m)}_{n,k} t^k.
\end{equation}
\end{definition}

This is the $q=1$ version of the definition that appears in a talk by Wachs (see~\cite{Wachs}) and studied in an unpublished paper by Krattenthaler
and Wachs \cite{K}.
Note that putting $m=1$ gives back the classical Narayana numbers (up
to a factor $2$ as the super Catalan numbers give back the Catalan numbers at
$m=1$ up to the same factor). The same is true for the generating series:
$\mathcal{N}^{(1)}_{n}(1,t)= 2\mathcal{N}_n(t)$.

We then observe that Equation~\eqref{eq:SU} can be generalized to all $m$:

\begin{state}
\label{gamma}
Let $m$ and $n$ be nonnegative integers with $m\leq n$.
The polynomial $\mathcal{N}^{(m)}_{n}(1,t)$ is $\gamma$-positive since it
satisfies
\begin{equation}
\label{eq:mSU}
\mathcal{N}^{(m)}_{n}(1,t) 
 = \sum_{k=0}^{\lfloor\frac{n-\textcolor{red}{m}}{2}\rfloor}
     \binom{n-\textcolor{red}{m}}{2k} T_{k,m}\,
     t^k (1+t)^{n-\textcolor{red}{m}-2k}.
\end{equation}
\end{state}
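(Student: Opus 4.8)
The plan is to reduce \eqref{eq:mSU} to a single polynomial identity in $t$ and then prove that identity by a coefficient–extraction argument. First I would combine the definition \eqref{def:mNara} with the displayed refinement of $T_{n,m}(1)$ to write
\[
\mathcal{N}^{(m)}_{n}(1,t)
 = \binom{2m}{m}\binom{n}{m}^{-1}\sum_{j=0}^{n-m}\binom{n}{j}\binom{n}{j+m}\,t^{j},
\]
so that the whole statement becomes the claim that $\sum_{j}\binom{n}{j}\binom{n}{j+m}t^{j}$ equals $\binom{2m}{m}^{-1}\binom{n}{m}$ times the right-hand side of \eqref{eq:mSU}. Substituting the value $T_{k,m}=\frac{(2m)!(2k)!}{m!\,k!\,(k+m)!}$ from \eqref{def:sC} and simplifying the factorial prefactors — the factors $\binom{2m}{m}$ and $\binom{n}{m}^{-1}$ cancel against those concealed inside $T_{k,m}$ — this collapses to the clean core identity
\[
\sum_{j}\binom{n}{j}\binom{n}{j+m}\,t^{j}
 = \sum_{k}\frac{n!}{k!\,(k+m)!\,(n-m-2k)!}\;t^{k}(1+t)^{\,n-m-2k}.
\]

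To prove the core identity I would read its left-hand side as a single coefficient extraction. Writing $\binom{n}{j+m}=[x^{j+m}](1+x)^{n}$ and summing over $j$ gives $\sum_{j}\binom{n}{j}\binom{n}{j+m}t^{j}=[x^{m}](1+x)^{n}(1+t/x)^{n}=[x^{n+m}](1+x)^{n}(x+t)^{n}$. The key algebraic observation is the factorization $(1+x)(x+t)=x^{2}+(1+t)x+t$, so that I only need the coefficient of $x^{n+m}$ in $\bigl(x^{2}+(1+t)x+t\bigr)^{n}$. Expanding by the multinomial theorem and selecting the monomials of degree $n+m$ in $x$ forces the exponent triple $(k+m,\;n-m-2k,\;k)$ on the three summands $(x^{2},\,(1+t)x,\,t)$; this produces exactly $\sum_{k}\frac{n!}{(k+m)!\,(n-m-2k)!\,k!}\,t^{k}(1+t)^{n-m-2k}$, the desired right-hand side. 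Finally I would verify the prefactor bookkeeping, namely $\binom{2m}{m}\binom{n}{m}^{-1}\frac{n!}{k!\,(k+m)!\,(n-m-2k)!}=\binom{n-m}{2k}T_{k,m}$, which is a routine factorial simplification, to recover \eqref{eq:mSU} in its stated form. The $\gamma$-positivity is then immediate, since each coefficient $\binom{n-m}{2k}T_{k,m}$ is a product of a binomial coefficient and a (nonnegative) super Catalan number.

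I expect the only genuine obstacle to be spotting the right generating-function encoding together with the factorization $(1+x)(x+t)=x^{2}+(1+t)x+t$; once that is in hand the multinomial expansion is mechanical, and the powers of $(1+t)$ appear automatically, which is precisely what renders the expansion manifestly $\gamma$-positive. As an alternative to the coefficient-extraction step, one may instead equate coefficients of $t^{j}$ on both sides directly and reduce the resulting balanced binomial sum to the Chu–Vandermonde evaluation of ${}_2\phi_1$ at $q=1$ (that is, Gauss's summation of ${}_2F_1(-j,-(n-m-j);m+1;1)$); but the multinomial argument seems cleaner and avoids the inverse binomial coefficients that arise along that route.
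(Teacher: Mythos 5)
Your proof is correct, but it follows a genuinely different route from the paper's. The paper proves \eqref{eq:mSU} by comparing coefficients of $t^k$ on both sides, which reduces the statement to the identity
\begin{equation*}
N^{(m)}_{n,k} \;=\; \sum_{s=0}^{k}\binom{n-m}{2s}\,T_{m,s}\,\binom{n-m-2s}{k-s},
\end{equation*}
and this identity is not proved on the spot: it is obtained as the $q=1$ specialization of the $q$-identity \eqref{eq:Naq-pos}, established later in Section~\ref{sec-Na_pos} by $q$-factorial manipulation and a final application of the $q$-Vandermonde summation. You instead package the whole Narayana polynomial into a single coefficient extraction, $\sum_{j}\binom{n}{j}\binom{n}{j+m}t^{j}=[x^{n+m}](1+x)^{n}(x+t)^{n}$, and exploit the factorization $(1+x)(x+t)=x^{2}+(1+t)x+t$ together with the multinomial theorem; the forced exponent triple $(k+m,\,n-m-2k,\,k)$ then produces the $\gamma$-expansion directly, and your prefactor bookkeeping $\binom{2m}{m}\binom{n}{m}^{-1}\frac{n!}{k!\,(k+m)!\,(n-m-2k)!}=\binom{n-m}{2k}T_{k,m}$ checks out. (Your closing remark about equating coefficients of $t^{j}$ and invoking Chu--Vandermonde at $q=1$ is in fact essentially the paper's route.) The trade-off: your argument is elementary, self-contained, and conceptually explains where the powers of $(1+t)$ come from, whereas the paper's detour through \eqref{eq:Naq-pos} buys the $q$-analogue simultaneously --- which the paper needs anyway for the positivity of the $q$-$m$-Narayana numbers --- while the factorization trick does not obviously $q$-deform. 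One small point of rigor worth stating explicitly in your write-up: the extraction $[x^{m}](1+x)^{n}(1+t/x)^{n}$ takes place in Laurent polynomials over $\mathbb{Z}[t]$, where it is legitimate; also note that $\gamma$-positivity requires the positivity of the $T_{k,m}$, which is classical but should be cited rather than treated as obvious.
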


While this statement is a corollary of a theorem in the presentation of Wachs,
we shall prove it in our own way.

\begin{proof}
Comparing the coefficients of $t^k$ on both sides of (\ref{eq:mSU}), the
statement is equivalent to the identity
\begin{equation}
\label{eq:gamma-id}
N^{(m)}_{n,k} =  \sum_{s=0}^{k}\binom{n-m}{2s} T_{m,s} \binom{n-m-2s}{k-s}.
\end{equation}

The validity of this identity will be established as a corollary of our
$q$-version of this identity in Section~\ref{sec-Na_pos},
Equation~\eqref{eq:Na-pos}.

Thus, we have that $\mathcal{N}^{(m)}_{n}(t)$ is $\gamma$-positive for all $m$.
\end{proof}

In~\cite{BlancoPetersen}, Blanco and Petersen give a combinatorial proof
in our case $m=0$ of Equation~\eqref{eq:mSU} as a symmetric boolean
decomposition of the lattice $NC^B(n)$ of type B noncrossing partitions.
That proves the $\gamma$-positivity of type B Narayana polynomials. They also
mention that it would be interesting to have such a decomposition in the type
D case. We shall provide a proof that relies on a different decomposition of
$NC^B(n)$ that has the benefit of being compatible with type D noncrossing
partitions. Note that type D is unrelated to the super Catalan family.

\subsection{Type B noncrossing partitions}

Simion and Ullman~\cite{Simion} proved $\gamma$-positivity in their study of
the non-crossing partition lattice. In~\cite{Reiner_1997}, Reiner introduced
an analog of noncrossing partitions for type B with the following definition.

\begin{definition}
A {type B partition} is a partition $\pi$
of $[\pm n]=\{+1,\dots,+n,-1,\dots,-n\}$ such that if
$\{i_1,i_2,\dots,i_k\}$ is a block of $\pi$ then $\{-i_1,-i_2,\dots,-i_k\}$ is
also a block of $\pi$ and there is at most one block containing both $i$ and
$-i$ for some $i$. If such a block exists, it is called the
{zero-block} of $\pi$.
\end{definition}

The natural representation of a type B partition (Figure \ref{bpart}) is
obtained by placing the numbers $1,2,\dots,n,-1,-2,\dots,-n$ clockwise on a
circle and drawing chords between successive numbers that are in the same
block. If none of the chords cross, then the type B partition is said to be
noncrossing and belongs to the set $NC^B(n)$ of all noncrossing partitions of
type B. Just as in type A, the type B noncrossing partitions form a lattice
under the refinement order. This lattice is ranked by the function
$r(\pi)=n-nbl(\pi)/2$ where $nbl(\pi)$ is the number of nonzero blocks
in~$\pi$.

\def\f {2}
\begin{figure}[ht]
\begin{center}
\begin{tikzpicture}[scale=1]
\draw (0,0) circle (\f cm);
\foreach \a in {1,2,3}
		\pgfmathsetmacro\x{int(4-\a))}
        \filldraw ({\f*cos(30*\a)}, {\f*sin(30*\a)}) circle (0.09cm) node[above right](tr){\x};
\foreach \a in {-2,-1,0}
		\pgfmathsetmacro\x{int(4-\a))}
        \filldraw ({\f*cos(30*\a)}, {\f*sin(30*\a)}) circle (0.09cm) node[below right](sr){\x};
\foreach \a in {4,5,6}
		\pgfmathsetmacro\y{int(\a-10))}
        \filldraw ({\f*cos(30*\a)}, {\f*sin(30*\a)}) circle (0.09cm) node[above left] (ur){\y};
\foreach \a in {7,8,9}
		\pgfmathsetmacro\y{int(\a-10))}
        \filldraw ({\f*cos(30*\a)}, {\f*sin(30*\a)}) circle (0.09cm) node[below left](vr){\y};
\def\i{3}
\def\j{7}
\draw ({\f*cos(30*\i)}, {\f*sin(30*\i)})--({\f*cos(30*\j)}, {\f*sin(30*\j)});
\draw ({\f*cos(30*(\i+6))}, {\f*sin(30*(\i+6))})--({\f*cos(30*(\j+6))}, {\f*sin(30*(\j+6))});
\def\j{4}
\draw ({\f*cos(30*\i)}, {\f*sin(30*\i)})--({\f*cos(30*\j)}, {\f*sin(30*\j)});
\draw ({\f*cos(30*(\i+6))}, {\f*sin(30*(\i+6))})--({\f*cos(30*(\j+6))}, {\f*sin(30*(\j+6))});
\def\i{7}
\draw ({\f*cos(30*\i)}, {\f*sin(30*\i)})--({\f*cos(30*\j)}, {\f*sin(30*\j)});
\draw ({\f*cos(30*(\i+6))}, {\f*sin(30*(\i+6))})--({\f*cos(30*(\j+6))}, {\f*sin(30*(\j+6))});
\def\i{2}
\def\j{8}
\draw ({\f*cos(30*\i)}, {\f*sin(30*\i)})--({\f*cos(30*\j)}, {\f*sin(30*\j)});
\end{tikzpicture}
\end{center}
\caption{\label{bpart}The circular representation of the noncrossing partition
of type B $(1,-3,-6),(-1,3,6),(2,-2),(4),(-4),(5),(-5)$.}
\end{figure}
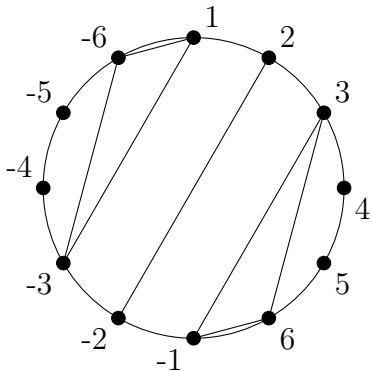

\medskip
The cardinality of $NC^B(n)$ is given by the central binomial coefficient
$\binom{2n}{n}$ and the number of elements of rank $k$ is given by
$\binom{n}{k}^2$ which are the type B ($m=0$) Narayana numbers.

\subsection{Gamma-positivity of type B ($m=0$) Narayana polynomials}

In order to prove the $\gamma$-positivity of the classical Narayana
polynomials, Simion and Ullman use a bijection between noncrossing partitions
and a set of words equivalent to 2-colored Motzkin words. We use a similar
method to prove the type B case.

\begin{definition}
A {2-path} of length $n$ is a lattice path between $(0,0)$ and $(0,n)$
that uses diagonal up steps $(1,1)$, diagonal down steps $(1,-1)$ and two
horizontal steps $(1,0)$, one being wavy, the other being straight. In other
words, a 2-path is a 2-Motzkin path that does not have the constraint of
remaining above the $x$-axis. We construct a map between $NC^B(n)$ and the set
of 2-paths of length $n$ in the following way.
\end{definition}

\medskip
Given the circular representation of a noncrossing type B partition $\pi$, for
each nonzero block with more than one edge, remove the longest edge. This does
not change the partition since this edge is the result of transitivity of the
other edges of its block. For each remaining edge, direct it according to the
shortest clockwise path between its two endpoints: if an edge has $i$ and $j$
as its endpoints and the clockwise path from $i$ to $j$ is shorter than the
clockwise path from $j$ to $i$ then the edge is directed from $i$ to $j$. If
both paths have the same length, replace it by two arcs going in both
directions.

\medskip
Now we build $\phi(\pi)=w_{1}w_{2}...w_{n}$ the 2-path associated with $\pi$
where for $1 \leq i \leq n$ 
\begin{itemize}
\item[(i)] If $i$ is the starting point of an arc that does not go to $i+1$
(or $-1$ if $i=n$), then $w_{i}$ is an up diagonal step,
\item[(ii)] If $i+1$ (or $-1$ if $i=n$) is the endpoint of an arc not coming
from $i$, then $w_{i}$ is a down diagonal step,
\item[(iii)] If there is an arc between $i$ and $i+1$ (or $-1$ if $i=n$), then
$w_{i}$ is a wavy horizontal step,
\item[(iv)] If there is no arc starting from $i$ and there is no arc ending in
$i+1$ (or $-1$ if $i=n$), then $w_{i}$ is a straight horizontal step.
\end{itemize}

\def\f {2} \def\d {7} \def\dy {-5}
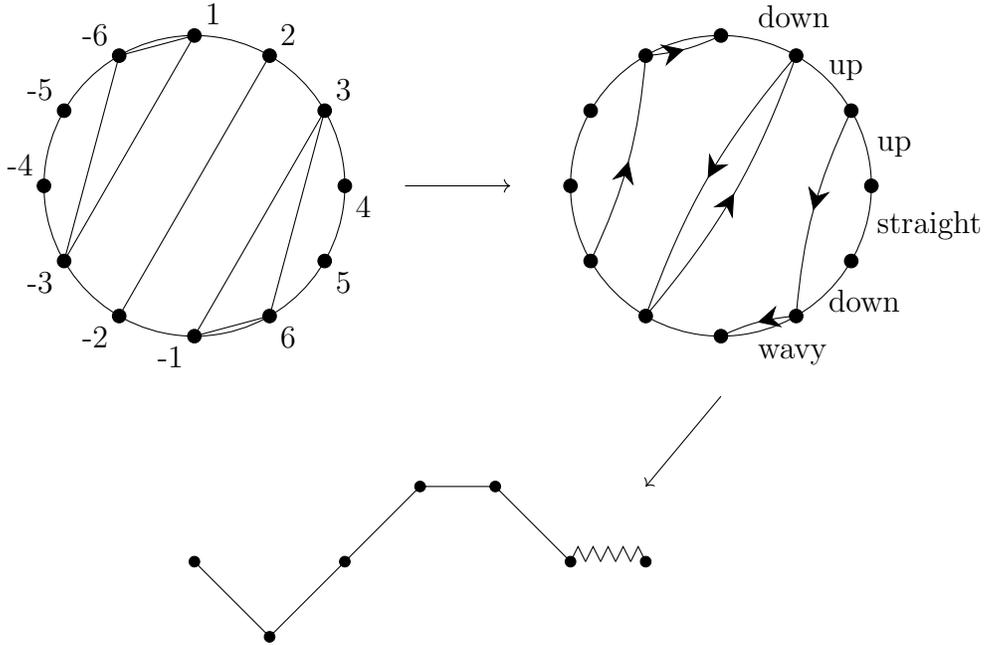
\begin{figure}[ht]
\begin{center}
\begin{tikzpicture}[scale=1]
\draw (0,0) circle (\f cm);
\foreach \a in {1,2,3}
		\pgfmathsetmacro\x{int(4-\a))}
        \filldraw ({\f*cos(30*\a)}, {\f*sin(30*\a)}) circle (0.09cm) node[above right](tr){\x};
\foreach \a in {-2,-1,0}
		\pgfmathsetmacro\x{int(4-\a))}
        \filldraw ({\f*cos(30*\a)}, {\f*sin(30*\a)}) circle (0.09cm) node[below right](sr){\x};
\foreach \a in {4,5,6}
		\pgfmathsetmacro\y{int(\a-10))}
        \filldraw ({\f*cos(30*\a)}, {\f*sin(30*\a)}) circle (0.09cm) node[above left] (ur){\y};
\foreach \a in {7,8,9}
		\pgfmathsetmacro\y{int(\a-10))}
        \filldraw ({\f*cos(30*\a)}, {\f*sin(30*\a)}) circle (0.09cm) node[below left](vr){\y};
\def\i{3}
\def\j{7}
\draw ({\f*cos(30*\i)}, {\f*sin(30*\i)})--({\f*cos(30*\j)}, {\f*sin(30*\j)});
\draw ({\f*cos(30*(\i+6))}, {\f*sin(30*(\i+6))})--({\f*cos(30*(\j+6))}, {\f*sin(30*(\j+6))});
\def\j{4}
\draw ({\f*cos(30*\i)}, {\f*sin(30*\i)})--({\f*cos(30*\j)}, {\f*sin(30*\j)});
\draw ({\f*cos(30*(\i+6))}, {\f*sin(30*(\i+6))})--({\f*cos(30*(\j+6))}, {\f*sin(30*(\j+6))});
\def\i{7}
\draw ({\f*cos(30*\i)}, {\f*sin(30*\i)})--({\f*cos(30*\j)}, {\f*sin(30*\j)});
\draw ({\f*cos(30*(\i+6))}, {\f*sin(30*(\i+6))})--({\f*cos(30*(\j+6))}, {\f*sin(30*(\j+6))});
\def\i{2}
\def\j{8}
\draw ({\f*cos(30*\i)}, {\f*sin(30*\i)})--({\f*cos(30*\j)}, {\f*sin(30*\j)});


\draw (\d,0) circle (\f cm);
\foreach \a in {-2,-1,...,9}
        \filldraw ({\d+\f*cos(30*\a)}, {\f*sin(30*\a)}) circle (0.09cm);
        
\begin{scope}[decoration={
    markings,
    mark=at position 0.5 with {\arrow{Stealth[length=3mm, width=3mm]}}}
    ]
	\def\i{3}
	\def\j{4}
	\draw [postaction={decorate}]({\d+\f*cos(30*\j)}, {\f*sin(30*\j)})to[bend right=10] ({\d+\f*cos(30*\i)}, {\f*sin(30*\i)});
\draw  [postaction={decorate}] ({\d+\f*cos(30*(\j+6))}, {\f*sin(30*(\j+6))})to[bend right=10]({\d+\f*cos(30*(\i+6))}, {\f*sin(30*(\i+6))});
\def\i{7}
\draw [postaction={decorate}]({\d+\f*cos(30*\i)}, {\f*sin(30*\i)})to [bend right=10]({\d+\f*cos(30*\j)}, {\f*sin(30*\j)});
\draw[postaction={decorate}]({\d+\f*cos(30*(\i+6))}, {\f*sin(30*(\i+6))})to [bend right=10]({\d+\f*cos(30*(\j+6))}, {\f*sin(30*(\j+6))});
\def\i{2}
\def\j{8}
\draw  [postaction={decorate}]({\d+\f*cos(30*\i)}, {\f*sin(30*\i)})to [bend right=10]({\d+\f*cos(30*\j)}, {\f*sin(30*\j)});
\draw  [postaction={decorate}]({\d+\f*cos(30*\j)}, {\f*sin(30*\j)})to [bend right=10]({\d+\f*cos(30*\i)}, {\f*sin(30*\i)});
\end{scope}

\draw ({\d+\f*cos(80)}, {\f*sin(80)}) node[above right]{down};
\draw ({\d+\f*cos(50)}, {\f*sin(50)}) node[right]{up};
\draw ({\d+\f*cos(15)}, {\f*sin(15)}) node[right]{up};
\draw ({\d+\f*cos(-15)}, {\f*sin(-15)}) node[right]{straight};
\draw ({\d+\f*cos(-50)}, {\f*sin(-50)}) node[right]{down};
\draw ({\d+\f*cos(-80)}, {\f*sin(-80)}) node[below right]{wavy};


\filldraw[black] (0,\dy) circle (2pt);
\filldraw (0,\dy) -- ++ (1,-1)circle (2pt) -- ++ (1,1)circle (2pt)
-- ++ (1,1)circle (2pt)-- ++ (1,0)circle (2pt)-- ++ (1,-1)circle (2pt);
\foreach \a in {0,2,...,8}
	\draw ({5+(\a)/10},\dy) -- ({5+(\a+1)/10},{\dy+1/5})--({5+(\a+2)/10},\dy);
\filldraw[black] (6,\dy) circle (2pt);


\draw[->] (2.8,0) -- (\d-2.8,0);
\draw[->] (\d,-2.8) -- (6,\dy+1);
\end{tikzpicture}
\end{center}
\caption{\label{Upsilon}Illustration of the bijection $\phi$ on an example in
$NC^B(6)$.}
\end{figure}

\begin{lem}
Let $n \geq 1$. The map $\phi$ is well-defined and it is a bijection between
$NC^B(n)$ and the set of $2$-paths of length $n$.
\end{lem}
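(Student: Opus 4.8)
The plan is to exploit the half-turn symmetry of type B partitions so as to reduce the statement to an (almost) standard correspondence between noncrossing chord diagrams and lattice paths. Write $\rho$ for the antipodal rotation $i\mapsto -i$ of the $2n$ boundary points; by definition every $\pi\in NC^B(n)$ is $\rho$-invariant. First I would record the structural effect of the preprocessing. Removing the longest chord of a nonzero block leaves the block connected, so it does not change $\pi$; moreover a nonzero block contains no antipodal pair, and noncrossingness then forces it to lie in an open half-circle (otherwise $B$ and $\rho(B)$ would link and hence cross), so its longest chord---the one facing the complementary half-circle---is the unique gap exceeding $n$ and the removal is unambiguous. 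After removal each nonzero block becomes a single directed path whose chords all run clockwise through less than half the circle, whereas the zero block keeps all its chords and becomes a directed clockwise cycle, a chord through exactly half the circle giving the bidirectional (diameter) case. Since the longest chords of $B$ and $-B=\rho(B)$ are exchanged by $\rho$ and the zero block is $\rho$-invariant, the resulting oriented chord diagram is itself $\rho$-invariant; this is the feature I would lean on throughout.

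Next I would check that $\phi$ is well defined, i.e. that each $i\in\{1,\dots,n\}$ falls under exactly one of (i)--(iv). Exhaustiveness is immediate from the wording. For exclusiveness the only dangerous overlap is (i) with (ii): if $i$ were the tail of a chord with head past $i+1$ while $i+1$ were simultaneously the head of a chord with tail before $i$, these two chords would interleave and hence cross, contradicting noncrossingness; and an adjacent chord at $i$ (case (iii)) uses up the unique out-chord at $i$ and the unique in-chord at $i+1$, excluding (i) and (ii). To see that $\phi(\pi)$ returns to height $0$ I would use the $\rho$-invariance: reading the whole circle clockwise yields $w_1\cdots w_n$ on the first half and, by $\rho$-invariance, the same word on the second half, so the full circular word is $ww$. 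Over the full closed diagram the up and down steps balance, since each non-adjacent oriented chord contributes one up and one down (a diameter two of each) while adjacent chords contribute neither; hence each half already has equally many up and down steps, and $\phi(\pi)$ is a genuine $2$-path.

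For bijectivity I would combine injectivity with a cardinality count. The number of $2$-paths of length $n$ is $[x^0](x+2+x^{-1})^n=[x^0](x^{1/2}+x^{-1/2})^{2n}=\binom{2n}{n}=|NC^B(n)|$, so it suffices to invert $\phi$. Given a $2$-path $w$, I would form the cyclic word $ww$ on the $2n$ boundary points, turn each wavy step into the adjacent chord, and match the up and down steps into clockwise-oriented chords by the standard height-function pairing (cut the cyclic word at a point of minimal height, then match up/down steps like nested parentheses); the orientation and non-degeneracy make this matching unique, even though cyclic noncrossing matchings are not unique in general. Taking the transitive closure of the resulting chords recovers the blocks, the $ww$-symmetry guarantees the partition is $\rho$-invariant and hence of type B, and transitive closure exactly undoes the deletion of longest chords. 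This reconstructs $\pi$, so $\phi$ is injective and therefore a bijection.

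The step I expect to be the main obstacle is the well-definedness rather than the counting: proving that the longest chord of a nonzero block is genuinely unique, which is precisely where noncrossingness and the absence of antipodal pairs must be used carefully, and pinning down the inverse matching in the type B setting. The latter is exactly where type B departs from the classical Simion--Ullman situation: the first-half path may dip below the axis, and those below-axis excursions correspond to chords crossing between the two half-circles, reconstructed through the antipodal identification rather than by ordinary nesting. Handling the zero block---kept as a full oriented cycle and responsible for the bidirectional diameter chords---is the delicate point with no analogue in type A.
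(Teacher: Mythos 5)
The paper states this lemma without any proof (it is asserted and then used to derive the boolean decomposition), so there is no argument of the authors' to compare yours against; I can only assess your proposal on its own terms. Your overall plan is sound and its main components are correct: the separation argument showing that every nonzero block lies in an open half-circle (hence has a unique longest edge, and after its removal all remaining arcs are clockwise of span at most $n$); the crossing argument ruling out the overlap of cases (i) and (ii), together with the degree argument for (iii); the balance of up and down steps via $\rho$-invariance of the oriented diagram (the full circular word is $ww$, and the total numbers of up and down letters agree, so each half is balanced); and the count of $2$-paths, $[x^0](x+2+x^{-1})^n=\binom{2n}{n}=|NC^B(n)|$, where the cardinality of $NC^B(n)$ is Reiner's theorem quoted in the paper just before the lemma, so invoking it is legitimate and reduces bijectivity to injectivity.

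The one genuine gap is the step you yourself flag: the uniqueness of the matching in the reconstruction, which is the entire content of injectivity and which you assert (``the orientation and non-degeneracy make this matching unique'') rather than prove. The claim is true, but it needs an argument along the following lines. Suppose an arc of the diagram of some $\pi$ with $\phi(\pi)=w$ runs clockwise from point $p$ to point $q+1$ (up at position $p$, down at position $q$, span $q+1-p\le n$). Then any arc incident to a point strictly under it must stay strictly under it: an escaping arc would either cross it (forbidden), or attach at $p$ or at $q+1$ (forbidden, since in- and out-degrees are at most one and those slots are taken), or run ``backwards'' around the long way, giving span greater than $n$ (forbidden by the orientation rule). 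Consequently the subword of $ww$ strictly between positions $p$ and $q$ is balanced and its heights stay above the base level, i.e.\ $q$ is the \emph{first} return to that level after $p$. Hence the matching induced by \emph{any} preimage of $w$ is forced to be the first-return matching of the cyclic word $ww$ --- which coincides with your ``cut at a minimum of the height function and nest'' pairing, precisely because no arc can span a minimal-height position. With this lemma in place, $w$ determines the oriented diagram (wavy letters give the adjacent arcs, the first-return matching gives the rest), the diagram determines $\pi$ as the partition into connected components, and injectivity, hence bijectivity, follows as you outline.
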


As in type A, this bijection gives a decomposition of $NC^B$ into boolean
lattices resulting in the following formula.

\begin{state}
Let $n \geq 1$. We have
\begin{equation}
\sum_{i=0}^{n}N^{(0)}_{n,i} t^i
 = \sum_{k=0}^{\lfloor n/2 \rfloor} \binom{n}{2k}\binom{2k}{k}t^k(1+t)^{n-2k}.
\end{equation}
\end{state}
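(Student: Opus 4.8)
The plan is to read the identity off the bijection $\phi$ of the preceding Lemma rather than to manipulate binomial sums directly, exactly paralleling the Simion--Ullman argument in type A. First I would observe that the left-hand side is nothing but the rank generating function of the lattice $NC^B(n)$: specializing the definition $N^{(m)}_{n,k}=\binom{2m}{m}\binom{n}{m}^{-1}\binom{n}{k}\binom{n}{k+m}$ at $m=0$ gives $N^{(0)}_{n,i}=\binom{n}{i}^2$, and since the number of elements of rank $i$ in $NC^B(n)$ is $\binom{n}{i}^2$, we have $\sum_i N^{(0)}_{n,i}t^i=\sum_{\pi\in NC^B(n)}t^{r(\pi)}$. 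Transporting this sum through the bijection $\phi$, the goal becomes to evaluate $\sum_w t^{r(\phi^{-1}(w))}$ over all $2$-paths $w$ of length $n$.

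The core step is to express the rank as a statistic on the path. I would prove that $r(\pi)$ equals the number of up-diagonal steps of $\phi(\pi)$ plus the number of wavy horizontal steps; writing $u,d,w,s$ for the numbers of up, down, wavy, and straight steps (so $u=d=:k$ and $u+d+w+s=n$), this reads $r(\pi)=k+w$. This is consistent with the extreme cases: the all-straight path (no arcs) corresponds to the partition into singletons, of rank $0$, whereas the all-wavy path corresponds to the single all-containing block, of rank $n$. Granting the identity, I would then partition the set of $2$-paths according to their underlying \emph{skeleton}, obtained by forgetting the wavy/straight distinction on level steps; a skeleton is a height-unconstrained Motzkin-type path with equal numbers of up and down steps.

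Fixing a skeleton with $2k$ diagonal steps, and hence $n-2k$ level steps, its fiber under the forgetful map is obtained by coloring each level step independently wavy or straight, and is therefore a Boolean lattice $B_{n-2k}$. Inside it $k$ is constant while $w$ ranges over $0,\dots,n-2k$, so by the rank formula $r=k+w$ its contribution to the generating function is $t^k(1+t)^{n-2k}$: a symmetric Boolean sublattice with bottom at rank $k$ and top at rank $n-k$. Counting skeletons with $2k$ diagonal steps gives $\binom{n}{2k}$ choices of diagonal positions times $\binom{2k}{k}$ arrangements of $k$ ups and $k$ downs, and summing $\binom{n}{2k}\binom{2k}{k}\,t^k(1+t)^{n-2k}$ over $k$ produces the right-hand side.

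The main obstacle is the rank-to-statistic identity $r(\pi)=k+w$. Establishing it requires unwinding the construction of $\phi$ --- the removal of the longest edge in each block, the clockwise orientation of the remaining edges, and the step-reading rules (i)--(iv) --- so as to track exactly how each block contributes to the four step counts, and in particular to verify that the number $nbl(\pi)$ of nonzero blocks satisfies $nbl(\pi)/2=k+s$. I expect this bookkeeping, rather than the final summation, to be where all the work lies; once it is in place the Boolean decomposition and the count of skeletons are immediate.
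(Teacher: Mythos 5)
Your proposal is correct and follows essentially the same route as the paper: both identify $\sum_i N^{(0)}_{n,i}t^i$ with the rank generating function of $NC^B(n)$, transport it through $\phi$, use the statistic (rank $=$ number of up steps $+$ number of wavy steps), and decompose the set of $2$-paths into Boolean classes indexed by what you call skeletons (the paper's equivalence relation ``same up steps and same down steps''), yielding $\gamma_k=\binom{n}{2k}\binom{2k}{k}$. Even the point you flag as the main obstacle --- verifying the rank-to-statistic identity --- is treated the same way in the paper, which dispatches it with ``one easily checks.''
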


\begin{proof}
Let $n\geq 1$ and let $P(n)$ be the set of 2-paths of length $n$. The
bijection $\phi$ allows $P(n)$ to inherit a structure of ranked lattice from
the lattice of $NC^B(n)$.

Let $p \in P(n)$.
One easily checks that the number of up steps plus the number of wavy steps in
$p$ is equal to the rank of $p$ in the lattice.
Now let $p$ and $p'$ be in $P(n)$. We define the equivalence relation $p \sim
p'$ if $p$ and $p'$ have same set of up steps and same set of down steps.
Let $p=p_1p_2 \dots p_{n-1}$ and $p'=p'_1p'_2 \dots p'_{n-1}$ such
that $p \sim p'$. Then $p'$ covers $p$ if and only if there is $1 \leq i <n$
such that $p_i$ is a straight horizontal step, $p'_i$ is a wavy step and
$p_j=p'_j$ for $j \neq i$. This means that if $C$ is an equivalence class for
this relation then $C$ is a boolean lattice with its minimal element being a
$2$-path without any wavy steps and its maximal element being the same path
with all the straight steps replaced by wavy steps. Let $p$ be such a minimal
element, and let $k$ be its number of up steps. Then $p$ is of rank $k$ and
the maximal element of its class is of rank $n-k$.

Therefore the type B Narayana polynomials can be written as a sum of $\gamma_k
t^k(1+t)^{n-1-2k}$ where $\gamma_k$ is the number of $2$-paths of length $n$
with $k$ up steps and not any wavy steps. Such paths have $\binom{n}{2k}$
possibilities for the positions of the up and down steps and $\binom{2k}{k}$
configurations for each of those positions, so
$\gamma_k=\binom{n}{2k}\binom{2k}{k}$ which concludes the proof of the
formula.
\end{proof}

\subsection{The type D case}

The analogue of noncrossing partitions in type D can be defined as a
sublattice of the noncrossing partitions in type B. In fact, a type D
partition is a type B partition in which the zero block is not made of a
single pair $\{-i,i\}$ and $NC^D$ is the sublattice of $NC^B$ induced by these
partitions.
Let $i \leq n$. Reiner also defines $NC^B(n,i)$ as the sublattice of $NC^B$
consisting of the partitions with zero-block $\{-i,i\}$ and gives the
following decomposition of the lattice.

\begin{state}[Proposition 9 in \cite{Reiner_1997}]
\label{dec}
Let $n \geq 1$, we have

\begin{equation}
NC^B(n) = \bigsqcup_{i \leq  n} NC^B(n,i) \sqcup NC^D(n)
\end{equation}

with $NC^B(n,i) \cong NC(n-1)$.
\end{state}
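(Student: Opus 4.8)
The plan is to treat the set-level decomposition and the lattice isomorphism separately, since the former is essentially a case distinction while the latter carries the real content.

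First I would establish the disjoint union as a trichotomy on the zero-block. By definition every $\pi \in NC^B(n)$ has at most one zero-block, so exactly one of three mutually exclusive situations occurs: $\pi$ has no zero-block at all; $\pi$ has a zero-block properly containing an antipodal pair, which by the $\pm$-symmetry of type B partitions forces it to have size at least $4$; or $\pi$ has a zero-block equal to a single antipodal pair $\{i,-i\}$. The first two cases are exactly the defining condition of a type D partition, so together they account for $NC^D(n)$; the third places $\pi$ in $NC^B(n,i)$ for the unique index $i$ read off from its zero-block. Since a partition cannot have two distinct zero-blocks, the sets $NC^B(n,i)$ are pairwise disjoint and disjoint from $NC^D(n)$, which yields the claimed $\bigsqcup$.

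For the isomorphism $NC^B(n,i) \cong NC(n-1)$ I would argue in the circular representation. When the zero-block is exactly $\{i,-i\}$ it is drawn as the single chord joining the antipodal points $i$ and $-i$, that is, a diameter, splitting the circle into two open arcs. The noncrossing condition forbids any other chord from crossing this diameter, so every nonzero block lies entirely inside one of the two arcs. Moreover the clockwise arc from $i$ to $-i$ consists of the $n-1$ points $i+1,\dots,n,-1,\dots,-(i-1)$, no two of which are negatives of each other, so no block contained in a single arc can itself be a zero-block. I would then define $\Phi\colon NC^B(n,i) \to NC(n-1)$ by restricting $\pi$ to this arc, obtaining a genuine type A noncrossing partition of its $n-1$ points. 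The inverse takes a noncrossing partition of the arc, reflects it through the center via the involution $x\mapsto -x$ to fill the opposite arc, and adjoins the fixed zero-block $\{i,-i\}$; because an arc-block and its reflection live in disjoint arcs, no block equals its own reflection and the reconstruction is a valid $\pm$-symmetric type B partition, so $\Phi$ and this construction are mutually inverse.

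I expect the main obstacle to be the order-theoretic part rather than the bijection. One must confirm that the refinement order and the induced meet and join on $NC^B(n,i)$ restrict exactly to the type A operations on the arc: merging or splitting nonzero blocks on the chosen arc corresponds precisely to the same operation in $NC(n-1)$, with the zero-block and the reflected arc carried along automatically. The delicate point is to verify that forming joins never enlarges the zero-block nor forces two symmetric arc-blocks to merge across the diameter; that such a merge is always blocked by the noncrossing condition is exactly what makes $NC^B(n,i)$ a sublattice and shows $\Phi$ to be an isomorphism of ranked lattices.
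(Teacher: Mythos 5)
The paper offers no proof of this proposition to compare against: it is imported verbatim from Reiner (Proposition~9 of \cite{Reiner_1997}) and used as a black box to derive the type D $\gamma$-positivity formula. Your argument is therefore doing something the paper does not, namely supplying a self-contained proof, and it is essentially correct --- indeed it reconstructs the standard argument. The trichotomy is right: by the $\pm$-symmetry of blocks, a zero-block is a union of antipodal pairs, so it is absent, equal to a single pair $\{i,-i\}$, or of size at least $4$, and since the paper's definition of $NC^D(n)$ is exactly ``zero-block not a single pair,'' the three cases partition $NC^B(n)$ as claimed. The bijection is also sound: the zero-block $\{i,-i\}$ is drawn as a diameter, noncrossingness confines every nonzero block to one of the two open arcs, the chosen arc carries $n-1$ points containing no antipodal pair, and restriction and symmetric reconstruction are mutually inverse.

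The one place your write-up misjudges the difficulty is the order-theoretic step, which you defer to as ``the main obstacle.'' For the statement as given, the sublattice/join issue you raise is not actually needed: it suffices to check that your bijection and its inverse preserve the refinement order, and this is immediate. If $\pi \leq \pi'$ in $NC^B(n,i)$, a nonzero block of $\pi$ (lying in the chosen arc, containing neither $i$ nor $-i$) cannot sit inside the zero-block $\{i,-i\}$ of $\pi'$, hence sits inside a nonzero block of $\pi'$ meeting, and therefore contained in, the same arc; conversely, refinement of the restrictions propagates to the reflected arc by symmetry and the zero-blocks coincide. A bijection between two lattices that is order-preserving in both directions is automatically a lattice isomorphism, so $\Phi$ is an isomorphism without any analysis of how joins are computed in the ambient lattice $NC^B(n)$. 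The sublattice property (which the surrounding text of the paper does assert, following Reiner) can then be checked separately along the lines you sketch --- meets are common refinements, and the zero-block of any join of two elements of $NC^B(n,i)$ is squeezed between $\{i,-i\}$ and the zero-block of your reconstructed upper bound --- but it is a supplement to, not a prerequisite for, the isomorphism $NC^B(n,i)\cong NC(n-1)$.
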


Type D analogues of Catalan and Narayana numbers follows from this
decomposition:

\begin{equation}
Cat^D(n)= \binom{2n}{n} - \binom{2(n-1)}{n-1},
\end{equation}

\begin{equation}
N^D(n,k) = \binom{n}{k}^2 - \frac{n}{n-1} \binom{n-1}{k}\binom{n-1}{k-1}.
\end{equation}

The decomposition of $NC^B$ into boolean lattices given in the previous
Section is a refinement of the one given by Proposition~\ref{dec}. Indeed, let
$1\leq i\leq n$ and $\pi \in NC^B(n,i)$ then its circular representation has
an edge between $i$ and $-i$ which means that its corresponding 2-path has an
up step in position $i$ and a down step in position $i-1$ (or $n$ if $i=1$).
Since all the elements in the class of $\pi$ have the same up and down steps,
they all are in $NC^B(n,i)$. The number of minimal elements of rank $k$ in
$NC^B(n,i)$ is $\binom{n-2}{2(k-1)}C_{k-1}$ since they are the ones with $k$
up steps including a down step in position $i-1$ and an up step in position
$i$. This leads to the $\gamma$-positivity formula for type D Narayana
numbers:

\begin{equation}
\sum_{i=0}^{n}N^D(n,i)t^i
 = \sum_{k=0}^{\lfloor n/2 \rfloor}
  \left(\binom{n}{2k}\binom{2k}{k}-n\binom{n-2}{2(k-1)}C_{k-1} \right)
  t^k(1+t)^{n-2k}.
\end{equation}

\section{A $q$-analog of the $m$-Narayana numbers}
\label{sec:qNar}

We now turn to $q$-Narayana numbers and their generalizations to all $m$. 
As mentioned before, the definitions and some properties we establish here
were presented in a talk from Wachs (see~\cite{Wachs}) that detailed a yet
unpublished joint paper with Krattenthaler~\cite{K}. Our own interest in the subject
and our desire to emphasize the connection inside the whole family of super
Catalan numbers made us look for our own proofs of some statements of their
paper.

The following are known $q$-Narayana refinements for the $q$-central binomial
coefficients and the $q$-Catalan numbers, cases $m=0$ and $m=1$ respectively:
\begin{align}
& T_{n,0}(q) = \qbinom{2n}{n}
  = \sum_{k=0}^n q^{k^2} \qbinom{n}{k}^2  \\
& T_{n,1}(q) = (q+1) C_n(q)
  = \sum_{k=0}^{n -1}q^{k(k+1)}
      \frac{(1-q^2)}{1-q^n} \qbinom{n}{k} \qbinom{n}{k+1}  \qquad
\text{with $n\geq1$}.
\end{align}
More generally, we observe that every member of the super Catalan family
enjoys a similar decomposition.

\begin{state}[\cite{Wachs}]
Let $m$ and $n$ be two nonnegative integers such that $m\leq n$.
Then the $q$-super Catalan numbers decompose in a Narayana-like way as
\begin{equation}
\label{eq:Narade}
T_{n,m}(q)
 = \qbinom{2m}{m}\qbinom{n}{m}^{-1}
   \sum_{k=0}^{n -m} q^{k(k+m)} \qbinom{n}{k} \qbinom{n}{k+m}.
\end{equation}
\end{state}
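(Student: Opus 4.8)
The plan is to cancel the summation‑index‑independent prefactor and thereby reduce \eqref{eq:Narade} to a clean $q$-Chu--Vandermonde convolution, which I would then evaluate with the Gauss summation formula \eqref{eq:Gauss1}, exactly in the spirit of the proof of Proposition~\ref{prop:q-Touchard}. Writing everything in base‑$q$ Pochhammers via \eqref{eq:super}, one has $T_{n,m}(q)=\frac{(q;q)_{2m}(q;q)_{2n}}{(q;q)_m(q;q)_n(q;q)_{n+m}}$ and $\qbinom{2m}{m}\qbinom{n}{m}^{-1}=\frac{(q;q)_{2m}(q;q)_{n-m}}{(q;q)_m(q;q)_n}$, so dividing the former by the latter collapses to $\qbinom{2n}{n-m}$. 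Hence \eqref{eq:Narade} is equivalent to the single identity
\[
\qbinom{2n}{n-m}=\sum_{k=0}^{n-m} q^{k(k+m)}\,\qbinom{n}{k}\,\qbinom{n}{k+m},
\]
and it is this that I would establish.

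To evaluate the sum I would factor out its $k=0$ term $\qbinom{n}{m}$ and write the remainder as $\sum_k (t_k/t_0)$ with $t_k=q^{k(k+m)}\qbinom{n}{k}\qbinom{n}{k+m}$. Using \eqref{eq:binom} together with \eqref{eq:Kone}, the two $q$-binomials contribute the factors $(q^{n-k+1};q)_k/(q;q)_k$ and $(q^{n-k-m+1};q)_k/(q^{m+1};q)_k$. Reversing the powers in the two numerator Pochhammers by \eqref{eq:Kfour} converts them into $(q^{-n};q)_k$ and $(q^{-(n-m)};q)_k$ at the price of explicit $q$-prefactors; the decisive point is that, when these prefactors are combined with the original weight $q^{k(k+m)}$, all the $k^2$ and $mk$ contributions cancel and the signs $(-1)^k$ pair up, leaving the total weight equal to $q^{(2n+1)k}$. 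This identifies the sum as a terminating $q$-hypergeometric series,
\[
\sum_{k=0}^{n-m} q^{(2n+1)k}\,\frac{(q^{-n};q)_k\,(q^{-(n-m)};q)_k}{(q;q)_k\,(q^{m+1};q)_k}
 = {}_2\phi_1\!\left(q^{-n},q^{-(n-m)};q^{m+1};q;q^{2n+1}\right).
\]

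Finally I would apply \eqref{eq:Gauss1} with $N=n-m$, $a=q^{-n}$, and $c=q^{m+1}$; one checks $cq^{N}/a=q^{m+1}\,q^{n-m}\,q^{n}=q^{2n+1}$, so the argument matches exactly and the series evaluates to $(q^{m+n+1};q)_{n-m}/(q^{m+1};q)_{n-m}$. Multiplying back by the factored‑out $\qbinom{n}{m}$ and simplifying both remaining Pochhammers with \eqref{eq:Kone} (namely $(q^{m+n+1};q)_{n-m}=(q;q)_{2n}/(q;q)_{n+m}$ and $(q^{m+1};q)_{n-m}=(q;q)_n/(q;q)_m$) yields precisely $\qbinom{2n}{n-m}$, completing the reduced identity and hence \eqref{eq:Narade}. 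I expect the only delicate step to be the exponent bookkeeping in the power reversal: one must track the quadratic exponents produced by two applications of \eqref{eq:Kfour} and verify that they cancel against $q^{k(k+m)}$ to leave exactly $q^{(2n+1)k}$, since it is this collapse that makes the argument of the $_2\phi_1$ coincide with $cq^N/a$ and hence makes \eqref{eq:Gauss1} applicable. As an independent sanity check, I note that the reduced identity is just the ordinary $q$-Vandermonde convolution $\qbinom{2n}{n-m}=\sum_k q^{(n-k)(n-m-k)}\qbinom{n}{k}\qbinom{n}{n-m-k}$ after the reindexing $k\mapsto n-m-k$, which turns the weight $(n-k)(n-m-k)$ into $k(k+m)$.
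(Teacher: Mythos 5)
Your proof is correct, but it takes the long way around where the paper's proof is a one-liner. Both arguments reduce \eqref{eq:Narade} to the evaluation $\sum_{k=0}^{n-m} q^{k(k+m)}\qbinom{n}{k}\qbinom{n}{k+m}=\qbinom{2n}{n-m}$ (you by cancelling the prefactor first, the paper by simplifying it at the end), and both ultimately rest on $q$-Chu--Vandermonde. The difference is in execution: the paper rewrites $\qbinom{n}{k+m}$ as $\qbinom{n}{n-m-k}$ and cites the finite $q$-Vandermonde convolution directly, so the whole proof is three lines, whereas you re-derive that convolution from scratch by passing to Pochhammer form via \eqref{eq:binom} and \eqref{eq:Kone}, reversing powers with \eqref{eq:Kfour}, and invoking the Gauss summation \eqref{eq:Gauss1}, in the style of the proof of Proposition~\ref{prop:q-Touchard}. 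Since \eqref{eq:Gauss1} \emph{is} $q$-Chu--Vandermonde in ${}_2\phi_1$ clothing, the bulk of your work amounts to a proof of the $q$-Vandermonde identity rather than a use of it; indeed your closing ``sanity check'' (the reindexing $k\mapsto n-m-k$) is essentially the paper's entire proof. Your bookkeeping is sound --- the quadratic exponents produced by the two applications of \eqref{eq:Kfour} do cancel against $q^{k(k+m)}$ to leave $q^{(2n+1)k}$, the application of \eqref{eq:Gauss1} with $N=n-m$, $a=q^{-n}$, $c=q^{m+1}$ is legitimate, and the final simplification to $\qbinom{2n}{n-m}$ checks out --- so nothing fails. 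The hypergeometric detour just buys nothing here that the symmetry $\qbinom{n}{k+m}=\qbinom{n}{n-m-k}$ plus a direct citation does not, though it would be the right template in a situation where the weight did not match the Vandermonde convolution exactly (as in the $q$-Touchard and $q$-Kreweras proofs, where such machinery is genuinely needed).
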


\begin{proof}
Immediate by application of the $q$-Vandermonde identity:
\begin{align*}
& \qbinom{2m}{m}\qbinom{n}{m}^{-1}
  \sum_{k=0}^{n -m} q^{k(k+m)} \qbinom{n}{k} \qbinom{n}{k+m}
  =  \qbinom{2m}{m}\qbinom{n}{m}^{-1}
     \sum_{k=0}^{n -m} q^{k(k+m)}  \qbinom{n}{k} \qbinom{n}{n - m - k} \\
& = \qbinom{2m}{m}\qbinom{n}{m}^{-1} \qbinom{2n}{n-m}
  = \frac{\qnum{2m}! \qnum{n-m}!}{\qnum{m}! \qnum{n}!}
    \frac{\qnum{2n}!}{\qnum{n-m}! \qnum{n+m}!}
  = T_{n,m}(q).
\end{align*}
\end{proof}

Therefore the following definition makes sense:
\begin{definition}[\cite{Wachs}]
Let $m$, $n$, and $k$ be three integers, such that $m\leq n$
and $k\leq n-m$.
The $q$-Narayana numbers of type $m$, or simply the $q$-$m$-Narayana numbers,
are
\begin{equation}
\label{eq:N_def}
N^{(m)}_{n,k}(q)
 = \qbinom{2m}{m}\qbinom{n}{m}^{-1} \qbinom{n}{k} \qbinom{n}{k+m},
\end{equation}
or, equivalently,
\begin{equation}
\label{eq:N_hdef}
N^{( m )}_{ n,k} ( q ) = \frac{( q^{m +1} ; q )_m ( q^{n -k +1} ; q )_k (
q^{n- k -m +1} ; q )_{n + k}}{ ( q^{n -m +1} ; q )_m ( q ; q )_k ( q ; q )_{n
+ k}}.
\end{equation}
\end{definition}

We shall prove in Section~\ref{sec-Na_pos}, Proposition~\ref{state:positivity}
that the numbers $N^{(m)}_{n,k}(q) $ are $q$-positive integers.

\subsection{The $q$-Kreweras Identity for $m$-Narayana numbers}

To deepen the analogy between $m$-Narayana numbers and their Catalan
analogs, we shall prove a version of the $q$-Kreweras formula satisfied by
the $m$-Narayana numbers.
Recall that the Kreweras identity~\cite{Krew} for Narayana numbers reads
in our notation (case $m=1$):

\begin{equation}
N^{(1)}_{n+k+1,k} = \sum_{s=0}^k N^{(1)}_{n,k-s} \binom{2n+ s}{s},
\end{equation}
which also reads
\begin{equation}
\frac1{n+k+1} \binom{n+k+1}{k} \binom{n+k+1}{k+1}
= \sum_{s=0}^k \frac1{n} \binom{n}{k-s} \binom{n}{k-s+1} \binom{2n+ s}{s}.
\end{equation}
We shall prove its $q$-extension to all $m$-Narayana numbers:

\begin{state}
Let $m$, $n$, and $k$ be three integers such that $m\leq n$
and $k\leq n-m$.
Then the $q$-$m$-Narayana numbers satisfy the following
generalized $q$-analog of the Kreweras identity:
\begin{align}
\label{eq:m-Kreweras}
& N^{(m)}_{n+k+m,k}(q)
= \sum_{s=0}^k q^{(k-s)(k+m-s)} N^{(m)}_{n,k-s}(q) \qbinom{2n+s}{s}
   \quad \text{or, equivalently:} \\
& N^{(m)}_{n+k+m,k}(q) 
= \sum_{s=0}^k q^{s(s+m)} N^{(m)}_{n,s}(q) \qbinom{2n+k-s}{2n}.\notag
\end{align}
\end{state}

\begin{proof}
We will need to use $q$-Pfaff–Saalsch\"{u}tz summation \cite{GR} (II.12),
which we rewrite here:
\begin{equation}
\label{eq:Saal}
\sum_{k=0}^{\infty}
 \frac{(a;q)_k (b;q)_k (q^{-N};q)_k}{(q;q)_k (c;q)_k ( d;q)_k} q^k
= \frac{\left(\frac{c}{a};q\right)_N \left(\frac{c}{b};q\right)_N}{(c;q)_N \left(\frac{c}{ab};q\right)_N}
  \quad \text{if} \quad d = \frac{a b q^{1\!-\!N}}{c}.
\end{equation}
We need to transform the right-hand side of \eqref{eq:m-Kreweras} to the form where the
Formula \eqref{eq:Saal} can be applied.
We consider a generic term in the right-hand side of
Equation~\eqref{eq:m-Kreweras} :
\[
\frac{(q^{m+1}; q)_m}{(q^{n - m +1} ;q)_m}  \qbinom{n}{k-s}
\qbinom{n}{k-s+m}\qbinom{2n +s }{s},
\]
and deal with each binomial coefficient in turn (the factor $\frac{(q^{m+1}; q)_m}{(q^{n - m +1} ;q)_m} $ is summation-index independent and can be ignored for now):
\begin{align}
\label{eq:Ka1}
&   \qbinom{n}{k-s} = \frac{(q^{n - k +s +1};q)_{k-s} }{(q;q)_{k-s}}  = \frac{(q^{n-k+1};q)_s (q^{n - k +s +1};q)_{k-s} (q^{k - s+1};q)_s}{(q^{n-k+1};q)_s (q;q)_{k-s}(q^{k - s+1};q)_s } \\
\label{eq:Ka2}
& =(-1)^s q^{\frac{s(2k-s+1)}{2}}  \frac{(q^{n-k+1};q)_{k  } }{(q;q)_k} \frac{(q^{-k};q)_s}{ (q^{n-k+1};q)_s} 
\end{align}
where
\begin{itemize}
\item in \eqref{eq:Ka1}, we have multiplied both the numerator and the
denominator by $(q^{n-k+1};q)_s $ and by $(q^{k - s+1};q)_s$;
\item in \eqref{eq:Ka2}, we have used \eqref{eq:Kone} twice to simplify the
products $(q^{n-k+1};q)_s (q^{n - k +s +1};q)_{k-s}$ and $(q;q)_{k-s}(q^{k
- s+1};q)_s $ as well as reversing the powers in $  (q^{k - s+1};q)_s $ using
\eqref{eq:Kfour};
\end{itemize}

We also rewrite
\begin{align}
\label{eq:Ka3}
&  \qbinom{n}{k-s+m} = \frac{(q^{n-k+s-m+1};q)_{k-s+m} }{(q;q)_{k - s +m}} =  \frac{(q^{n-k-m+1};q)_s(q^{n-k+s-m+1};q)_{k-s+m}(q^{k -s +m +1};q)_s}{(q^{n-k-m+1};q)_s(q;q)_{k - s +m}(q^{k -s +m +1};q)_s} \\
\label{eq:Ka4}
&  = (-1)^s q^{\frac{s(2k + 2m - s +1)}{2}}  \frac{(q^{n - k -m+1};q)_{k +m } }{(q;q)_{k+m}} \frac{(q^{-k -m};q)_s}{(q^{n -k -m+1};q)_s} 
\end{align}
where
\begin{itemize}
\item in \eqref{eq:Ka3}, we have multiplied both the numerator and the
denominator by $(q^{n-k-m+1};q)_s $ and by $(q^{k -s +m +1};q)_s$;
\item in \eqref{eq:Ka4}, we have used \eqref{eq:Kone} twice to simplify the
products $(q^{n-k-m+1};q)_s(q^{n-k+s-m+1};q)_{k-s+m}$ and $ (q;q)_{k - s
+m}(q^{k -s +m +1};q)_s$ as well as reversing the powers in $ (q^{k -s +m
+1};q)_s $ using \eqref{eq:Kfour};
\end{itemize}
and, finally,
\[
\qbinom{2n +s }{s} = \frac{(q^{2n+1};q)_s}{(q;q)_s}.
\]
Collecting all the factors of a generic term, we get:
\begin{align}
\label{eq:krew-trans}
&  \frac{(q^{m+1}; q)_m}{(q^{n - m +1} ;q)_m} \qbinom{n}{k-s} \qbinom{n}{k-s+m} \qbinom{2n +s }{s}  \\
& = \frac{(q^{m+1}; q)_m (q^{n-k+1};q)_k (q^{n - k -m+1};q)_{k +m }}{(q^{n - m +1} ;q)_m(q;q)_k (q;q)_{k+m}} q^{s(-s +2k + m +1)} \frac{(q^{-k};q)_s (q^{-k -m};q)_s (q^{2n+1};q)_s}{(q;q)_s (q^{n-k+1};q)_s(q^{n -k -m+1};q)_s}. \notag
\end{align}
Then, getting rid of extra powers of $q$ by multiplying each term of the sum by
$q^{-k(k+m)}$ and factoring out summation-index independent terms, the
sum becomes:
\begin{align}
& \sum_{s=0}^k q^{-s(-s+2k +m)}  \frac{(q^{m+1}; q)_m}{(q^{n - m +1} ;q)_m} \qbinom{n}{k-s} \qbinom{n}{k-s+m} \qbinom{2n +s }{s} \\
\label{eq:Krsum}
& = \frac{(q^{m+1}; q)_m (q^{n-k+1};q)_k (q^{n - k -m+1};q)_{k +m }}{(q^{n - m +1} ;q)_m(q;q)_k (q;q)_{k+m}} \sum_{s=0}^k \frac{(q^{-k};q)_s (q^{-k -m};q)_s (q^{2n+1};q)_s}{(q;q)_s (q^{n-k+1};q)_s(q^{n -k -m+1};q)_s} q^s,
\end{align}
and now we can apply the $q$-Pfaff–Saalsch\"{u}tz summation \eqref{eq:Saal}
with:
\begin{equation}
 a= q^{-k -m}, \quad
 b = q^{2n+1}, \quad
 c = q^{n-k+1}, \quad
 d = q^{n - k - m +1} \quad \text{as:} \quad
 \frac{a b q^{1 - k}}{c} =  q^{n - k -m +1}= d
\end{equation}
so that
\begin{equation}
\sum_{s=0}^k \frac{(q^{-k};q)_s (q^{-k -m};q)_s (q^{2n+1};q)_s}{(q;q)_s (q^{n-k+1};q)_s(q^{n -k -m+1};q)_s}  q^s   = \frac{(q^{n+m+1};q)_k (q^{-n -k};q)_k}{(q^{n-k+1};q)_k(q^{-n+m};q)_k}.
\end{equation}
We now prove that the resulting expression is indeed $N^{(m)}_{n+k+m,k}(q)$. After the summation above, the expression \eqref{eq:Krsum} takes on the form:
\begin{align}
& \frac{(q^{m+1}; q)_m (q^{n-k+1};q)_{k - m } (q^{n - k -m+1};q)_{k +m }}{(q;q)_k (q;q)_{k+m}}  \frac{(q^{n+m+1};q)_k (q^{-n -k};q)_k}{(q^{n-k+1};q)_k(q^{-n+m};q)_k} \\
\label{eq:Kr1}
& = q^{-k(k+m)} \frac{(q^{m+1}; q)_m (q^{n-k+1};q)_{k - m } (q^{n - k -m+1};q)_{k +m }}{(q;q)_k (q;q)_{k+m}}  \frac{(q^{n+m+1};q)_k (q^{n +1};q)_k}{(q^{n-k+1};q)_k(q^{n-k-m+1};q)_k}  \\
\label{eq:Kr2}
& = q^{-k(k+m)}  \frac{(q^{m+1}; q)_m (q^{n-k+1};q)_{k - m } }{(q;q)_k (q;q)_{k+m}} \frac{(q^{n+m+1};q)_k (q^{n-m+1};q)_{k+m}}{(q^{n-k+1};q)_k}  \\
\label{eq:Kr3}
& =  q^{-k(k+m)}  \frac{(q^{m+1}; q)_m  }{(q;q)_k (q;q)_{k+m}} \frac{(q^{n+m+1};q)_k (q^{n-m+1};q)_{k+m}}{(q^{n-m+1};q)_m}  \\
\label{eq:Kr4}
& =  q^{-k(k+m)}  \frac{(q^{m+1}; q)_m}{(q^{n+k+1};q)_m} \frac{ (q^{n+m+1};q)_k (q^{n+1};q)_k (q^{n+k+1};q)_m}{(q;q)_k (q;q)_{k+m} }  \\
\label{eq:Kr5}
& =  q^{-k(k+m)}  \frac{(q^{m+1}; q)_m}{(q^{n+k+1};q)_m} \frac{
(q^{n+m+1};q)_k (q^{n+1};q)_{k+m}}{(q;q)_k (q;q)_{k+m} }  q^{-k(k+m)}
N^{(m)}_{n+k+m,k}(q),
\end{align}
where 
\begin{itemize}
\item in \eqref{eq:Kr1}, we have applied \eqref{eq:Kfour} to both factors
$(q^{-n -k};q)_k$ and $(q^{-n+m};q)_k$;
\item in \eqref{eq:Kr2}, we have combined the three terms $(q^{n-k-m+1};q)_{k
+ m } , (q^{n-k-m+1};q)_k $, and $(q^{n +1};q)_k$  applying \eqref{eq:Kone};
\item in \eqref{eq:Kr3}, we have combined $(q^{n-k+1};q)_{k - m }$ and $
(q^{n-k+1};q)_k$ applying \eqref{eq:Kone};
\item in \eqref{eq:Kr4}, we have combined $ (q^{n-m+1};q)_{k+m} $ and
$(q^{n-m+1};q)_m $ applying \eqref{eq:Kone} and multiplied both the numerator
and the denominator by $(q^{n+k+1};q)_m $;
\item in \eqref{eq:Kr5}, we have combined $(q^{n+1};q)_k$ and $
(q^{n+k+1};q)_m$ using \eqref{eq:Kone}, which allowed us to recognize the
expression of $N^{(m)}_{n+k+m,k}(q)$ as in \eqref{eq:N_hdef}.
\end{itemize}
Rearranging the powers of $q$ gives Formula \eqref{eq:m-Kreweras} as
stated.
\end{proof}

\begin{cor}
Let $m$, $n$, and $k$ be three integers such that $m\leq n$ and $k\leq n-m$.
We then have the following extension of the Kreweras identity to all super
Catalan numbers:
\begin{equation}
N^{(m)}_{n+k+m,k} =  \sum_{s=0}^k N^{(m)}_{n,k-s} \binom{2n+s}{s}.
\end{equation}
\end{cor}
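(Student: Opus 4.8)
The plan is simply to specialize the $q$-Kreweras identity \eqref{eq:m-Kreweras} just established at $q=1$; the corollary is nothing more than the classical limit of that proposition, so essentially no new work is required.

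First I would record that each $q$-object appearing in \eqref{eq:m-Kreweras} degenerates to its classical counterpart as $q\to1$. One has $\lim_{q\to1}\qbinom{a}{b}=\binom{a}{b}$, and since by Definition~\eqref{eq:N_def} the $q$-$m$-Narayana number is a product of such $q$-binomials times the index-independent ratio $\qbinom{2m}{m}\qbinom{n}{m}^{-1}$, it follows that $N^{(m)}_{n,k}(1)=\binom{2m}{m}\binom{n}{m}^{-1}\binom{n}{k}\binom{n}{k+m}=N^{(m)}_{n,k}$, recovering the classical $m$-Narayana numbers.

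Next I would set $q=1$ termwise in the first form of \eqref{eq:m-Kreweras}. The power $q^{(k-s)(k+m-s)}$ becomes $1$, the factor $\qbinom{2n+s}{s}$ becomes $\binom{2n+s}{s}$, and each $N^{(m)}_{n,k-s}(q)$ becomes $N^{(m)}_{n,k-s}$, while the left-hand side $N^{(m)}_{n+k+m,k}(q)$ becomes $N^{(m)}_{n+k+m,k}$. This yields exactly the asserted identity.

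The only point requiring (minimal) justification is the legitimacy of passing to the limit $q\to1$ inside the identity, and this is where one might worry about an obstacle; but in fact it is immediate. The sum is finite, and every summand as well as the left-hand side is a rational function of $q$ regular at $q=1$: the ratio $\qbinom{2m}{m}\qbinom{n}{m}^{-1}$ has a nonzero finite limit because $\binom{n}{m}\neq0$ under the standing hypothesis $m\le n$. Hence the limit may be taken term by term, and the substance of the corollary lies entirely in the $q$-analog already proven above.
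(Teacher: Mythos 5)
Your proposal is correct and matches the paper's intent exactly: the paper states this corollary immediately after the $q$-Kreweras identity \eqref{eq:m-Kreweras} with no separate argument, precisely because it is the $q=1$ specialization you describe. Your added remark that the termwise limit is legitimate (finite sum, all factors polynomial or rational and regular at $q=1$ since $\binom{n}{m}\neq 0$ for $m\leq n$) is a fine, if routine, point of rigor.
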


\subsection{The Le Jen-Shoo identity for $m$-Narayana numbers}

Among known relations that can be interpreted as relations between the
Narayana numbers, the Le Jen-Shoo identity, (6.32) of \cite{Gould}
\begin{equation}
\label{eq:LeJenShoo}
\binom{n+k}{k}^2 = \sum_{s=0}^k \binom{k}{s}^2 \binom{n+2k-s}{2k} 
\end{equation}
is very interesting since it relates several Narayana numbers at $m=0$.

It has the following equivalent form for all $m$-Narayana numbers:

\begin{state}
Let $m$, $n$, and $k$ be integers such that $m\leq n$ and $k\leq n-m$
with the extra condition $k\geq m$.
We then have
\begin{equation}
\label{eq:Riordan}
N^{(m)}_{n,k}(q)
 = \sum_{s=0}^{k-m} q^{s(s+m)} N^{(m)}_{k,s}(q) \qbinom{n+k -s-m}{2k}.
\end{equation}
\end{state}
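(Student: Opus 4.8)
The plan is to follow the same strategy as in the proof of the $q$-Kreweras identity above: rewrite the right-hand side of \eqref{eq:Riordan} so that, after pulling out all summation-index-independent factors, the sum over $s$ becomes a terminating, balanced ${}_3\phi_2$ to which the $q$-Pfaff--Saalsch\"utz summation \eqref{eq:Saal} applies. That \eqref{eq:Saal} is the right tool is already dictated by the $q=1$ shape of the identity: setting $m=0$ and $q=1$ in \eqref{eq:Riordan} gives $\binom{n}{k}^2=\sum_{s}\binom{k}{s}^2\binom{n+k-s}{2k}$, whose term ratio is $\frac{(s-k)^2\,(s-(n-k))}{(s+1)^2\,(s-(n+k))}$, a Saalsch\"utzian ${}_3F_2$ with upper parameters $\{-k,-k,k-n\}$ and lower parameters $\{1,-(n+k)\}$ satisfying the balancing condition $1+\sum(\text{upper})=\sum(\text{lower})$.

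Concretely, I would first expand a generic summand $q^{s(s+m)}N^{(m)}_{k,s}(q)\qbinom{n+k-s-m}{2k}$ using the hook form \eqref{eq:N_hdef} of the $q$-Narayana numbers together with \eqref{eq:binom}. The three factors that carry the index $s$ in both the base and the length of a $q$-Pochhammer symbol---namely $\qbinom{k}{s}$, $\qbinom{k}{s+m}$, and $\qbinom{n+k-s-m}{2k}$---must each be rewritten so that their $s$-dependence is concentrated in ratios of the form $(q^{\ast};q)_s$ with fixed bases. This is exactly the maneuver performed in \eqref{eq:Ka1}--\eqref{eq:Ka4}: multiply numerator and denominator by suitable $q$-Pochhammer symbols, merge the resulting telescoping products with \eqref{eq:Kone}, and reverse the remaining products with \eqref{eq:Kfour}. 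Collecting the pieces, one finds the upper parameters $a=q^{-k}$, $b=q^{m+k-n}$ and $q^{-N}=q^{m-k}$ (so $N=k-m$, which is why the sum terminates at $s=k-m$ and why the hypothesis $k\geq m$ is imposed), together with the lower parameters $c=q^{m+1}$ and $d=q^{m-n-k}$; one then checks that these satisfy the balancing condition $d=ab\,q^{1-N}/c$ required by \eqref{eq:Saal}. Applying \eqref{eq:Saal} evaluates the sum in closed product form, and a final round of simplification with \eqref{eq:Kone} and \eqref{eq:Kfour} should reassemble the hook form \eqref{eq:N_hdef} of $N^{(m)}_{n,k}(q)$, completing the proof.

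I expect the main obstacle to be the bookkeeping of the powers of $q$. Each application of \eqref{eq:Kfour} contributes a factor that is quadratic in $s$, and all of these must combine precisely with the prefactor $q^{s(s+m)}$ so that the net $s$-dependent power collapses to the single linear $q^s$ demanded by the left-hand side of \eqref{eq:Saal}; getting this cancellation exactly right, rather than up to a stray $q^{\pm\binom{s}{2}}$, is the delicate part, just as it was in \eqref{eq:krew-trans}. A secondary point to verify is that the summand vanishes automatically for $s>k-m$: this follows because the factor $q^{-N}=q^{m-k}$ forces a zero in $(q^{m-k};q)_s$ once $s$ exceeds $k-m$, so that truncating or extending the range of summation is harmless and \eqref{eq:Saal} may legitimately be applied with $N=k-m$.
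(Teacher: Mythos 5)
Your proposal is correct and follows essentially the same route as the paper's own proof: rewrite the binomials so the $s$-dependence sits in ratios $(q^{\ast};q)_s$, observe that the prefactor $q^{s(s+m)}$ collapses the net power to $q^s$, and apply the $q$-Pfaff--Saalsch\"utz summation \eqref{eq:Saal} with exactly the parameters you name ($a=q^{-k}$, $b=q^{-n+k+m}$, $c=q^{m+1}$, $d=q^{-n+m-k}$, $N=k-m$), then reassemble $N^{(m)}_{n,k}(q)$ via \eqref{eq:Kone} and \eqref{eq:Kfour}. The only cosmetic difference is that the paper reassembles the product form \eqref{eq:N_def} rather than the hook form \eqref{eq:N_hdef}, which is immaterial.
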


Note that the condition $k\geq m$ is needed to make sure that the $q$-Narayana
polynomial $N^{(m)}_{k,s}$ exists.

\begin{proof}
Let us take a generic term in the sum and consider each factor separately.
First we rewrite $N^{(m)}_{k,s}(q)$ in the appropriate hypergeometric form:
\begin{align}
&  \qbinom{k}{s}
  = (-1)^s q^{\frac{s(2k-s+1)}{2}} \frac{(q^{-k};q)_s}{(q;q)_s} \notag \\
& \frac1{(q^{k - m +1} ;q)_m} \qbinom{k}{s+m}
 = \frac{(q^{m+1};q)_{k-m}}{(q;q)_k} (-1)^s q^{\frac{s(2k - 2m -s+1)}{2}}
   \frac{(q^{-k+m};q)_s}{(q^{m+1};q)_s} \notag \\
\label{eq:m_Nar_hyper}
&  \frac{(q^{m+1}; q)_m}{(q^{k - m +1} ;q)_m} \qbinom{k}{s} \qbinom{k}{s+m}
 = q^{s(2k-s-m+1)}  \frac{(q^{m+1}; q)_m  }{ (q;q)_m}
   \frac{(q^{-k};q)_s (q^{-k+m};q)_s }{(q;q)_s  (q^{m+1};q)_s}
\end{align}
and then rewrite the binomial as
\begin{equation}
 \qbinom{n+k-s-m}{2k} = q^{-2ks}
 \frac{(q^{2k+1};q)_{n-k-m}(q^{-n+k+m};q)_s}{(q;q)_{n-k-m}(q^{-n-k+m};q)_s}.
\end{equation}

Multiplying each term by $q^{s(s+m)}$ to remove unwanted powers of $q$, the
sum becomes:
\begin{align*}
& \sum_{s \geq 0} q^{s(s+m)}
  \frac{(q^{m+1}; q)_m}{(q^{k - m +1} ;q)_m} \qbinom{k}{s} \qbinom{k}{s+m}
  \qbinom{n+k-s-m}{2k}  \\
& = \sum_{s \geq 0}
    \frac{(q^{m+1}; q)_m}{(q;q)_m}
    \frac{(q^{-k};q)_s (q^{-k+m};q)_s }{(q;q)_s  (q^{m+1};q)_s}
    \frac{ (q^{2k+1};q)_{n-k-m} (q^{-n+k+m};q)_s}{ (q;q)_{n-k-m} (q^{-n-k+m};q)_s} q^{s} \\
& = \frac{(q^{m+1}; q)_m  }{ (q;q)_m}
    \frac{(q^{2k+1};q)_{n-k-m} }{(q;q)_{n-k-m}}
    \sum_{s \geq 0} \frac{(q^{-k};q)_s (q^{-k+m};q)_s }{(q;q)_s (q^{m+1};q)_s}
       \frac{ (q^{-n+k+m};q)_s}{  (q^{-n-k+m};q)_s} q^{s}
\end{align*}
and we can use the $q$-Pfaff–Saalsch\"{u}tz summation with $N=k-m$ and
\begin{equation}
a=q^{-k}, \quad
b= q^{-n+k+m}, \quad
c = q^{m+1}, \quad
d = \frac{a b}{c} q^{1-N} = q^{-k - n+ k +m- m-1 + 1 -k +m} = q^{-n +m - k}.
\end{equation}

We then get
\begin{align*}
& \sum_{s=0}^{k-m} q^{s(s+m)}
    \frac{(q^{m+1}; q)_m}{(q^{k - m +1} ;q)_m}
    \qbinom{k}{s} \qbinom{k}{s+m} \qbinom{n+k-s-m}{2k} \\
& = \frac{(q^{m+1}; q)_m  }{ (q;q)_m}
    \frac{(q^{2k+1};q)_{n-k-m} }{(q;q)_{n-k-m}}
    \frac{(q^{m+k+1};q)_{k-m} (q^{n-k+1};q)_{k-m}}{(q^{m+1};q)_{k-m} (q^{n+1};q)_{k-m}}.
\end{align*}

To unravel the right-hand side, we start by simplifying
\begin{align*}
& \frac{(q^{m+k+1};q)_{k-m} (q^{2k+1};q)_{n-k-m}}{(q;q)_{n-k-m}(q^{n+1};q)_{k-m}}
 =  \qbinom{n}{k+m}, \\
\end{align*} 
where we have used \eqref{eq:Kone} first to combine the terms in the numerator
and then divide out $(q^{n+1};q)_{k-m}$. \\
There only remains
\begin{align}
\label{eq:ch1}
& \frac{(q^{m+1}; q)_m  }{ (q;q)_m} \frac{(q^{n-k+1};q)_{k-m}}{(q^{m+1};q)_{k-m}}
= \frac{(q^{m+1}; q)_m (q^{n-k+1};q)_{k-m}} {(q;q)_k} \\
\label{eq:ch2}
& = \frac{(q^{m+1}; q)_m  (q^{n-k+1};q)_{k-m} (q^{n - m +1} ;q)_m} {(q^{n - m +1} ;q)_m (q;q)_k}  = \frac{(q^{m+1}; q)_m  }{ (q^{n - m +1} ;q)_m} \qbinom{n}{k},
\end{align}
where in \eqref{eq:ch1} we have used \eqref{eq:Kone} to combine the terms in the denominator; then, in \eqref{eq:ch2}, multiplied the numerator and the denominator by $(q^{n - m +1} ;q)_m$ and then used  \eqref{eq:Kone} to combine two terms in the numerator. \\
Putting these together, we get:
\[
\sum_{s=0}^{k-m} q^{s(s+m)}
    \frac{(q^{m+1}; q)_m}{(q^{k - m +1} ;q)_m}
    \qbinom{k}{s} \qbinom{k}{s+m} \qbinom{n+k-s-m}{2k}  = \frac{(q^{m+1}; q)_m
}{ (q^{n - m +1} ;q)_m} \qbinom{n}{k} \qbinom{n}{k+m}.
\]
\end{proof}

We could not find in the literature the classical Narayana numbers version of
identity~\eqref{eq:Riordan}:

\begin{cor}
Let $n$ and $k$ be positive integers, such that $n\geq k+1$. Then
\begin{equation}
\label{eq:claRiordan}
N_{n,k} = \sum_{s=0}^{k-1} N_{k,s}  \binom{n+k-s-1}{2k}.
\end{equation}
\end{cor}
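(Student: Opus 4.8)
The plan is to obtain \eqref{eq:claRiordan} as a direct specialization of the $q$-Le Jen-Shoo identity \eqref{eq:Riordan} for $m$-Narayana numbers, taking $m=1$ and then letting $q\to1$. Since \eqref{eq:Riordan} is an identity of polynomials in $q$, it holds in particular at $q=1$, and the only extra ingredient needed is the normalization relating the $1$-Narayana numbers to the classical Narayana numbers.

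First I would record this normalization. Evaluating the definition $N^{(m)}_{n,k}=\binom{2m}{m}\binom{n}{m}^{-1}\binom{n}{k}\binom{n}{k+m}$ at $m=1$ gives
\[
N^{(1)}_{n,k}=\binom{2}{1}\binom{n}{1}^{-1}\binom{n}{k}\binom{n}{k+1}
=\frac{2}{n}\binom{n}{k}\binom{n}{k+1}=2N_{n,k},
\]
so the classical Narayana number $N_{n,k}=\frac1n\binom{n}{k}\binom{n}{k+1}$ and $N^{(1)}_{n,k}$ differ by exactly a factor $2$, consistent with the remark in Section~\ref{sec:gamma} that $\mathcal{N}^{(1)}_{n}(1,t)=2\mathcal{N}_n(t)$.

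Next I would set $q=1$ and $m=1$ in \eqref{eq:Riordan}. At $q=1$ the factor $q^{s(s+m)}$ becomes $1$, each $q$-binomial degenerates to an ordinary binomial, the upper index $k-m$ of the sum becomes $k-1$, and $\qbinom{n+k-s-m}{2k}$ becomes $\binom{n+k-s-1}{2k}$. Applying the normalization on the left-hand side ($N^{(1)}_{n,k}=2N_{n,k}$) and on each summand ($N^{(1)}_{k,s}=2N_{k,s}$), the common factor $2$ appears on both sides and cancels, leaving precisely $N_{n,k}=\sum_{s=0}^{k-1}N_{k,s}\binom{n+k-s-1}{2k}$, which is \eqref{eq:claRiordan}.

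Finally I would verify the hypotheses. The conditions assumed in the Proposition are $m\le n$, $k\le n-m$, and the extra condition $k\ge m$; at $m=1$ these read $1\le n$, $k\le n-1$, and $k\ge1$, i.e. $n\ge k+1$ with $n,k$ positive integers, matching the statement of the corollary exactly. There is no substantial obstacle in this argument: it is a clean specialization. The one point requiring care is bookkeeping of the factor $2$ in $N^{(1)}=2N$, so that it cancels identically on both sides rather than surviving as a spurious constant; beyond that, one only checks that the degenerations $q\to1$ of the $q$-binomials and $q$-Narayana numbers behave as expected and that the summation range and index constraints carry over correctly.
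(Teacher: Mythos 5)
Your proposal is correct and is exactly the argument the paper intends: the corollary is stated as an immediate consequence of the $q$-Le Jen-Shoo identity \eqref{eq:Riordan}, obtained by setting $m=1$ and $q=1$, with the normalization $N^{(1)}_{n,k}=2N_{n,k}$ cancelling on both sides. Your verification of the index conditions ($k\geq m$ and $k\leq n-m$ becoming $n\geq k+1$, $k\geq 1$) matches the hypotheses of the corollary precisely.
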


\subsection{The positivity of the $q$-$m$-Narayana polynomials}
\label{sec-Na_pos}

Let us recall the definition of the $q$-analogs of the $m$-Narayana numbers
seen in~\eqref{eq:N_def}:
\[
N^{(m)}_{n,k}(q)
= \qbinom{2m}{m}\qbinom{n}{m}^{-1} \qbinom{n}{k} \qbinom{n}{k+m},
\]
where $n$, $m$,and $k$ are nonnegative integers such that $m\leq n$ and
$k\leq n-m$.
While the fact that $N^{(m)}_{n,k}(q)$ are monic and palindromic immediately
follows from the definition, the integrality and positivity is far from
obvious but it is true.

\begin{state}[\cite{Wachs}]
\label{state:positivity}
Let $n$, $m$, and $k$ be nonnegative integers such that $m\leq n$ and
$k\leq n-m$.
The $q$-analogs of the $m$-Narayana numbers are monic palindromic polynomials
in $q$ with positive integer coefficients.
\end{state}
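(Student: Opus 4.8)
The plan is to treat the two ``easy'' properties summarily and to concentrate all the work on integrality and positivity, which are genuinely the only substantive point.

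First I would dispose of monicity and palindromicity directly from the closed form \eqref{eq:N_def}. Clearing the inverse $q$-binomial, one has
\[
N^{(m)}_{n,k}(q)
 = \frac{\qnum{2m}!\,\qnum{n-m}!\,\qnum{n}!}
        {\qnum{m}!\,\qnum{k}!\,\qnum{n-k}!\,\qnum{k+m}!\,\qnum{n-k-m}!}.
\]
Every factor is a monic palindromic polynomial (or the reciprocal of one), so applying the reciprocity $\qbinom{a}{b}\big|_{q\to 1/q} = q^{-b(a-b)}\qbinom{a}{b}$ to each $q$-binomial in \eqref{eq:N_def}, including the inverse factor $\qbinom{n}{m}^{-1}$, shows that $N^{(m)}_{n,k}(q)$ equals its own reciprocal up to a power of $q$ whose exponent cancels, and that its top and bottom coefficients are both $1$. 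Granting the integrality established below, this is exactly the assertion that $N^{(m)}_{n,k}(q)$ is monic and palindromic.

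The real content is to show that $N^{(m)}_{n,k}(q)$ is a polynomial (integrality) with nonnegative coefficients (positivity). My plan is to realize it as a finite sum of products of quantities already known to have nonnegative integer coefficients. The cleanest vehicle is the $q$-$\gamma$-expansion \eqref{eq:Na-pos}, the $q$-analog of \eqref{eq:gamma-id}, which writes
\[
N^{(m)}_{n,k}(q)
 = \sum_{s=0}^{k} q^{s(s+m)}\,\qbinom{n-m}{2s}\,T_{s,m}(q)\,\qbinom{n-m-2s}{k-s}.
\]
Each ingredient on the right is a polynomial in $q$ with nonnegative integer coefficients: the Gaussian binomials $\qbinom{n-m}{2s}$ and $\qbinom{n-m-2s}{k-s}$ are classically so, the $q$-super Catalan numbers $T_{s,m}(q)$ are positive by Warnaar--Zudilin (recalled in the Introduction), and $q^{s(s+m)}$ is a monomial with nonnegative exponent. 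A finite sum of products of such polynomials is again of this type, so integrality and positivity drop out together, uniformly in $k$.

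Thus the argument reduces to establishing \eqref{eq:Na-pos} itself, and this is where the effort goes. I would prove it along the same template as the Touchard, Koshy and Kreweras identities: rewrite the hypergeometric form \eqref{eq:N_hdef} and the generic summand of the right-hand side in $q$-Pochhammer notation, separate even and odd powers and reverse them with \eqref{eq:Kfour} to produce a terminating balanced series, then collapse it by $q$-Chu--Vandermonde \eqref{eq:Gauss1} (or $q$-Saalsch\"utz \eqref{eq:Saal}). I expect the main obstacle to be purely bookkeeping: tracking the accumulated power of $q$ so that the prefactor emerges with a \emph{nonnegative} exponent $s(s+m)$, since it is this that upgrades ``polynomial'' to ``positive polynomial.'' The genuinely deep input, however, is external---the positivity of the $T_{s,m}(q)$---so the whole result is best understood as a reduction of the positivity of the $q$-$m$-Narayana numbers to that of the $q$-super Catalan numbers. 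As a partial alternative that avoids \eqref{eq:Na-pos}, one can induct on the first index using the Le Jen-Shoo identity \eqref{eq:Riordan}, whose right-hand side involves only the strictly smaller Narayana data $N^{(m)}_{k,s}(q)$ together with positive Gaussian binomials and monomials; the base case $m=0$ is immediate from $N^{(0)}_{n,k}(q)=\qbinom{n}{k}^2$, but the hypothesis $k\geq m$ leaves a bounded range of small $k$ (only partly recovered by the symmetry $N^{(m)}_{n,k}(q)=N^{(m)}_{n,n-m-k}(q)$) that would need separate treatment, which is why the $\gamma$-route is preferable.
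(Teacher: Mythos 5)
Your proposal follows essentially the same route as the paper: monicity and palindromicity are read off the closed form \eqref{eq:N_def}, and integrality/positivity are reduced to the expansion of $N^{(m)}_{n,k}(q)$ into $q$-binomial coefficients and $q$-super Catalan numbers, with the Warnaar--Zudilin positivity of $T_{m,s}(q)$ as the one external input. (Note the formula you quote is the $q$-identity \eqref{eq:Naq-pos}; the label \eqref{eq:Na-pos} you cite refers to its $q=1$ corollary.) Two deviations are worth flagging. First, your proposed proof of the key identity is much heavier than necessary: the paper does not split even and odd powers, reverse them via \eqref{eq:Kfour}, or invoke Saalsch\"utz; it simply expands all factors into $q$-factorials, regroups them, and applies $q$-Vandermonde once at the last step. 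Your hypergeometric bookkeeping would presumably succeed, but it is overkill for what is, after clearing factorials, a plain Vandermonde sum. Second, and more substantively, the paper states and proves \eqref{eq:Naq-pos} only for $k\leq\fl{n-m}$, and then covers the remaining range $\fl{n-m}<k\leq n-m$ by the palindromic symmetry $N^{(m)}_{n,k}(q)=N^{(m)}_{n,n-m-k}(q)$ of Equation~\eqref{eq:sym}. Your claim that positivity drops out ``uniformly in $k$'' therefore needs justification: either you prove the expansion on the full range $k\leq n-m$ (it is in fact true there, since the vanishing of $\qbinom{n-m}{2s}$ for $2s>n-m$ truncates the sum and the final $q$-Vandermonde application only needs $n-k-m\geq 0$), or you invoke the symmetry --- which you mention only in passing, in connection with the alternative Le Jen-Shoo induction, and never actually deploy. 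This is an easily repaired imprecision rather than a fatal gap, but as written the upper range of $k$ is not covered by your argument.
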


Our proof is based on the following identity, which we already
encountered in Section~\ref{sec:gamma} at $q=1$:
\begin{state}
Let $n$, $m$, and $k$ be nonnegative integers such that $m\leq n$ and
$k\leq\fl{n-m}$.
\begin{equation}
\label{eq:Naq-pos}
N^{(m)}_{n,k}(q) = \sum_{s=0}^{k} q^{s(s+m)}
    \qbinom{n-m}{2s} T_{m,s}(q) \qbinom{n - 2s-m}{k-s}.
\end{equation}

Surprisingly here, we do not impose the usual condition on the indices of the
super Catalan, \emph{i.e.}, $m$ need not be greater than or equal to $s$ in
the right-hand side.
\end{state}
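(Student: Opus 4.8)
The plan is to collapse the right-hand side to a single basic hypergeometric series in $s$ and evaluate it with the $q$-Chu--Vandermonde (Gauss) summation~\eqref{eq:Gauss1}, rather than the $q$-Pfaff--Saalsch\"utz formula used in the previous two subsections. The first move I would make is a $q$-trinomial revision to merge the two $q$-binomials carrying the index $s$:
\begin{equation*}
\qbinom{n-m}{2s}\qbinom{n-2s-m}{k-s} = \qbinom{n-m}{k+s}\qbinom{k+s}{2s},
\end{equation*}
both sides being equal to $\frac{(q;q)_{n-m}}{(q;q)_{2s}(q;q)_{k-s}(q;q)_{n-m-k-s}}$. This is purely algebraic and valid for every $s$, so it needs none of the usual size constraints on the indices.

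With this rewriting the generic summand collapses dramatically. Writing $T_{m,s}(q)$ through~\eqref{eq:super} as $\frac{(q;q)_{2m}(q;q)_{2s}}{(q;q)_m(q;q)_s(q;q)_{m+s}}$, the factor $\qbinom{k+s}{2s}$ cancels the $(q;q)_{2s}$ coming from $T_{m,s}(q)$, and then $\qbinom{n-m}{k+s}$ cancels the resulting $(q;q)_{k+s}$. After pulling out the $s$-independent constant $\frac{(q;q)_{2m}(q;q)_{n-m}}{(q;q)_m}$, what remains is the clean sum
\begin{equation*}
\sum_{s=0}^{k} \frac{q^{s(s+m)}}{(q;q)_s(q;q)_{m+s}(q;q)_{k-s}(q;q)_{n-m-k-s}}.
\end{equation*}
Because this reduction used only the rational form~\eqref{eq:super} of $T_{m,s}(q)$, which is symmetric and well defined for all nonnegative $m$ and $s$, the hypothesis $m\geq s$ is never invoked; this is precisely the phenomenon flagged in the statement.

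Next I would normalise by the $s=0$ term and convert the ratio to hypergeometric form, using~\eqref{eq:Kfour} to reverse the powers in $\frac{(q;q)_k}{(q;q)_{k-s}}=(q^{k-s+1};q)_s$ and in $\frac{(q;q)_{n-m-k}}{(q;q)_{n-m-k-s}}=(q^{n-m-k-s+1};q)_s$, producing the numerator factors $(q^{-k};q)_s$ and $(q^{-(n-m-k)};q)_s$. The bookkeeping of the powers of $q$ is the one step that needs care: the contributions $s(s+m)$, the two exponents created by the reversals, and the two copies of $q^{\binom{s}{2}}$ must be collected, and they telescope to the single exponent $s(n+1)$. The sum then reads
\begin{equation*}
\frac{1}{(q;q)_m(q;q)_k(q;q)_{n-m-k}}\,
 {}_2\phi_1\!\left(q^{-k},q^{-(n-m-k)};q^{m+1};q;q^{n+1}\right).
\end{equation*}

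Finally I would apply~\eqref{eq:Gauss1} with $N=k$, $a=q^{-(n-m-k)}$ and $c=q^{m+1}$; one checks that $\frac{cq^{N}}{a}=q^{n+1}$, so the series evaluates to $\frac{(q^{n-k+1};q)_k}{(q^{m+1};q)_k}$. Reinstating the prefactor $\frac{(q;q)_{2m}(q;q)_{n-m}}{(q;q)_m}$ and rewriting the two remaining shifted Pochhammer symbols as $\frac{(q;q)_n(q;q)_m}{(q;q)_{n-k}(q;q)_{m+k}}$, the result matches the closed form $N^{(m)}_{n,k}(q)=\frac{(q;q)_{2m}(q;q)_{n-m}(q;q)_n}{(q;q)_m(q;q)_k(q;q)_{n-k}(q;q)_{k+m}(q;q)_{n-k-m}}$ obtained by expanding~\eqref{eq:N_def}. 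The only genuine obstacle I anticipate is the exponent accounting in the hypergeometric reduction; everything else is cancellation of Pochhammer symbols together with a direct appeal to Gauss's sum.
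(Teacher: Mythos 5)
Your proof is correct: the trinomial revision, the cancellation pattern, the telescoping of the exponents to $s(n+1)$, and the application of \eqref{eq:Gauss1} with $N=k$, $a=q^{-(n-m-k)}$, $c=q^{m+1}$ (so that $cq^N/a=q^{n+1}$) all check out, and your closed form does match the expansion of \eqref{eq:N_def}. The route differs from the paper's mainly in the final evaluation. After your trinomial revision you reach exactly the same reduced sum as the paper, namely, up to powers of $1-q$, the quantity $\frac{(q;q)_{2m}(q;q)_{n-m}}{(q;q)_m}\sum_{s} q^{s(s+m)}\bigl((q;q)_s(q;q)_{m+s}(q;q)_{k-s}(q;q)_{n-m-k-s}\bigr)^{-1}$; the divergence is in how this kernel is summed. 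The paper stays at the level of $q$-factorials: it multiplies and divides by suitable factorials so as to rewrite the sum as $\qbinom{n}{k+m}\sum_s q^{s(s+m)}\qbinom{k+m}{k-s}\qbinom{n-k-m}{s}$ and then recognizes the $q$-Vandermonde convolution, giving $\qbinom{n}{k+m}\qbinom{n}{k}$ with no sign manipulations, no power reversals via \eqref{eq:Kfour}, and no ${}_2\phi_1$ notation. You instead convert the kernel to a terminating ${}_2\phi_1$ and invoke the Gauss evaluation \eqref{eq:Gauss1}; since $q$-Chu--Vandermonde and the binomial convolution are the same identity in different clothing, the two proofs rest on the same underlying fact, but yours pays for the hypergeometric detour with the exponent bookkeeping you yourself flag as the delicate step, which the paper's regrouping avoids entirely. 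Your argument does preserve the key feature emphasized in the statement: only the rational form \eqref{eq:super} of $T_{m,s}(q)$ is used, so the condition $m\geq s$ is never invoked, exactly as in the paper.
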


\begin{proof}
The calculation is simple enough so we will directly use the $q$-Vandermonde
formula at the last step of the calculation \eqref{eq:pos-last}:
\begin{align}
&  \sum_{s=0}^{k} q^{s(s+m)}
    \qbinom{n-m}{2s} \frac{\qnum{2m}! \qnum{2s}!}{\qnum{m}! \qnum{s}! \qnum{s+m}!} \qbinom{n - 2s-m}{k-s} \\
&  = \frac{\qnum{2m}!}{\qnum{m}!}
     \sum_{s=0}^{k} q^{s(s+m)}\frac{\qnum{n-m}!}{\qnum{s}! \qnum{s+m}! \qnum{k-s}! \qnum{n-k- s -m}!}\\
& = \frac{\qnum{2m}!}{\qnum{m}!}
    \sum_{s=0}^{k}  q^{s(s+m)} \frac{\qnum{n-m}! \qnum{m}!}{\qnum{n}!}
      \frac{\qnum{n}!}{\qnum{m}! \qnum{s}! \qnum{s+m}! \qnum{k-s}! \qnum{n-k- s -m}!}\\
& = \frac{\qbinom{2m}{m}}{\qbinom{n}{m}}
    \sum_{s=0}^{k} q^{s(s+m)} \frac{\qnum{n}!}{ \qnum{s}! \qnum{s+m}! \qnum{k-s}! \qnum{n-k- s -m}!}\\
& = \frac{\qbinom{2m}{m}}{\qbinom{n}{m}} \qbinom{n}{k+m}
    \sum_{s=0}^{k} q^{s(s+m)}\frac{\qnum{k+m}!}{\qnum{k-s}! \qnum{s+m}!}
      \frac{ \qnum{n - k -m}!}{ \qnum{s}! \qnum{n-k- s -m}!}\\
\label{eq:pos-last}
& = \frac{\qbinom{2m}{m}}{\qbinom{n}{m}} \qbinom{n}{k+m}
    \sum_{s=0}^{k} q^{s(s+m)}\qbinom{k+m}{k-s} \qbinom{n - k -m}{s}
  = \qbinom{2m}{m}\qbinom{n}{m}^{-1} \qbinom{n}{k+m} \qbinom{n}{k}.
\end{align}
\end{proof}

Identity~\eqref{eq:Naq-pos} proves the validity of
Proposition~\ref{state:positivity} for $k\leq \fl{n-m}$ as $m$-Narayana are
positive sums of positive integer polynomials: both $q$-binomial coefficients and the $q$-super Catalan numbers are positive polynomials (due to \cite{Warnaar2011}). To establish this fact for the upper range of $k$ up to $n-m$, we appeal to the symmetry of $m$-Narayana numbers:
\begin{equation}
\label{eq:sym}
N^{(m)}_{n,k}(q) = N^{(m)}_{n,n-m-k}(q),
\end{equation}
which is obvious from their definition (see Equation~\eqref{eq:N_def}).

Consequently, for $n$, $m$, and $k$ nonnegative integers such that $m\leq n$
and $k\leq \fl{n-m}$, we have
\begin{cor}
\begin{equation}
\label{eq:Na-pos}
N^{(m)}_{n,k}= \sum_{s=0}^{k} 
    \binom{n-m}{2s} T_{m,s} \binom{n - 2s-m}{k-s}.
\end{equation}
\end{cor}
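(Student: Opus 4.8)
The plan is to read off the corollary \eqref{eq:Na-pos} directly from the $q$-analog \eqref{eq:Naq-pos} established just above, by specializing to $q=1$; indeed \eqref{eq:Na-pos} is nothing but the term-by-term classical limit of \eqref{eq:Naq-pos}. First I would record that each quantity appearing in \eqref{eq:Naq-pos} is a rational function of $q$ that is regular at $q=1$ and limits to its classical counterpart: every $q$-binomial coefficient is a polynomial in $q$ whose value at $q=1$ is the ordinary binomial coefficient, the $q$-super Catalan numbers $T_{m,s}(q)$ are polynomials in $q$ (by~\cite{Warnaar2011}) with $T_{m,s}(1)=T_{m,s}$, and the power factor $q^{s(s+m)}$ tends to $1$. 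On the left-hand side, the $q$-$m$-Narayana number $N^{(m)}_{n,k}(q)$ of \eqref{eq:N_def} satisfies $N^{(m)}_{n,k}(1)=N^{(m)}_{n,k}$.

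Next I would observe that, since \eqref{eq:Naq-pos} is a genuine identity between rational functions of $q$ valid for all $n,m$ with $m\le n$ and all $k\le\fl{n-m}$, and since both sides are regular at $q=1$, the substitution $q=1$ is legitimate and amounts to replacing every factor by its limit above. Doing so reproduces exactly the right-hand side of \eqref{eq:Na-pos} while leaving the range of validity $k\le\fl{n-m}$ unchanged.

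There is essentially no obstacle here; the only point deserving a word of justification is that the left-hand side $N^{(m)}_{n,k}(q)$, written in \eqref{eq:N_def} with the inverse factor $\qbinom{n}{m}^{-1}$, is truly regular at $q=1$, which is immediate because $\qbinom{n}{m}$ specializes to the nonzero integer $\binom{n}{m}$. Finally I would remark that \eqref{eq:Na-pos} coincides with the identity \eqref{eq:gamma-id} announced in Section~\ref{sec:gamma} (upon noting $n-2s-m=n-m-2s$), so this single specialization discharges the claim deferred there and thereby completes the proof of the $\gamma$-positivity asserted in Proposition~\ref{gamma}.
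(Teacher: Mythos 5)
Your proposal is correct and matches the paper's own route: the paper states the corollary precisely as the $q=1$ specialization of Equation~\eqref{eq:Naq-pos}, which is exactly what you do, with the added (harmless) care of justifying regularity at $q=1$. Your closing remark that this discharges the claim deferred in Equation~\eqref{eq:gamma-id} is also exactly how the paper uses the corollary.
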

This is the identity that was needed in Section~\ref{sec:gamma},
Equation~\eqref{eq:gamma-id}.

\subsection{Conjectures on the Narayana polynomials}

We will first focus on the $q$-analog of the $m$-Narayana numbers. They are
palindromic directly thanks to their definition as a quotient of $q$-binomial
coefficients that simplify into a polynomial.
Based on computer experiments, we also think that 

\begin{conjecture}
Let $n$, $m$, and $k$ be nonnegative integers such that $m\leq n$ and $k\leq
n-m$.

Then $N^{(m)}_{n,k}(q)$ is unimodal as a polynomial in $q$.
\end{conjecture}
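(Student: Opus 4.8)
The plan is to deduce unimodality from the two facts already in hand—that $N^{(m)}_{n,k}(q)$ has nonnegative coefficients and is palindromic—by factoring it into symmetric unimodal pieces. The tool I would rely on is the classical fact that a product of palindromic, unimodal polynomials with nonnegative coefficients is again palindromic and unimodal (the centres of symmetry and the degrees simply add, because each factor decomposes into symmetric ``atoms'' $q^{i}+\cdots+q^{d-i}$ sharing one centre, and products of such atoms share the summed centre). Since each Gaussian binomial coefficient $\qbinom{a}{b}$ is palindromic and unimodal by Sylvester's theorem, and each $q$-integer $\qnum{j}$ trivially is, it would suffice to display $N^{(m)}_{n,k}(q)$ as a product of such factors. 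Writing it as the balanced ratio of $q$-factorials
\[
N^{(m)}_{n,k}(q)=\frac{(q;q)_{2m}\,(q;q)_{n-m}\,(q;q)_{n}}{(q;q)_{m}\,(q;q)_{k}\,(q;q)_{n-k}\,(q;q)_{k+m}\,(q;q)_{n-k-m}}
\]
shows it is in any event an integral product of cyclotomic polynomials, so the only question is how to group those factors into unimodal blocks.

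First I would try to dispose of the offending denominator $\qbinom{n}{m}^{-1}$ by the subset-of-a-subset identity $\qbinom{n}{m}\qbinom{n-m}{k}=\qbinom{n}{k+m}\qbinom{k+m}{m}$, which rewrites
\[
N^{(m)}_{n,k}(q)=\qbinom{2m}{m}\,\qbinom{n}{k}\,\qbinom{n-m}{k}\,\qbinom{k+m}{m}^{-1}.
\]
Now $\qbinom{2m}{m}$, $\qbinom{n}{k}$ and $\qbinom{n-m}{k}$ are each symmetric and unimodal, so the whole problem collapses onto the single residual quotient $\qbinom{2m}{m}\qbinom{k+m}{m}^{-1}$, equivalently $(q;q)_{2m}(q;q)_{k}/[(q;q)_{m}(q;q)_{k+m}]$. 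This is precisely where the difficulty concentrates: a direct count shows the balanced ratio above has three numerator $(q;q)$-factors against five in the denominator, so it cannot be split into three $q$-binomial coefficients (there is a deficit of one denominator factor), and no elementary regrouping into Gaussian coefficients and $q$-integers appears to close the argument. I expect this to be the main obstacle, and it is presumably why the statement is only a conjecture.

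To get past this I would pursue one of two structural routes. The representation-theoretic route is to realise $N^{(m)}_{n,k}(q)$ as the Hilbert series of a finite-dimensional graded vector space carrying a hard-Lefschetz $\mathfrak{sl}_2$-action—for example a weight space inside a tensor product of irreducible $GL_2$-modules, or a principal-specialisation expression of the shape $s_{\lambda}(1,q,q^2,\dots)$—since the graded dimensions of any such space are automatically symmetric and unimodal; the work is then to pin down the module whose character specialises to the ratio above. The combinatorial route, in the spirit of Section~\ref{sec:gamma}, is to present $N^{(m)}_{n,k}(q)$ as the rank-generating function of a ranked set (weighted lattice paths, or $m$-decorated type~B noncrossing partitions) and to construct an explicit symmetric chain decomposition, or verify the normalised-matching property, following the template of O'Hara's combinatorial proof of the unimodality of $\qbinom{a}{b}$. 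In both routes the hard part is uniformity: building the Lefschetz structure, respectively the chain decomposition, simultaneously for all triples $(n,m,k)$ rather than case by case.

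As base cases and sanity checks, $m=0$ is immediate since $N^{(0)}_{n,k}(q)=\qbinom{n}{k}^{2}$ is already a product of two symmetric unimodal polynomials, and $m=1$ is the classical $q$-Narayana case. One is tempted to bootstrap from these through the identity~\eqref{eq:Naq-pos}, the $q$-Kreweras recursion, or~\eqref{eq:Riordan}; I would caution, however, that unimodality is not preserved under the nonnegative combinations occurring there—sums of unimodal polynomials with differently placed supports need not be unimodal—so any induction would have to carry a stronger log-concavity-type invariant in order to propagate.
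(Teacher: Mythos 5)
There is nothing in the paper to compare your attempt against: the statement you were given is stated by the authors as a \emph{conjecture}, supported only by computer experiments, and no proof of it appears anywhere in the paper. So the first thing to say is that you have not been asked to reproduce a hidden argument --- none exists. The second thing to say is that your proposal is not a proof either, as you yourself concede. The algebra you do carry out is correct: the balanced-ratio form of $N^{(m)}_{n,k}(q)$, and the rewriting
\begin{equation*}
N^{(m)}_{n,k}(q)=\qbinom{2m}{m}\,\qbinom{n}{k}\,\qbinom{n-m}{k}\,\qbinom{k+m}{m}^{-1}
\end{equation*}
via the subset-of-a-subset identity, both check out. But the reduction ``the whole problem collapses onto $\qbinom{2m}{m}\qbinom{k+m}{m}^{-1}$'' is not a valid reduction, because that residual factor is in general not even a polynomial (already for $m=1$, $k=2$ it is $(1+q)/(1+q+q^{2})$), so the product-of-palindromic-unimodal-polynomials lemma has no grouping to which it can be applied; unimodality of a product tells you nothing when one factor has negative or non-terminating coefficients. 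You correctly identify this as the obstruction, and from that point on the proposal consists of two research programmes (a hard-Lefschetz $\mathfrak{sl}_2$ structure, or a symmetric chain decomposition \`a la O'Hara), neither of which is constructed, even in the base cases beyond $m=0$.

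Two of your side remarks deserve emphasis because they are the genuinely useful content. First, the warning that unimodality is \emph{not} preserved under the nonnegative-coefficient sums appearing in \eqref{eq:Naq-pos}, \eqref{eq:m-Kreweras} and \eqref{eq:Riordan} is exactly right, and it explains why the paper's own positivity argument (Proposition~\ref{state:positivity} via \eqref{eq:Naq-pos}) delivers positivity and palindromicity but stops short of unimodality --- this is presumably why the authors could only conjecture the statement. Second, the observation that $m=0$ is immediate, since $N^{(0)}_{n,k}(q)=\qbinom{n}{k}^{2}$ is a product of two palindromic unimodal polynomials with nonnegative coefficients, is correct and worth recording; but note that even $m=1$ is not a ``classical'' known case in the form needed here, since the prefactor $(1+q)/\qnum{n}$ obstructs the same factorization trick. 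In short: the conjecture remains open, your attempt is an honest and accurate map of why the easy routes fail, but it contains no completed argument.
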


Note that the $N^{(m)}_{n,k}(q)$ are not $\gamma$ positive. They even seem to
be quite the opposite: they seem to be alternating in sign.

Also based on computer experiments, we also conjecture that
\begin{conjecture}
Let $n$ and $m$ be nonnegative integers such that $m\leq n$.

Then the sequence of polynomials $\{N^{(m)}_{n,k}(q)\}_{0\leq k\leq n-m} $ is
$q$-log-concave for fixed $n$ and $m$,
where a sequence $\{a_n(q)\}_{0\leq k\leq N} $ is $q$-log-concave if
the sequence $a_k(q)^2 - a_{k-1}(q) a_{k+1}(q)$ has positive coefficients
for all $1\leq k\leq N-1$.
\end{conjecture}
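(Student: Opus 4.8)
The plan is to reduce $q$-log-concavity of $\{N^{(m)}_{n,k}(q)\}_k$ to the classical $q$-log-concavity of the Gaussian binomial coefficients $\qbinom{n}{k}$ in their lower index, which is known (Butler; injective proofs were later given by Krattenthaler and by Sagan). Writing $c=\qbinom{2m}{m}\qbinom{n}{m}^{-1}$ for the factor in \eqref{eq:N_def} that is independent of $k$, and setting $P_k=\qbinom{n}{k}\qbinom{n}{k+m}$, the object to be shown to have nonnegative coefficients is
\[
E_k := N^{(m)}_{n,k}(q)^2 - N^{(m)}_{n,k-1}(q)\,N^{(m)}_{n,k+1}(q) = c^2\bigl(P_k^2 - P_{k-1}P_{k+1}\bigr).
\]
First I would expose the product structure of $P_k$ through the elementary identity, valid for any two sequences $(f_k)$ and $(g_k)$,
\[
(f_kg_k)^2 - (f_{k-1}g_{k-1})(f_{k+1}g_{k+1}) = g_k^2\bigl(f_k^2-f_{k-1}f_{k+1}\bigr) + f_{k-1}f_{k+1}\bigl(g_k^2-g_{k-1}g_{k+1}\bigr),
\]
taken with $f_k=\qbinom{n}{k}$ and $g_k=\qbinom{n}{k+m}$. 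Since the Gaussian binomials have nonnegative coefficients and are $q$-log-concave in their lower index, each of the four factors on the right is a polynomial with nonnegative coefficients, and hence $P_k^2-P_{k-1}P_{k+1}$ has nonnegative coefficients as well.

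The remaining, and genuinely hard, step is to push this positivity through the prefactor $c$. The subtlety is that $c=\qbinom{2m}{m}\qbinom{n}{m}^{-1}$ is only a rational function of $q$: although $E_k$ itself is a bona fide polynomial (a difference of products of the polynomials $N^{(m)}_{n,k}(q)$ guaranteed by Proposition~\ref{state:positivity}), multiplying a polynomial with nonnegative coefficients by $c^2$ does not preserve nonnegativity of coefficients. What the previous paragraph actually establishes is that
\[
\qbinom{n}{m}^2 E_k = \qbinom{2m}{m}^2\bigl(P_k^2-P_{k-1}P_{k+1}\bigr)
\]
has nonnegative coefficients, which is equivalent only to the weaker statement that $E_k(q)\ge 0$ for every real $q>0$, i.e. numerical log-concavity of the values. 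Dividing by the polynomial $\qbinom{n}{m}^2$ can introduce negative coefficients, so upgrading numerical log-concavity to coefficientwise log-concavity is precisely where a new idea is required.

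To overcome this I would abandon the division and instead seek a model in which $N^{(m)}_{n,k}(q)$ appears as a manifestly positive generating function and $E_k$ is realized as a weighted enumeration over a combinatorial family. The natural vehicle is a Lindstr\"om--Gessel--Viennot reading: starting from the reformulation
\[
N^{(m)}_{n,k}(q) = \frac{\qbinom{2m}{m}}{\qbinom{k+m}{m}}\,\qbinom{n}{k}\,\qbinom{n-m}{k},
\]
obtained from the subset-of-a-subset identity $\qbinom{n}{k+m}\qbinom{k+m}{m}=\qbinom{n}{m}\qbinom{n-m}{k}$, one would interpret each surviving $q$-binomial as an area generating function for lattice paths in a rectangle and express $E_k$ as a signed count of pairs of path systems, collapsing it to a series with nonnegative coefficients by an area-preserving, sign-reversing involution in the spirit of Krattenthaler's and Sagan's injective proofs of $q$-log-concavity. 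The crux will be to build this involution so that it is compatible with the $k$-dependent denominator $\qbinom{k+m}{m}$; reconciling the involution with that extra factor --- equivalently, finding the right combinatorial model that keeps $N^{(m)}_{n,k}(q)$ integral and positive at every stage --- is the step I expect to be the main obstacle, and is presumably why the statement is recorded here only as a conjecture.
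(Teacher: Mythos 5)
The statement you are attacking is recorded in the paper only as a \emph{conjecture}, supported by computer experiments; the paper contains no proof, so there is nothing to compare your attempt against except the statement itself, and your attempt---as you yourself concede---does not close it. What you do establish correctly is the coefficientwise log-concavity of the products $P_k=\qbinom{n}{k}\qbinom{n}{k+m}$: your splitting identity
\[
(f_kg_k)^2 - (f_{k-1}g_{k-1})(f_{k+1}g_{k+1}) = g_k^2\bigl(f_k^2-f_{k-1}f_{k+1}\bigr) + f_{k-1}f_{k+1}\bigl(g_k^2-g_{k-1}g_{k+1}\bigr)
\]
is valid, and together with the known $q$-log-concavity of the Gaussian binomials in their lower index it shows that $\qbinom{n}{m}^2\bigl(N^{(m)}_{n,k}(q)^2 - N^{(m)}_{n,k-1}(q)\,N^{(m)}_{n,k+1}(q)\bigr)$ has nonnegative coefficients. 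That is a genuine partial result: it yields numerical log-concavity of the values $N^{(m)}_{n,k}(q)$ for every real $q>0$, in particular the classical $q=1$ statement, and it isolates exactly where the difficulty lives.

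But the gap you flag is the crux, and it is real: coefficientwise nonnegativity is not preserved under division by a polynomial with nonnegative coefficients (compare $(1+q^3)/(1+q)=1-q+q^2$), so nothing transfers from $\qbinom{n}{m}^2E_k$ to $E_k$ itself. Your proposed repair---rewriting $N^{(m)}_{n,k}(q)=\qbinom{2m}{m}\qbinom{k+m}{m}^{-1}\qbinom{n}{k}\qbinom{n-m}{k}$ via the subset-of-a-subset identity and hunting for an area-preserving, sign-reversing involution in a lattice-path model---is a plan, not an argument. Worse, after this rewriting the denominator $\qbinom{k+m}{m}$ depends on $k$, so the factor-out-a-constant trick that handled $c$ is no longer even available, and none of the existing injective proofs of $q$-log-concavity (Butler, Krattenthaler, Sagan) is known to accommodate such a $k$-dependent rational prefactor. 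So your submission amounts to an honest reduction plus a research programme; it neither proves the conjecture nor contradicts anything in the paper, which is consistent with the authors' own decision to leave the statement open.
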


\medskip
Returning to the $m$-Narayana polynomials \eqref{def:mNara}, let us recall
that a sequence of polynomials $\{p_n(t) \}_{n \geq 0} $ is $t$-log-convex if
for any $k\geq 1$, the difference
\[
p_{k+1}(t) p_{k-1}(t) - p_k^2(t)
\]
has nonnegative coefficients as a polynomial in $t$.

In both cases $m=0$ and $m=1$ (see~\cite{ChenB2010}, \cite{ChenA2010}), it is known that the
Narayana polynomials form a $t$-log-convex sequence.
We conjecture that this pattern is true for all $m$:

\begin{conjecture}
Let $m$ be a nonnegative integer.

The sequence of polynomials $\mathcal{N}^{(m)}_{n}(1,t)$ starting at $n=m$ forms a
$t$-log-convex sequence.
\end{conjecture}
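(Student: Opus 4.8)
The plan is to isolate the single mechanism behind the already-settled cases $m=0$ and $m=1$ (the central-binomial and Narayana log-convexity results of \cite{ChenB2010} and \cite{ChenA2010}) and run it uniformly in $m$. Write $P_n(t):=\mathcal{N}^{(m)}_n(1,t)$ and $Q_n(t):=\sum_{k=0}^{n-m}\binom{n}{k}\binom{n}{k+m}\,t^k$, so that $P_n=\binom{2m}{m}\binom{n}{m}^{-1}Q_n$ by \eqref{eq:N_def}. First I would strip off the index-dependent scalar: since $\binom{n}{m}^2/\bigl(\binom{n+1}{m}\binom{n-1}{m}\bigr)=n(n+1-m)/\bigl((n+1)(n-m)\bigr)$, $t$-log-convexity of $\{P_n\}_{n\ge m}$ is \emph{equivalent} to the coefficientwise inequality
\[
\frac{n(n+1-m)}{(n+1)(n-m)}\,Q_{n+1}(t)\,Q_{n-1}(t)-Q_n(t)^2\ \succeq\ 0 .
\]
The numerator minus the denominator of the scalar equals $m$, so the prefactor is $\ge 1$ and already pushes this Tur\'an difference in the favorable direction; all the remaining work lives in the $t$-coefficients.

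Before committing to the general argument I would run the $m=1$ consistency check, which reveals that the scalar is tuned exactly right rather than being an inequality to be wasted. There one has $Q_n=n\,\mathcal{N}_n(t)$ for the ordinary Narayana polynomial $\mathcal{N}_n$, whence
\[
\frac{n^2}{n^2-1}\,Q_{n+1}Q_{n-1}-Q_n^2=n^2\bigl(\mathcal{N}_{n+1}\mathcal{N}_{n-1}-\mathcal{N}_n^2\bigr),
\]
which is coefficientwise nonnegative precisely by the classical log-convexity of $\{\mathcal{N}_n\}$. Note that the \emph{unscaled} difference $Q_{n+1}Q_{n-1}-Q_n^2=(n^2-1)\bigl(\mathcal{N}_{n+1}\mathcal{N}_{n-1}-\mathcal{N}_n^2\bigr)-\mathcal{N}_n^2$ carries a genuinely negative correction, so the prefactor $\binom{n}{m}^{-1}$ is an essential part of the statement and must be kept throughout.

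For general $m$ I would produce a three-term recurrence $P_{n+1}(t)=A_n(t)P_n(t)+B_n(t)P_{n-1}(t)$ with coefficients rational in $n$ and polynomial in $t$: because $\binom{n}{k}\binom{n}{k+m}$ is hypergeometric in $n$, the sequence $\{Q_n\}$ is holonomic and such a recurrence is obtained by creative telescoping of the summand and then transported to $P_n$ through the scalar above, with the $\gamma$-expansion \eqref{eq:mSU} furnishing an independent derivation and cross-check. With the recurrence in hand I would invoke the standard criteria for $t$-log-convexity of sequences defined by three-term recurrences developed by Liu and Wang and sharpened by Chen, Wang, and Yang and by Zhu, reducing the claim to a finite list of sign and monotonicity conditions on $A_n(t)$, $B_n(t)$ and their associated auxiliary polynomials; a parallel route is to prove that the production matrix of the array $\bigl[N^{(m)}_{n,k}\bigr]$ is totally positive, which is known to force $t$-log-convexity of the row-generating polynomials.

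The hard part will be discharging these conditions \emph{uniformly in $m$ and $n$}. The prefactor $\binom{n}{m}^{-1}$ makes the recurrence coefficients rational rather than polynomial in $n$, so the log-convexity criteria do not apply off the shelf, and — as the $m=1$ computation above shows — one cannot separate the scalar from the polynomial part, since neither factor is log-convex on its own. I expect the most promising route is an induction on $n$ that uses the recurrence together with the already-established $\gamma$-positivity \eqref{eq:mSU} to keep every intermediate quantity manifestly nonnegative, thereby converting the delicate cancellation between the scalar and $Q_{n+1}Q_{n-1}-Q_n^2$ into a sum of visibly positive contributions.
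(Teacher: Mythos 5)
This statement is one of the paper's \emph{conjectures}: the authors offer no proof, and the only settled cases are $m=0$ and $m=1$, which they cite from the literature (\cite{ChenB2010}, \cite{ChenA2010}). So there is no proof in the paper to compare against, and your proposal must stand on its own as a proof attempt. It does not. Your preliminary reductions are correct and worth keeping: the scalar computation
\begin{equation*}
\frac{\binom{n}{m}^2}{\binom{n+1}{m}\binom{n-1}{m}}=\frac{n(n+1-m)}{(n+1)(n-m)},
\end{equation*}
the resulting equivalent coefficientwise inequality for $Q_n(t)=\sum_k\binom{n}{k}\binom{n}{k+m}t^k$, and the $m=1$ consistency check (where $Q_n=n\,\mathcal{N}_n$ makes the scaled Tur\'an difference collapse to $n^2(\mathcal{N}_{n+1}\mathcal{N}_{n-1}-\mathcal{N}_n^2)$, while the unscaled one has constant term $-1$) are all verifiably right, and they correctly identify that the prefactor $\binom{n}{m}^{-1}$ cannot be discarded.

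The genuine gap is that everything after this reduction is a research program, not an argument. The three-term recurrence for $Q_n$ is asserted to exist by holonomicity but never computed; the log-convexity criteria of Liu--Wang, Chen--Wang--Yang, and Zhu are invoked, but you yourself concede they ``do not apply off the shelf'' because the transported recurrence for $P_n$ has coefficients rational (not polynomial) in $n$; and the closing suggestion of ``an induction on $n$ that uses the recurrence together with $\gamma$-positivity'' is a statement of intent with no inductive step, no induction hypothesis strong enough to close on itself, and no mechanism exhibited for absorbing the negative correction your own $m=1$ computation shows is present. In short, the case that is actually open --- general $m$ --- is exactly the part of your proposal that contains no proof. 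A correct write-up would either carry out one of these routes in full (explicit recurrence, verified hypotheses of a named criterion, or a proved total-positivity statement for the production matrix of $\bigl[N^{(m)}_{n,k}\bigr]$), or honestly present the reduction and the $m=1$ check as partial progress toward what remains a conjecture.
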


\section{Acknowledgements}

Both authors would like to thank Jean-Christophe Novelli for a very thorough
reading of the manuscript and many useful suggestions and corrections. In
addition, the second author thanks Christian Krattenthaler for sharing his
unpublished notes. The authors are grateful to the referee for corrections, many insightful comments and, in particular, for suggestions on streamlining the proof of Proposition~\ref{prop:q-Touchard}.

\bibliographystyle{alpha}
\bibliography{SuperCatalan.bib}

\begin{thebibliography}{CTWY10}

\bibitem[AG14]{Allen2014}
Emily Allen and Irina Gheorghiciuc.
\newblock A weighted interpretation for the super {C}atalan numbers.
\newblock {\em Journal of Integer Sequences}, 17.3, 2014.

\bibitem[And74]{Andrews1974}
George~E. Andrews.
\newblock Applications of basic hypergeometric functions.
\newblock {\em SIAM Review}, 16(4):441--484, 1974.

\bibitem[And10]{Andrews2010}
George~E. Andrews.
\newblock q-{C}atalan {I}dentities.
\newblock In Krishnaswami Alladi, John~R. Klauder, and Calyampudi~R. Rao,
  editors, {\em The Legacy of Alladi Ramakrishnan in the Mathematical
  Sciences}, pages 183--190. Springer New York, New York, NY, 2010.

\bibitem[BP14]{BlancoPetersen}
Sa{\'u}l~A Blanco and T~Kyle Petersen.
\newblock Counting {D}yck paths by area and rank.
\newblock {\em Annals of Combinatorics}, 18:171--197, 2014.

\bibitem[Cat74]{Catalan}
Eug\`ene Catalan.
\newblock Nouvelles annales de math\'ematiques.
\newblock {\em Journal des Candidats aux \'Ecole Polytechnique et Normales},
  13(2):207, 1874.

\bibitem[CTWY10]{ChenB2010}
William~Y.C. Chen, Robert~L. Tang, Larry~X.W. Wang, and Arthur~L.B. Yang.
\newblock The q-log-convexity of the {N}arayana polynomials of type {B}.
\newblock {\em Advances in Applied Mathematics}, 44(2):85--110, 2010.

\bibitem[CW12]{Chen-Wang}
Xin Chen and Jane Wang.
\newblock The super {C}atalan numbers $s(m,m+s)$ for $s \leq 4$.
\newblock {\em arXiv:1208.4196}, 2012.

\bibitem[CWY10]{ChenA2010}
William Chen, Xingwei Wang, and Arthur Yang.
\newblock Schur {P}ositivity and the $q$-log-convexity of the {N}arayana
  {P}olynomials.
\newblock {\em Journal of Algebraic Combinatorics}, 32:303–338, 2010.

\bibitem[FS70]{FS}
Dominique Foata and Marcel-Paul Schützenberger.
\newblock Théorie {G}éométrique des {P}olynômes {E}ulériens.
\newblock {\em Lecture Notes in Mathematics}, 138, 1970.

\bibitem[Ges92]{Gessel92}
Ira~M. Gessel.
\newblock Super ballot numbers.
\newblock {\em Journal of Symbolic Computation}, 14(2):179--194, 1992.

\bibitem[Gou77]{Gould}
Henry~W. Gould.
\newblock Generalization of a {F}ormula of {T}ouchard for {C}atalan {N}umbers.
\newblock {\em Journal of Combinatorial Theory, Series A}, 23:351--353, 1977.

\bibitem[GR04]{GR}
George Gasper and Mizan Rahman.
\newblock {\em Basic hypergeometric series}, volume~96 of {\em Encyclopedia of
  Mathematics and its Applications}.
\newblock Cambridge University Press, second edition, 2004.

\bibitem[GX04]{Gessel}
Ira~M. Gessel and Guoce Xin.
\newblock A {C}ombinatorial interpretation of the numbers ${6 (2n)!}{n! (n +
  2)!}$.
\newblock {\em Journal of Integer Sequences}, 8(2), 2004.

\bibitem[Joh20]{Johnson}
Warren Johnson.
\newblock {\em An Introduction to $q$-analysis}.
\newblock American Mathematical Society, 2020.

\bibitem[Kos08]{Koshy}
Thomas Koshy.
\newblock {\em Catalan numbers with applications}.
\newblock Oxford University Press, 2008.

\bibitem[Kra]{K}
Christian Krattenthaler.
\newblock Personal communication.

\bibitem[Kre67]{Krew}
Germain Kreweras.
\newblock Traitement simultané du “problème de {Y}oung” et du
  “problème de {S}imon {N}ewcomb”.
\newblock {\em Cahiers du Bur. Univ. de Rech. Opér.}, 10:23–31, 1967.

\bibitem[KW14]{Wachs}
Christian Krattenthaler and Michelle Wachs.
\newblock {IMA} talk: {O}n q-gamma-positivity, 2014.
\newblock \url{https://www.ima.umn.edu/2014-2015/W11.10-14.14/22141}.

\bibitem[Lim24]{Limanta}
Kevin Limata.
\newblock An algebraic interpretation of the super {C}atalan numbers.
\newblock {\em Bulletin of the Australian Mathematical Society},
  109(3):498–506, 2024.

\bibitem[Mac98]{Mac}
Ian~G. Macdonald.
\newblock {\em Symmetric Functions and {H}all Polynomials}.
\newblock Clarendon Press, 1998.

\bibitem[Rei97]{Reiner_1997}
Victor Reiner.
\newblock Non-crossing partitions for classical reflection groups.
\newblock {\em Discrete Mathematics}, 177(1-3):195--222, dec 1997.

\bibitem[Sch03]{Scha}
Gilles Schaeffer.
\newblock A combinatorial interpretation of super-{C}atalan numbers of order
  two.
\newblock {\em unpublished manuscript}, 2003.

\bibitem[SU91]{Simion}
Rodica Simion and Daniel Ullman.
\newblock On the structure of the lattice of noncrossing partitions.
\newblock {\em Discrete Mathematics}, 98(3):193--206, 1991.

\bibitem[WZ11]{Warnaar2011}
Ole Warnaar and Wadim Zudilin.
\newblock A q-rious positivity.
\newblock {\em Aequationes mathematicae}, 81(1):177--183, 2011.

\end{thebibliography}

\end{document}